\def\Bbb{\mathbb}
\def\eea{\end{eqnarray*}}
\newtheorem{main}{Theorem}
\newtheorem{defn}{Definition}
\newtheorem{thm}{Theorem}
\newtheorem{prop}[thm]{Proposition}
\newtheorem{cor}[thm]{Corollary}
\newtheorem{lem}[thm]{Lemma}
\newtheorem{Pro}[thm]{Problem}
\newenvironment{proof}{\medskip \noindent
{\bf Proof.}}{\hfill \rule{.5em}{1em}
\\}
\newenvironment{rmk}{\mbox{ }\\{\bf  Remark}\mbox{ }}{
\hfill $\Box$\mbox{}\bigskip}
\begin{document}

\title{The Normalized Ricci Flow on Four-Manifolds and Exotic Smooth Structures}

\author{Masashi Ishida} 

\date{}

\maketitle

\begin{abstract}
In this article, we shall investigate the relationship between the existence or non-existence of non-singular solutions to the normalized Ricci flow and smooth structures on closed 4-manifolds, where non-singular solutions to the normalized Ricci flow are solutions which exist for all time $t \in [0, \infty)$ with uniformly bounded sectional curvature. In dimension 4, there exist many compact topological manifolds admitting distinct smooth structures, i.e., exotic smooth structures. Interestingly, in this article, the difference between existence and non-existence of non-singular solutions to the normalized Ricci flow on 4-manifolds turns out to strictly depend on the choice of smooth structure. In fact, we shall prove that, for every natural number $\ell$, there exists a compact topological 4-manifold $X_{\ell}$ which admits smooth structures for which non-singular solutions of the normalized Ricci flow exist, but also admits smooth structures for which {\it no} non-singular solution of the normalized Ricci flow exists. Hence, in dimension 4, smooth structures become definite obstructions to the existence of non-singular solutions to the normalized Ricci flow. 
\end{abstract}


\section{Introduction}\label{intro}

Let $X$ be a closed oriented Riemannian manifold of dimension $n \geq 3$. {The Ricci flow} on $X$ is the following evolution equation:
\begin{eqnarray*}
 \frac{\partial }{\partial t}{g}=-2{Ric}_{g}, 
\end{eqnarray*}
where ${Ric}_{g}$ is the Ricci curvature of the evolving Riemannian metric $g$. The Ricci flow was firstly introduced in the celebrated work \cite{ha-0} of Hamilton for producing the constant positive sectional curvature metrics on 3-manifolds. Since the above equation does not preserve volume in general, one often considers the normalized Ricci flow on $X$:
\begin{eqnarray*}\label{Ricci}
 \frac{\partial }{\partial t}{g}=-2{Ric}_{g} + \frac{2}{n}\overline{s}_{g} {g}, 
\end{eqnarray*}
where $\overline{s}_{g}:={{\int}_{X} {s}_{g} d{\mu}_{g}}/{vol_{{g}}}$ and ${s}_{g}$ denotes the scalar curvature of the evolving Riemannian metric $g$, $vol_{g}:={\int}_{X}d{\mu}_{g}$ and $d{\mu}_{g}$ is the volume measure with respect to $g$. A one-parameter family of metric $\{g(t)\}$, where $t \in [0, T)$ for some $0<T\leq \infty$, is called a solution to the normalized Ricci flow if this satisfies the above equation at all $x \in X$ and $t \in [0, T)$. It is known that the normalized flow is equivalent to the unnormalized flow by reparametrizing in time $t$ and scaling the metric in space by a function of $t$. The volume of the solution metric to the normalized Ricci flow is constant in time. \par
The key point of an approach for understanding the topology of a given manifold via the normalized Ricci flow is to get the long-time behavior of the solution. Recall that a solution $\{g(t)\}$ to the normalized Ricci flow on a time interval $[0, T)$ is said to be maximal if it cannot be extended past time $T$. Let us also recall the following definition firstly introduced by Hamilton \cite{ha-1, c-c}:
\begin{defn}\label{non-sin}
A maximal solution $\{g(t)\}$, $t \in [0, T)$, to the normalized Ricci flow on $X$ is called non-singular if $T=\infty$ and the Riemannian curvature tensor $Rm_{g(t)}$ of $g(t)$ satisfies 
$$
\sup_{X \times [0, T)}|Rm_{g(t)}| < \infty. 
$$
\end{defn} 
As a pioneer work, Hamilton \cite{ha-0} proved that, in dimesion 3, there exists a unique non-singular solution to the normalized Ricci flow if the initial metric is positive Ricci curvature. Moreover, Hamilton \cite{ha-1} classified non-singular solutions to the normalized Ricci flow on 3-manifolds and the work was very important for understanding long-time behaivor of solutions of the Ricci flow on 3-manifolds. On the other hand, many authors studied the properties of non-singular solutions in higer dimensions. For example, Hamilton \cite{ha-2} proved that, for any closed oriented Riemannian 4-manifold with constant positive curvature operator, there is a unique non-singular solution to the normalized flow which converges to a smooth Riemannian metric of positive sectional curvature. On the other hand, it is known that the solution on a 4-manifold with positive isotropic curvature definitely becomes singular \cite{ha-3, ha-4}. See also a recent very nice work of Chen and Zhu \cite{c-z} on Ricci flow with surgery on 4-manifolds with positive isotropic curvature inspired by the work of Hamilton \cite{ha-3} and the celebrated work of Perelman \cite{p-1, p-2, p-3, c-c, lott, m-t}. There is also an interesting work concerning Ricci flow on homogeneous 4-manifolds due to Isenberg, Jackson and Lu \cite{isen}. See also Lott's work \cite{lott-r} concerning with the long-time behavior of Type-III Ricci flow solutions on homogeneous manifolds. However, the existence and non-existence of non-singular solutions to the normalized Ricci flow in higher dimensions $n \geq 4$ are still mysterious in general. The main purpose of this article is to study, from the gauge theoretic point of view, this problem in case of dimension 4 and point out that the difference between existence and non-existence of non-singular solutions to the normalized Ricci flow strictly depend on one's choice of smooth structure. The main result of the present article is Theorem \ref{main-A} stated below. \par
In \cite{fz-1}, Fang, Zhang and Zhang also studied the properties of non-singular solutions to the normalized Ricci flow in higher dimensions. Inspired by their work, we shall introduce the following definition:
\begin{defn}\label{bs}
A maximal solution $\{g(t)\}$, $t \in [0, T)$, to the normalized Ricci flow  on $X$ is called quasi-non-singular if $T=\infty$ and the scalar curvature $s_{g(t)}$ of $g(t)$ satisfies 
$$
\sup_{X \times [0, T)}|{s}_{g(t)}| < \infty. 
$$
\end{defn}
Of course, the condition of Definition \ref{bs} is weaker than that of Definition \ref{non-sin}. Namely, any non-singular solution is quasi-non-singular, but the converse is not true in general. In dimension 4, the authors of \cite{fz-1} observed, among others, that any closed oriented smooth 4-manifold $X$ must satisfy the following {\it topological} constraint on the Euler characteritic $\chi(X)$ and signature $\tau(X)$ of $X$:
\begin{eqnarray}\label{FZZ}
2 \chi(X) \geq 3|\tau(X)|
\end{eqnarray}
if there is a quasi-non-singular solution to the normalized Ricci flow on $X$ and, moreover, if the solution satisfies 
\begin{eqnarray}\label{FZZ-s}
\hat{s}_{g(t)} \leq -c <0, 
\end{eqnarray}
where the constant $c$ is independent of $t$ and define as $\hat{s}_{g} := \min_{x \in X}{s}_{g}(x)$ for a given Riemannian metric $g$. In this article, we shall call the inequality (\ref{FZZ}) the {\it Fang-Zhang-Zhang inequality} (or, for brevity, FZZ inequality) for the normalized Ricci flow and we shall also call $2 \chi(X) > 3|\tau(X)|$ the {\it strict} FZZ inequality for the normalized Ricci flow. The FZZ inequality gives us, under the condition (\ref{FZZ-s}), the only known topological obstruction to the existence of quasi-non-singular solutions to the normalized Ricci flow. It is also known that any Einstein 4-manifold $X$ must satisfy the same bound $2 \chi(X) \geq 3|\tau(X)|$ which is so called Hitchin-Thorpe inequality \cite{thor, hit}. We notice that, however, under the bound (\ref{FZZ-s}), (quasi-)non-singular solutions do not necessarily converge to smooth Einstein metrics on $X$. Hence, FZZ inequality never follows from Hitchin-Thorpe inequality in general. See \cite{fz-1} for more details. \par
On the other hand, there is a natural diffeomorphism invariant arising from a variational problem for the total scalar curvature of Riemannian metrics on any given closed oriented Riemannian manifold $X$ of dimension $n\geq 3$. As was conjectured by Yamabe \cite{yam}, and later proved by Trudinger, Aubin, and Schoen \cite{aubyam,lp,rick,trud}, every conformal class on any smooth compact manifold contains a Riemannian metric of constant scalar curvature. For each conformal class $[g]=\{ vg ~|~v: X\to {\Bbb R}^+\}$, we are able to consider an associated number $Y_{[g]}$ which is so called {\em Yamabe constant} of the conformal class $[g]$ defined by 
\begin{eqnarray*}
Y_{[g]} = \inf_{h \in [g]}  \frac{\int_X 
s_{{h}}~d\mu_{{h}}}{\left(\int_X 
d\mu_{{h}}\right)^{\frac{n-2}{n}}}, 
\end{eqnarray*}
where $d\mu_{{h}}$ is the volume form with respect to the metric $h$. The Trudinger-Aubin-Schoen theorem tells us that this number is actually realized as the constant scalar curvature of some unit volume metric in the conformal class $[g]$. Then, Kobayashi \cite{kob} and Schoen \cite{sch} independently introduced the following invariant of $X$:
\begin{eqnarray*}
{\mathcal Y}(X) = \sup_{\mathcal{C}}Y_{[g]}, 
\end{eqnarray*}
where $\mathcal{C}$ is the set of all conformal classes on $X$. This is now commonly known as the {\em Yamabe invariant} of $X$. It is known that ${\mathcal Y}(X) \leq 0$ if and only if $X$ does not admit a metric of positive scalar curvature. There is now a substantial literature \cite{ishi-leb-2,leb-4, leb-7, leb-11,jp2, jp3, petyun} concerning manifolds of non-positive Yamabe invariant, and the exact value of the invariant is computed for a large number of these manifolds. In particular, it is also known that the Yamabe invariant is sensitive to the choice of smooth structure of a 4-manifold. After the celebrated works of Donaldson \cite{don, don-kro} and Freedman \cite{free}, it now turns out that quite many exotic smooth structures exist in dimension 4. Indeed, there exists a compact topological 4-manifold $X$ which admits many distinct smooth structures ${Z}^i$. Equivalently, each of the smooth 4-manifolds ${Z}^i$ is homeomorphic to $X$, but never diffeomorphic to each!
  other. One can construct quite many explicite examples of compact topological 4-manifolds admitting distinct smooth structures for which values of the Yamabe invariants are different by using, for instance, a result of LeBrun with the present author \cite{ishi-leb-2}. \par
Now, let us come back to the Ricci flow picture. In this article, we shall observe that the condition (\ref{FZZ-s}) above is closely related to the negativity of the Yamabe invariant of a given smooth Riemannian manifold. More precisely, in Proposition \ref{yama-b} proved in Section \ref{ya} below, we shall see that the condition (\ref{FZZ-s}) is always satisfied for {\it any} solution to the normalized Ricci flow if a given smooth Riemannian manifold $X$ of dimension $n \geq 3$ has ${\mathcal Y}(X)<0$. Moreover, we shall also observe that, in Theorem \ref{bound-four} in Section \ref{ya} below, if a compact topological 4-manifold $M$ admits a smooth structure $Z$ with ${\mathcal Y}<0$ and for which there exists a non-singular solution to the normalized Ricci flow, then the strict FZZ inequality for $Z$ must hold:
$$
2 \chi(Z) > 3|\tau(Z)|, 
$$
where, of course, we identified the compact topological 4-manifold $M$ admits the smooth structure $Z$ with the smooth 4-manifold $Z$. Let us here emphasize that $2 \chi(Z) > 3|\tau(Z)|$ is just a topological constraint, is {\it not} a differential topological one. The observations made in this article and the special feature of smooth structures in dimension 4 naturally lead us to ask the following:
\begin{Pro}\label{Q}
Let $X$ be any compact topological 4-manifold which admits at least two distinct smooth structures $Z^i$ with negative Yamabe invariant ${\mathcal Y}<0$. Suppose that, for at least one of these smooth structures $Z^i$, there exist non-singular solutions to the the normalized Ricci flow. Then, for every other smooth structure $Z^i$ with ${\mathcal Y}<0$, are there always non-singular solutions to the normalized Ricci flow?
\end{Pro}

Since $X$ admits, for at least one of these smooth structures $Z^i$, non-singular solutions to the the normalized Ricci flow, we are able to conclude that $2 \chi({Z}_{i}) > 3|\tau({Z}_{i})|$ holds for every $i$. Notice that this is equivalent to $2 \chi(X) > 3|\tau(X)|$. Hence, even if there are always non-singular solutions to the normalized Ricci flow for every other smooth structure $Z^i$, it dose not contradict the strict FZZ inequality. \par
Interestingly, the main result of this article tells us that the answer to Problem \ref{Q} is negative as follows:
\begin{main}\label{main-A}
For every natural number $\ell$, there exists a simply connected compact topological non-spin 4-manifold $X_{\ell}$ satisfying the following properties:
\begin{itemize}
\item $X_{\ell}$ admits at least $\ell$ different smooth structures $M^i_{\ell}$ with ${\mathcal Y}<0$ and for which there exist non-singular solutions to the the normalized Ricci flow in the sense of Definition \ref{non-sin}. Moreover the existence of the solutions forces the strict FZZ inequality $2 \chi > 3|\tau|$ as a topological constraint, 
\item $X_{\ell}$ also admits infinitely many different smooth structures  $N^j_{\ell}$ with ${\mathcal Y}<0$ and for which there exists no quasi-non-singular solution to the normalized Ricci flow in the sense of Definition \ref{bs}. In particular, there exists no non-singular solution to the the normalized Ricci flow in the sense of Definition \ref{non-sin}. 
\end{itemize} 
\end{main}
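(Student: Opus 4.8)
The plan is to take for $X_\ell$ the homeomorphism type of a simply connected, non-spin, minimal complex surface of general type and to realise both required kinds of smooth structure on this one topological manifold: a K\"ahler-Einstein construction for the structures carrying non-singular solutions, and a Seiberg-Witten obstruction argument for those carrying none.

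For the first family, fix Chern numbers $(c_1^2,c_2)$ with $0<c_1^2<c_2$ and choose $\ell$ pairwise non-diffeomorphic, simply connected, non-spin, minimal complex surfaces of general type $M^1_\ell,\dots,M^\ell_\ell$ with these invariants and ample canonical bundle. (A general member of any deformation class has no $(-2)$-curve, hence ample $K$, so ampleness can always be arranged without altering the smooth structure; that $\ell$ pairwise non-diffeomorphic examples with common Chern numbers exist is a statement from the botany of surfaces of general type, to be quoted from the literature.) By Freedman they are all homeomorphic; call this topological $4$-manifold $X_\ell$, and note $2\chi(X_\ell)+3\tau(X_\ell)=c_1^2>0$, so that $2\chi(X_\ell)>3|\tau(X_\ell)|$ because $\tau(X_\ell)<0$. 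By Aubin-Yau each $M^i_\ell$ carries a K\"ahler-Einstein metric of negative scalar curvature; such a metric is a stationary, hence non-singular, solution of the normalized Ricci flow (Definition \ref{non-sin}), and $\mathcal Y(M^i_\ell)=-4\pi\sqrt{2c_1^2}<0$ by LeBrun's computation. That the existence of a non-singular solution then forces the strict FZZ inequality is precisely Theorem \ref{bound-four}.

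For the second family, fix an integer $k$ with $\tfrac12 c_1^2<k\le\tfrac14(3c_2-c_1^2)$ --- a nonempty range precisely because $c_1^2<c_2$ --- and choose a simply connected, non-spin, minimal complex surface of general type $W$ with $c_1^2(W)=c_1^2+k$ and $c_2(W)=c_2-k$ (geographically admissible by the choice of $k$) containing a $c$-embedded, smoothly embedded square-zero torus $T$ with $[T]\ne 0$ and $T\cdot K_W=0$. Perform Fintushel-Stern knot surgery on $W$ along $T$ with knots $K_j$ of pairwise distinct Alexander polynomials and put $N^j_\ell:=W(K_j)\,\#\,k\,\overline{\CP^2}$. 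An Euler-characteristic and signature computation (with Freedman) shows $N^j_\ell$ is simply connected, non-spin, and homeomorphic to $X_\ell$, while distinct $K_j$ give distinct Seiberg-Witten invariants, so the $N^j_\ell$ are pairwise non-diffeomorphic. The class $K_W$ --- the centroid of the basic classes $K_W+\sum_l \epsilon_l E_l$ of $N^j_\ell$ --- lies in the convex hull of the monopole classes of $N^j_\ell$ and has square $c_1^2+k>0$, so the Seiberg-Witten lower bound on scalar curvature gives $\mathcal Y(N^j_\ell)\le -4\pi\sqrt{2(c_1^2+k)}<0$; in particular $N^j_\ell$ is not diffeomorphic to any $M^i_\ell$. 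If $N^j_\ell$ admitted a quasi-non-singular solution, then $\mathcal Y<0$ and Proposition \ref{yama-b} would force $\hat s_{g(t)}\le -c<0$, whereupon the Seiberg-Witten-refined FZZ-type estimate, applied to the monopole class $K_W$, would give $2\chi(N^j_\ell)+3\tau(N^j_\ell)\ge\tfrac23 K_W^2$, i.e.\ $c_1^2\ge\tfrac23(c_1^2+k)$, i.e.\ $k\le\tfrac12 c_1^2$ --- contrary to the choice of $k$. Hence $N^j_\ell$ carries no quasi-non-singular, a fortiori no non-singular, solution (Definitions \ref{non-sin} and \ref{bs}).

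The main obstacle is that Seiberg-Witten-refined curvature estimate for quasi-non-singular solutions with $\hat s\le -c<0$ --- the analogue along the normalized Ricci flow of LeBrun's Einstein bound $2\chi+3\tau\ge\tfrac23(\mathfrak a^+)^2$ for a monopole class $\mathfrak a$. The route is to use boundedness of the scalar curvature along the flow to obtain $\int_0^\infty\!\int_X|\mathring r_{g(t)}|^2\,d\mu_{g(t)}\,dt<\infty$, hence a sequence of times $t_i\to\infty$ at which the traceless Ricci tensor is small in $L^2$, and at those times to bound the Gauss-Bonnet and signature integrands using the Seiberg-Witten lower bounds on $\int_X s^2$ and on the self-dual Weyl term; the delicate point is that the sharp factor $\tfrac23$, rather than the naive $\tfrac13$ coming from the scalar term alone, must survive in the limit, and this sharpness matters because Noether's inequality $5c_1^2\ge c_2-36$ keeps $c_1^2/c_2$ bounded below, so that the weaker bound would confine the whole construction to a thin strip near the Noether line where the required homeomorphic families of surfaces are scarce. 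A secondary task is to supply those algebro-geometric inputs: the $\ell$ homeomorphic, non-diffeomorphic minimal surfaces of general type with ample $K$, and a surface $W$ of the prescribed Chern numbers carrying a torus suitable for knot surgery.
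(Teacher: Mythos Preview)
Your strategy coincides with the paper's: existence via K\"ahler--Einstein metrics on surfaces with ample canonical bundle (the paper cites Cao's convergence theorem rather than the observation that the Einstein metric is itself a stationary solution, but either suffices), and non-existence via the Seiberg--Witten--sharpened FZZ bound $2\chi+3\tau\ge\tfrac23\beta^2$ applied to a blow-up, exactly as you outline in your last paragraph.  The paper draws both families from Braungardt--Kotschick \cite{b-k}: the $\ell$-tuples from their Corollary~1 (Salvetti-type branched covers with distinct divisibilities of $K$), and the infinite family from their Theorem~4, which already supplies infinitely many homeomorphic, pairwise non-diffeomorphic \emph{symplectic} $4$-manifolds via logarithmic transforms --- so no separate knot-surgery step is needed; the homeotypes are then matched by playing the density of $c_1^2/\chi_h$ in $[4,8]$ for the first family against the region $c_1^2/\chi_h<6$ covered by the obstructed family.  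Your replacement of log transforms by Fintushel--Stern knot surgery is a legitimate alternative (the symmetry $\Delta_K(t)=\Delta_K(t^{-1})$ does place $K_W$ in the convex hull of monopole classes, and the blow-up formula preserves distinctness of SW invariants), but your demand that $W$ be a \emph{minimal complex surface of general type} carrying a $c$-embedded square-zero torus with $[T]\ne0$ is difficult to arrange for prescribed Chern numbers; it is both sufficient for the monopole-class argument and far easier to take $W$ merely symplectic (built, say, as a Gompf fiber sum), which is in effect what the paper does by quoting \cite{b-k}.  Your identification of the sharp constant $\tfrac23$ --- arising from LeBrun's combined scalar--Weyl estimate (Corollary~\ref{bound-cor} here) rather than the scalar bound alone --- as the crux of the analytic step is exactly right, and is precisely the content of Theorems~\ref{key-mono-b} and~\ref{ricci-ob-1}.
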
 
Notice that Freedman's classification \cite{free} implies that $X_{\ell}$ above must be homeomorphic to a connected sum $p{\mathbb C}{P}^2 \# q \overline{{\mathbb C}{P}^2}$, where ${\mathbb C}{P}^2$ is the complex projective plane and ${\mathbb C}{P}^2$ is the complex projective plane with the reversed orientation, and $p$ and $q$ are some appropriate positive integers which depend on the natural number $\ell$. Notice also that, for the standard smooth structure on $p{\mathbb C}{P}^2 \# q \overline{{\mathbb C}{P}^2}$, we have ${\mathcal Y} >0$ because, by a result of Schoen and Yau \cite{rick-yau} or Gromov and Lawson \cite{g-l}, there exists a Riemannian metric of positive scalar curvature for such a smooth structure. Hence, smooth structures which appear in Theorem \ref{main-A} are far from the standard smooth structure. On the other hand, notice also that the second statement of Theorem \ref{main-A} tells us that the topological 4-manifold $X_{\ell}$ admits infinitely many different smooth structures  $N^j_{\ell}$ with ${\mathcal Y}<0$ and for which any solution to the normalized Ricci flow always becomes singular for any initial metric. In the case of 4-manifolds with ${\mathcal Y} >0$, for example, consider a smooth 4-manifold with positive isotropic curvature metric $g$ and with no essential incompressible space form. Then it is known that the Ricci flow develops singularites for the initial metric $g$. The structure of singularites is studied deeply by Hamilton \cite{ha-3, ha-4} and Chen and Zhu \cite{c-z}. In the present article, however, we do not pursue this issue in our case ${\mathcal Y}<0$. \par 
To the best of our knowledge, Theorem \ref{main-A} is the first result which shows that, in dimension 4, smooth structures become definite obstructions to the existence of non-singular solutions to the normalized Ricci flow. Namely, Theorem \ref{main-A} teaches us that the existence or non-existence of non-singular solutions to the normalized Ricci flow depends strictly on the diffeotype of a 4-manifold and it is {\it not} determined by homeotype alone. This gives a completely new insight into the property of solutions to the Ricci flow on 4-manifolds. \par
To prove the non-existence result in Theorem \ref{main-A}, we need to prove new obstructions to the existence of non-singular solutions to the normalized Ricci flow. Indeed, it is the main non-trivial step in the proof of Theorem \ref{main-A}. For instance, we shall prove the following obstruction:
\begin{main}\label{main-B}
Let $X$ be a closed symplectic 4-manifold with $b^{+}(X) \geq 2$ and $2 \chi(X) + 3\tau(X)>0$, where ${b}^{+}(X)$ stands for the dimension of a maximal positive definite subspace of ${H}^{2}(X, {\mathbb R})$ with respect to the intersection form. Then, there is no non-singular solution of the normalized Ricci flow on a connected sum $M:=X \# k{\overline{{\mathbb C}{P}^2}}$ if 
\begin{eqnarray*}
k \geq \frac{1}{3}\Big(2 \chi(X) + 3\tau(X) \Big). 
\end{eqnarray*}
\end{main}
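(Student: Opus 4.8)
The plan is to contradict the existence of a non-singular solution on $M:=X\#k\overline{\CP^2}$ by exploiting the Seiberg--Witten curvature estimates it carries. Since $X$ is symplectic with $b^+(X)\ge 2$, a theorem of Taubes makes the canonical class $K_X$ (equivalently $\pm c_1(X,\omega)$) a monopole class of $X$, and the blow-up formula for Seiberg--Witten invariants makes each class $K_X\pm E_1\pm\cdots\pm E_k\in H^2(M;\ZZ)$ a monopole class of $M$, where $E_1,\dots,E_k$ denote the exceptional classes. The point to exploit is that for \emph{any} Riemannian metric $g$ on $M$ one may choose the signs --- and take $+K_X$ --- so that each of the $g$-self-dual parts $(E_i)_g^+$ contributes non-negatively against $(K_X)_g^+$; then the resulting monopole class $\mathfrak{a}_g$ satisfies
$$(\mathfrak{a}_g^+)^2\ \ge\ \big((K_X)_g^+\big)^2\ \ge\ K_X^2\ =\ 2\chi(X)+3\tau(X)\ >\ 0 ,$$
so that $(\mathfrak{a}_g^+)^2$ is bounded below by the fixed positive integer $2\chi(X)+3\tau(X)$, uniformly in $g$.

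Feeding $\mathfrak{a}_g$ into LeBrun's Seiberg--Witten curvature estimates then provides two ingredients. First, the scalar-curvature estimate $\int_M s_g^2\,d\mu_g\ge 32\pi^2(\mathfrak{a}_g^+)^2\ge 32\pi^2\big(2\chi(X)+3\tau(X)\big)>0$ shows that $\mathcal{Y}(M)<0$; hence, by Proposition \ref{yama-b}, every solution $\{g(t)\}$ of the normalized Ricci flow on $M$ automatically satisfies $\hat s_{g(t)}\le -c<0$ for a constant $c$ independent of $t$. Second, LeBrun's refined, Weyl-corrected estimate gives, for every metric $g$ on $M$,
$$\frac{1}{4\pi^2}\int_M\Big(\frac{s_g^2}{24}+2|W^+_g|^2\Big)\,d\mu_g\ \ge\ \frac23(\mathfrak{a}_g^+)^2\ \ge\ \frac23\big(2\chi(X)+3\tau(X)\big) .$$

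Now suppose, for contradiction, that a non-singular solution $\{g(t)\}_{t\in[0,\infty)}$ exists; then $\sup_{M\times[0,\infty)}|Rm_{g(t)}|<\infty$, $vol_{g(t)}$ is constant, and $\hat s_{g(t)}\le -c<0$. The technical core of the argument is to deduce from this that
$$2\chi(M)+3\tau(M)\ >\ \frac23\big(2\chi(X)+3\tau(X)\big) .$$
I would obtain this by combining the Gauss--Bonnet and signature formulas, which for every $t$ give the identity
$$2\chi(M)+3\tau(M)=\frac{1}{4\pi^2}\int_M\Big(\frac{s_{g(t)}^2}{24}+2|W^+_{g(t)}|^2-\frac{|\ro_{g(t)}|^2}{2}\Big)\,d\mu_{g(t)} ,$$
with the refined estimate of the previous paragraph, \emph{once} one knows that along the non-singular solution the trace-free Ricci tensor is $L^2$-negligible along a suitable sequence of times, i.e. $\liminf_{t\to\infty}\int_M|\ro_{g(t)}|^2\,d\mu_{g(t)}=0$. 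Establishing this last statement --- that a non-singular solution becomes, in an $L^2$-averaged sense, asymptotically Einstein --- is where the hypotheses $\sup|Rm_{g(t)}|<\infty$, $vol_{g(t)}=\mathrm{const}$ and $\hat s_{g(t)}\le -c<0$ are used, along the lines of Hamilton's analysis of non-singular solutions in dimension three \cite{ha-1} and of Fang--Zhang--Zhang \cite{fz-1}; this is, I expect, the main obstacle in the proof. Strictness of the displayed inequality then follows exactly as in LeBrun's Einstein obstruction: equality would force the refined Seiberg--Witten estimate to be asymptotically saturated, which is incompatible with the uniform lower bound $\int_M s_{g(t)}^2\,d\mu_{g(t)}\ge 32\pi^2\big(2\chi(X)+3\tau(X)\big)>0$ and with $M$ being a blow-up.

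It then remains only to compute. Since $\chi(M)=\chi(X)+k$ and $\tau(M)=\tau(X)-k$, we have $2\chi(M)+3\tau(M)=\big(2\chi(X)+3\tau(X)\big)-k$, so the strict inequality just established reads $\big(2\chi(X)+3\tau(X)\big)-k>\frac23\big(2\chi(X)+3\tau(X)\big)$, i.e. $k<\frac13\big(2\chi(X)+3\tau(X)\big)$. This contradicts the hypothesis $k\ge\frac13\big(2\chi(X)+3\tau(X)\big)$, and hence no non-singular solution of the normalized Ricci flow on $M$ can exist. Apart from the asymptotic-analysis step singled out above, the whole argument is bookkeeping with LeBrun's estimates and the Gauss--Bonnet/signature identities.
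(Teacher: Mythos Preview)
Your proposal is correct and follows essentially the same route as the paper. The only stylistic difference is that where you bound $(\mathfrak{a}_g^+)^2\ge c_1^2(X)$ by a metric-dependent choice of signs on the exceptional classes (the approach of LeBrun's earlier paper \cite{leb-11}), the paper packages this via the convex-hull invariant $\beta^2(M)$: since $K_X=\tfrac12(K_X+\sum E_i)+\tfrac12(K_X-\sum E_i)\in\mathbf{Hull}(\mathfrak{C}(M))$, one gets $\beta^2(M)\ge c_1^2(X)$, and then Proposition~\ref{main-leb} supplies, for each metric, a monopole class with $(\mathfrak{a}^+)^2\ge\beta^2(M)$. The two devices are equivalent here; the convex-hull formalism just separates the metric-independent number $\beta^2(M)$ from the metric-dependent estimates, while your sign-choice argument does both at once. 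The ``asymptotically Einstein'' step you flag as the obstacle is exactly Proposition~\ref{FZZ-prop} (via Lemma~\ref{FZZ-lem}), and your strictness argument --- that equality in the Weyl-corrected estimate cannot be approached because $M$, being a nontrivial blow-up, admits no K\"ahler--Einstein metric of negative scalar curvature --- is the same as the paper's.
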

See also Theorem \ref{ricci-ob-1} and Theorem \ref{ricci-ob-2} below for more general obstructions. We shall use the Seiberg-Witten monopole equations \cite{w} to prove the obstructions. We should notice that, under the same condition, LeBrun \cite{leb-11} firstly proved that $M$ above cannot admit any Einstein metric by using Seiberg-Witten monopole equations. As was already mentioned above, notice that, however, (quasi-)non-singular solutions do not necessarily converge to smooth Einstein metrics on $M$ under the bound (\ref{FZZ-s}). Hence, the above non-existence result on non-singular solutions never follows from the obstruction of LeBrun in general. In this sense, the above obstruction in Theorem \ref{main-B} is new and non-trivial. On the other hand, to prove the existence result of non-singular solutions in Theorem \ref{main-A}, we shall use a very nice result of Cao \cite{c, c-c} concerning the existence of non-singular solutions to the normalized Ricci flow on compact K{\"{a}}hler manifolds. By combining non-existence result derived from Theorem \ref{main-B} with the existence result of Cao, we shall give a proof of Theorem \ref{main-A}. \par
The organization of this article is as follows. In Section \ref{ht}, we shall recall the proof of the FZZ inequality (\ref{FZZ}) because we shall use, in Section \ref{obstruction} below, the idea of the proof to prove new obstructions to the existence of non-singular solutions to the normalized Ricci flow. In Section 3, first of all, we shall prove that the condition (\ref{FZZ-s}) above is always satisfied for any solution to the normalized Ricci flow if a given Riemannian manifold $X$ has negative Yamabe invariant. Moreover, we shall improve the FZZ inequality (\ref{FZZ}) under an assumption that a given Riemannian manifold $X$ has negative Yamabe invariant. This motivates Problem \ref{Q} partially. See Theorem \ref{bound-four} below. In Section \ref{monopoles}, we shall discuss curvature bounds arising from the Seiberg-Witten monopole equations. In fact, we shall firstly recall, for the reader who is unfamiliar with Seiberg-Witten theory, these curvature bounds following a!
  recent beautiful article \cite{leb-17} of LeBrun. And we shall prove, by using the curvature bounds, some results which are needed to prove the new obstructions. The main results of this section are Theorems \ref{yamabe-pere} and \ref{key-mono-b} below. In Section \ref{obstruction}, we shall prove the new obstructions by gathering results proved in the previous several sections. See Theorem \ref{ricci-ob-1}, Corollary \ref{non-sin-cor} (Theorem \ref{main-B}) and Theorem \ref{ricci-ob-2} below. In Section \ref{final-main}, we shall finally give a proof of the main theorem, i.e., Theorem \ref{main-A}, by using particularly Corollary \ref{non-sin-cor} (Theorem \ref{main-B}). Finally, in Section \ref{remark}, we shall conclude this article by giving some open questions which are closely related to Theorem \ref{main-A}. \par
The main part of this work was done during the present author's stay at State University of New York at Stony Brook in 2006. I would like to express my deep gratitude to Claude LeBrun for his warm encouragements and hospitality. I would like to thank the Department of Mathemathics of SUNY at Stony Brook for their hospitality and nice atmosphere during the preparation of this article.

\section{Hitchin-Thorpe Type Inequality for the Normalized Ricci Flow}\label{ht}

In this section, we shall recall the proof of the Fang-Zhang-Zhang inequality (\ref{FZZ}) for the normalized Ricci flow. We shall use the idea of the proof, in Section 5 below, to prove new obstructions. We notice that, throughout the article \cite{fz-1}, the authors of \cite{fz-1} assume that any solution $\{g(t)\}$, $t \in [0, \infty)$, to the normalized Ricci follow has {\it unite volume}, namely, ${vol}_{g(t)}=1$ holds for all $t \in [0, \infty)$. Since the normalized Ricci flow preserves the volume of the solution, this condition is equivalent to the condition that ${vol}_{g(0)}=1$ for the initial metric $g(0)$. Though one can always assume this condition by rescaling the metic, such a condition is not essential. In what follows, let us give a proof of the FZZ inequality without such a condition on the volume. Lemma \ref{FZZ-lem}, Proposition \ref{FZZ-prop} and Theorem \ref{fz-key} below are essentially due to the authors of \cite{fz-1}. We shall include its proof for completeness and the reader's convenience. \par
Now, let $X$ be a closed oriented Riemannian 4-manifold. Then, the Chern-Gauss-Bonnet formula and the Hirzebruch signature formula tell us that the following formulas hold for {\it any} Riemannian metric $g$ on $X$:
\begin{eqnarray*}
\tau(X)=\frac{1}{12{\pi}^2}{\int}_{X}\Big(|W^+_{g}|^2-|W^-_{g}|^2 \Big) d{\mu}_{g}, \\
\chi(X) = \frac{1}{8{\pi}^2}{\int}_{X}\Big(\frac{{s}^2_{g}}{24}+|W^+_{g}|^2+|W^-_{g}|^2-\frac{|\stackrel{\circ}{r}_{g}|^2}{2}  \Big) d{\mu}_{g}, 
\end{eqnarray*}
where $W^+_{g}$ and $W^-_{g}$ denote respectively the self-dual and anti-self-dual Weyl curvature of the metric $g$ and $\stackrel{\circ}{r}_{g}$ is the trace-free part of the Ricci curvature of the metric $g$. And $s_{g}$ is again the scalar curvature of the metric $g$ and $d\mu_{{g}}$ is the volume form with respect to $g$. By these formulas, we are able to get the following important equality:
\begin{eqnarray}\label{4-im}
2\chi(X) \pm 3\tau(X) = \frac{1}{4{\pi}^2}{\int}_{X}\Big(2|W^{\pm}_{g}|^2+\frac{{s}^2_{g}}{24}-\frac{|\stackrel{\circ}{r}_{g}|^2}{2} \Big) d{\mu}_{g}.  
\end{eqnarray}
If $X$ admits an Einstein metric $g$, then we have $\stackrel{\circ}{r}_{g} \equiv 0$. The above formula therefore implies that any Einstein 4-manifold must satisfies
$$
2 \chi(X) \geq 3|\tau(X)|.
$$ 
This is nothing but the Hitchin-Thorpe inequality \cite{thor, hit}. As was already mentioned in Introduction, it is proved that, in \cite{fz-1}, the same inequality still holds for some 4-manifold which is {\it not} necessarily Einstein. Namely, under the existence of quasi-non-singular solutions satisfying the uniform bound (\ref{FZZ-s}) to the normalized Ricci flow, the same inequality still holds. \par
A key observation is the following lemma. This is proved in Lemma 2.7 of \cite{fz-1} for unit volume solution. We would like to point out that the following lemma was already proved essentially by Lemma 7.1 in the article \cite{ha-1} of Hamilton. Notice that, we do not assume that $vol_{{g(t)}}=1$ holds for any $t \in [0, \infty)$:
\begin{lem}\label{FZZ-lem}
Let $X$ be a closed oriented Riemannian manifold of dimension $n$ and assume that there is a quasi-non-singular solution $\{g(t)\}$, $t \in [0, \infty)$, to the normalized Ricci flow in the sense of Definition \ref{bs}. Assume moreover that the solution satisfies the uniform bound (\ref{FZZ-s}), namely, 
\begin{eqnarray*}
\hat{s}_{g(t)} \leq -c <0
\end{eqnarray*}
holds, where the constant $c$ is independent of $t$ and define as $\hat{s}_{g} := \min_{x \in X}{s}_{g}(x)$ for a given Riemannian metric $g$. Then the following two bounds
\begin{eqnarray}\label{fzz-key-0}
{\int}^{\infty}_{0}\Big(\overline{s}_{g(t)}- \hat{{s}}_{g(t)} \Big)dt < \infty,  
\end{eqnarray}
\begin{eqnarray}\label{fzz-key}
{\int}^{\infty}_{0}{\int}_{X}| {{s}}_{g(t)}-\overline{s}_{g(t)} |d{\mu}_{g(t)}dt \leq 2{vol}_{g(0)} {\int}^{\infty}_{0}\Big(\overline{s}_{g(t)}- \hat{{s}}_{g(t)} \Big) dt <\infty
\end{eqnarray}
hold, where $\overline{s}_{g(t)}:={{\int}_{X} {s}_{g(t)} d{\mu}_{g(t)}}/{vol_{{g(t)}}}$. 
\end{lem}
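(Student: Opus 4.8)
The plan is to exploit the evolution equation of the scalar curvature under the normalized Ricci flow together with the maximum principle, following the idea of Hamilton's Lemma 7.1 in \cite{ha-1}. Recall that under the normalized Ricci flow the scalar curvature satisfies $\frac{\partial}{\partial t}s_{g(t)} = \Delta_{g(t)} s_{g(t)} + 2|\stackrel{\circ}{r}_{g(t)}|^2 + \frac{2}{n}s_{g(t)}\big(s_{g(t)}-\overline{s}_{g(t)}\big)$, so in particular $\frac{\partial}{\partial t}s_{g(t)} \geq \Delta_{g(t)} s_{g(t)} + \frac{2}{n}s_{g(t)}\big(s_{g(t)}-\overline{s}_{g(t)}\big)$. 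Applying the parabolic maximum principle at a point where $s_{g(t)}$ attains its spatial minimum, one gets an ODE differential inequality for $\hat{s}_{g(t)}$; using the assumed uniform negative bound $\hat{s}_{g(t)}\leq -c<0$ and the elementary inequality $\hat{s}_{g(t)}\leq \overline{s}_{g(t)}$, this will yield $\frac{d}{dt}\hat{s}_{g(t)} \geq \frac{2}{n}\hat{s}_{g(t)}\big(\hat{s}_{g(t)}-\overline{s}_{g(t)}\big)$, i.e.\ $\frac{d}{dt}\hat{s}_{g(t)} \geq \frac{2c}{n}\big(\overline{s}_{g(t)}-\hat{s}_{g(t)}\big) \geq 0$ (the sign working out because $\hat{s}_{g(t)}$ is negative). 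Hence $\hat{s}_{g(t)}$ is nondecreasing, and being bounded above (by $\overline{s}_{g(t)}$, which one checks is itself bounded above using the quasi-non-singular hypothesis that $\sup|s_{g(t)}|<\infty$), it converges as $t\to\infty$. Integrating the differential inequality from $0$ to $\infty$ then bounds $\int_0^\infty\big(\overline{s}_{g(t)}-\hat{s}_{g(t)}\big)\,dt$ by $\frac{n}{2c}\big(\lim_{t\to\infty}\hat{s}_{g(t)} - \hat{s}_{g(0)}\big) < \infty$, which is exactly \eqref{fzz-key-0}.

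For \eqref{fzz-key}, the second inequality there is just \eqref{fzz-key-0} already proved, so it remains to establish the pointwise-to-integral comparison $\int_X |s_{g(t)}-\overline{s}_{g(t)}|\,d\mu_{g(t)} \leq 2\,\mathrm{vol}_{g(t)}\big(\overline{s}_{g(t)}-\hat{s}_{g(t)}\big)$ for each fixed $t$, and then note $\mathrm{vol}_{g(t)} = \mathrm{vol}_{g(0)}$ by volume-preservation of the normalized flow. The pointwise comparison follows by splitting $X$ into the region $\{s_{g(t)}\geq \overline{s}_{g(t)}\}$ and its complement: on the latter, $|s_{g(t)}-\overline{s}_{g(t)}| = \overline{s}_{g(t)}-s_{g(t)} \leq \overline{s}_{g(t)}-\hat{s}_{g(t)}$ trivially; on the former, one uses that $\int_X (s_{g(t)}-\overline{s}_{g(t)})\,d\mu_{g(t)} = 0$ by definition of $\overline{s}_{g(t)}$, so the integral of the positive part equals the integral of the negative part, and the total is at most twice the negative-part integral, which is in turn at most $\mathrm{vol}_{g(t)}\big(\overline{s}_{g(t)}-\hat{s}_{g(t)}\big)$. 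Combining the two regions gives the factor $2$.

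The main obstacle is the rigorous application of the maximum principle to the non-smooth function $t\mapsto\hat{s}_{g(t)}$: since a minimum over $X$ of a smooth family need not be differentiable in $t$, one must argue with forward difference quotients (or Dini derivatives) and invoke Hamilton's lemma on the differentiability of such extrema, which is where the uniform curvature-type control from the quasi-non-singular hypothesis is genuinely used to keep all quantities well-behaved and to guarantee the solution exists for all $t\in[0,\infty)$. Once that technical point is handled, everything else is the elementary ODE comparison and the region-splitting integral estimate sketched above. Note also that the proof nowhere requires $\mathrm{vol}_{g(t)}=1$; the volume simply enters as the constant $\mathrm{vol}_{g(0)}$ in \eqref{fzz-key}, which is the only change from the unit-volume statement in \cite{fz-1}.
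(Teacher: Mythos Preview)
Your proposal is correct and follows essentially the same approach as the paper: the evolution equation for $s_{g(t)}$ together with the maximum principle yields the ODE inequality $\frac{d}{dt}\hat{s}_{g(t)} \geq \frac{2c}{n}(\overline{s}_{g(t)}-\hat{s}_{g(t)})$, which integrates to \eqref{fzz-key-0}, and an elementary integral manipulation using $\int_X s_{g(t)}\,d\mu_{g(t)} = \int_X \overline{s}_{g(t)}\,d\mu_{g(t)}$ gives \eqref{fzz-key}. The only cosmetic differences are that the paper derives \eqref{fzz-key} via the pointwise triangle inequality $|s-\overline{s}| = |(s-\hat{s})-(\overline{s}-\hat{s})| \leq (s-\hat{s})+(\overline{s}-\hat{s})$ rather than your positive/negative-part splitting, and that for the upper bound on $\hat{s}_{g(t)}$ needed to integrate the ODE inequality you need not invoke the quasi-non-singular hypothesis at all---the standing assumption $\hat{s}_{g(t)}\leq -c$ already bounds it above.
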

\begin{proof} 
As was already used in Lemma 2.7 in \cite{fz-1}, we shall also use an idea due to Hamilton \cite{ha-1}. More precisely, we shall use the idea of the proof of Lemma 7.1 in \cite{ha-1}. Recall the evolution equation for $s_{g(t)}$: 
\begin{eqnarray*}
\frac{\partial s_{g(t)}}{\partial t} = \Delta s_{g(t)} + 2|Ric_{g(t)}|^2 - \frac{2}{n}\overline{s}_{g(t)}{s}_{g(t)}  
\end{eqnarray*}
which was firstly derived by Hamilton \cite{ha-0}. If we decompose the Ricci tensor $Ric$ into its trace-free part $\stackrel{\circ}{r}$ and its trace $s$, then we have
\begin{eqnarray*}
|Ric_{g(t)}|^2 = |\stackrel{\circ}{r}_{g(t)}|^2 + \frac{1}{n}s_{g(t)}\Big(s_{g(t)}-\overline{s}_{g(t)} \Big) + \frac{\overline{s}_{g(t)}}{n}{s}_{g(t)}. 
\end{eqnarray*}
We therefore obtain the following 
\begin{eqnarray}\label{evolution}
\frac{\partial s_{g(t)}}{\partial t} = \Delta s_{g(t)} + 2|\stackrel{\circ}{r}_{g(t)}|^2 + \frac{2}{n}s_{g(t)}\Big(s_{g(t)}-\overline{s}_{g(t)} \Big). 
\end{eqnarray}
From this, we are able to get the ordinary differential inequality:
\begin{eqnarray*}
\frac{d}{d t}\hat{s}_{g(t)} \geq \frac{2}{n}\hat{s}_{g(t)}\Big(\hat{s}_{g(t)}-\overline{s}_{g(t)} \Big). 
\end{eqnarray*}
Since the solution satisfies the uniform bound (\ref{FZZ-s}), we have
\begin{eqnarray*}
\frac{d}{d t}\hat{s}_{g(t)} \geq \frac{2c}{n}\Big(\overline{s}_{g(t)}-\hat{s}_{g(t)} \Big). 
\end{eqnarray*}
It is clear that this inequality indeed implies the desired bound (\ref{fzz-key-0}). \par
On the other hand, we have the following inequality (see also the proof of Lemma 7.1 in \cite{ha-1}):
\begin{eqnarray*}
\Big|{s}_{g(t)}-\overline{s}_{g(t)}\Big| = \Big|\Big({s}_{g(t)}-\hat{{s}}_{g(t)}\Big)-\Big(\overline{s}_{g(t)}-\hat{{s}}_{g(t)}\Big)\Big| \leq \Big({s}_{g(t)}-\hat{{s}}_{g(t)}\Big)+\Big(\overline{s}_{g(t)}-\hat{{s}}_{g(t)}\Big). 
\end{eqnarray*}
This implies the following:
\begin{eqnarray*}
{\int}_{X}|{s}_{g(t)}-\overline{s}_{g(t)}|d{\mu}_{g(t)} \leq {\int}_{X}\Big({s}_{g(t)}-\hat{{s}}_{g(t)}\Big)d{\mu}_{g(t)} + {\int}_{X}\Big(\overline{s}_{g(t)}-\hat{{s}}_{g(t)}\Big)d{\mu}_{g(t)}. 
\end{eqnarray*}
On the other hand, notice that the following holds: 
\begin{eqnarray*}
{\int}_{X}\overline{s}_{g(t)}d{\mu}_{g(t)}={\int}_{X}\Big(\frac{{\int}_{X} {s}_{g(t)} d{\mu}_{g(t)}}{vol_{{g(t)}}} \Big)d{\mu}_{g(t)} ={\int}_{X}{s}_{g(t)}d{\mu}_{g(t)}. 
\end{eqnarray*}
We therefore obtain 
\begin{eqnarray*}
{\int}_{X}|{s}_{g(t)}-\overline{s}_{g(t)}|d{\mu}_{g(t)} \leq 2{\int}_{X}\Big(\overline{s}_{g(t)}-\hat{{s}}_{g(t)}\Big)d{\mu}_{g(t)}=2{vol}_{g(t)}\Big(\overline{s}_{g(t)}-\hat{{s}}_{g(t)}\Big). 
\end{eqnarray*}
Moreover, as was already mentioned, the normalized Ricci flow preserves the volume of the solution. We therefore have $vol_{g(t)}=vol_{g(0)}$. Hence, 
\begin{eqnarray*}
{\int}_{X}|{s}_{g(t)}-\overline{s}_{g(t)}|d{\mu}_{g(t)} \leq 2{vol}_{g(0)}\Big(\overline{s}_{g(t)}-\hat{{s}}_{g(t)}\Big). 
\end{eqnarray*}
This tells us that 
\begin{eqnarray*}
{\int}^{\infty}_{0}{\int}_{X}| {{s}}_{g(t)}-\overline{s}_{g(t)} |d{\mu}_{g(t)}dt \leq 2{vol}_{g(0)} {\int}^{\infty}_{0}\Big(\overline{s}_{g(t)}- \hat{{s}}_{g(t)}\Big)dt. 
\end{eqnarray*}
This inequality with the bound (\ref{fzz-key-0}) implies the desired bound (\ref{fzz-key}). 
\end{proof} 

Using the above lemma, we are able to show a real key proposition to prove the FZZ inequality. The following result is pointed out in Lemma 3.1 of \cite{fz-1} for unit volume solution and $n=4$
\begin{prop}\label{FZZ-prop}
Let $X$ be a closed oriented Riemannian manifold of dimension $n$ and assume that there is a quasi-non-singular solution $\{g(t)\}$, $t \in [0, \infty)$, to the normalized Ricci flow in the sense of Definition \ref{bs}. Assume moreover that the solution satisfies the uniform bound (\ref{FZZ-s}), namely, 
\begin{eqnarray*}
\hat{s}_{g(t)} \leq -c <0
\end{eqnarray*}
holds, where the constant $c$ is independent of $t$ and define as $\hat{s}_{g} := \min_{x \in X}{s}_{g}(x)$ for a given Riemannian metric $g$. Then, the trace-free part $\stackrel{\circ}{r}_{g(t)}$ of the Ricci curvature satisfies 
\begin{eqnarray}\label{fzz-ricci}
{\int}^{\infty}_{0} {\int}_{X} |\stackrel{\circ}{r}_{g(t)}|^2 d{\mu}_{g(t)}dt < \infty. 
\end{eqnarray}
\end{prop}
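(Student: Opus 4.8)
The plan is to integrate the evolution equation (\ref{evolution}) for the scalar curvature over $X$ and then in time, and to control every resulting term by Lemma \ref{FZZ-lem} together with the quasi-non-singularity hypothesis. First I would recall that, under the normalized Ricci flow, the volume measure evolves by $\frac{\partial }{\partial t} d{\mu}_{g(t)} = \big( \overline{s}_{g(t)} - {s}_{g(t)} \big)\, d{\mu}_{g(t)}$. Combining this with (\ref{evolution}) and the fact that ${\int}_{X} \Delta {s}_{g(t)}\, d{\mu}_{g(t)} = 0$, a direct computation gives
\begin{eqnarray*}
\frac{d}{dt}{\int}_{X} {s}_{g(t)}\, d{\mu}_{g(t)} = 2{\int}_{X} |\ro_{g(t)}|^2 d{\mu}_{g(t)} - \frac{n-2}{n}{\int}_{X} \big({s}_{g(t)} - \overline{s}_{g(t)}\big)^2 d{\mu}_{g(t)},
\end{eqnarray*}
where I have used ${\int}_{X} {s}_{g(t)}\big({s}_{g(t)} - \overline{s}_{g(t)}\big) d{\mu}_{g(t)} = {\int}_{X} \big({s}_{g(t)} - \overline{s}_{g(t)}\big)^2 d{\mu}_{g(t)}$, which holds because ${\int}_{X} \big({s}_{g(t)} - \overline{s}_{g(t)}\big) d{\mu}_{g(t)} = 0$. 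Solving for the $|\ro|^2$ term and integrating over $t \in [0, T]$ then yields
\begin{eqnarray*}
2{\int}^{T}_{0} {\int}_{X} |\ro_{g(t)}|^2 d{\mu}_{g(t)}\, dt = \Big[{\int}_{X} {s}_{g(t)}\, d{\mu}_{g(t)}\Big]^{T}_{0} + \frac{n-2}{n}{\int}^{T}_{0} {\int}_{X} \big({s}_{g(t)} - \overline{s}_{g(t)}\big)^2 d{\mu}_{g(t)}\, dt.
\end{eqnarray*}

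Next I would bound the two terms on the right-hand side uniformly in $T$. For the boundary term, note that ${\int}_{X} {s}_{g(t)}\, d{\mu}_{g(t)} = \overline{s}_{g(t)}\, vol_{g(t)} = \overline{s}_{g(t)}\, vol_{g(0)}$ by volume preservation, and $|\overline{s}_{g(t)}| \leq \sup_{X \times [0,\infty)}|{s}_{g(t)}| < \infty$ since the solution is quasi-non-singular; hence $\big[{\int}_{X} {s}_{g(t)}\, d{\mu}_{g(t)}\big]^{T}_{0}$ is bounded independently of $T$. For the second term, I would use the pointwise estimate $\big({s}_{g(t)} - \overline{s}_{g(t)}\big)^2 \leq \big(2\sup_{X\times[0,\infty)}|{s}_{g(t)}|\big)\,\big|{s}_{g(t)} - \overline{s}_{g(t)}\big|$, valid again by quasi-non-singularity, to reduce the space-time $L^2$ integral to the space-time $L^1$ integral ${\int}^{\infty}_{0} {\int}_{X} |{s}_{g(t)} - \overline{s}_{g(t)}|\, d{\mu}_{g(t)}\, dt$, which is finite by the bound (\ref{fzz-key}) of Lemma \ref{FZZ-lem}. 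Therefore the right-hand side is bounded independently of $T$, and since the integrand $|\ro_{g(t)}|^2$ is non-negative, letting $T \to \infty$ gives (\ref{fzz-ricci}).

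The derivation of the displayed identity for $\frac{d}{dt}{\int}_{X} {s}_{g(t)}\, d{\mu}_{g(t)}$ is routine bookkeeping. The genuinely load-bearing point is that quasi-non-singularity — which bounds $\sup|{s}_{g(t)}|$, and hence $|\overline{s}_{g(t)}|$ and $\sup_{X}|{s}_{g(t)} - \overline{s}_{g(t)}|$ — is exactly what simultaneously tames the boundary term and upgrades the $L^1$-in-space-time control furnished by Lemma \ref{FZZ-lem} to the $L^2$-in-space-time control required here. Beyond carefully tracking these uniform bounds, I expect no serious obstacle.
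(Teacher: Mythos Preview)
Your proof is correct and follows essentially the same strategy as the paper's: integrate the evolution equation (\ref{evolution}) over $X$ and in time, bound the boundary term via quasi-non-singularity, and control the remaining space-time integral by combining the uniform bound on $|s_{g(t)}|$ with the $L^1$ estimate (\ref{fzz-key}) from Lemma~\ref{FZZ-lem}. If anything, your bookkeeping is slightly cleaner, since you explicitly include the evolution of $d\mu_{g(t)}$ and arrive at the tidy identity with $(s_{g(t)}-\overline{s}_{g(t)})^2$, whereas the paper writes ${\int}_{X}\partial_{t}s_{g(t)}\,d\mu_{g(t)} = \partial_{t}{\int}_{X}s_{g(t)}\,d\mu_{g(t)}$ without separating out the volume-form term (which is harmless here but is exactly the term you handled carefully).
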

\begin{proof}
Now suppose that there exists a quasi-non-singular solution to the normalized Ricci flow on a closed oriented manifold $X$ of dimension $n$. As before, let us consider the evolution equation (\ref{evolution}) for the scalar curvature of the solution:
\begin{eqnarray*}
\frac{\partial s_{g(t)}}{\partial t} = \Delta s_{g(t)} + 2|\stackrel{\circ}{r}_{g(t)}|^2 + \frac{2}{n}s_{g(t)}\Big(s_{g(t)}-\overline{s}_{g(t)} \Big). 
\end{eqnarray*}
Notice that, by the assumption that the solution is quasi-non-singular in the sense of Definition \ref{bs}, we are able to conclude that there is a constant $C$ which is independent of both $t \in [0, \infty)$ and $x \in X$, and $|{s}_{g(t)}|< C$ holds. We therefore obtain
\begin{eqnarray*}
{\int}^{\infty}_{0} {\int}_{X} |\stackrel{\circ}{r}_{g(t)}|^2 d{\mu}_{g(t)}dt &=& \frac{1}{2}{\int}^{\infty}_{0} {\int}_{X} \frac{\partial s_{g(t)}}{\partial t} d{\mu}_{g(t)}dt-\frac{1}{n}{\int}^{\infty}_{0} {\int}_{X} {s_{g(t)}}\Big(s_{g(t)}-\overline{s}_{g(t)} \Big) d{\mu}_{g(t)}dt \\
 &\leq& \frac{1}{2}{\int}^{\infty}_{0} {\int}_{X} \frac{\partial s_{g(t)}}{\partial t} d{\mu}_{g(t)}dt + \frac{1}{n} {\int}^{\infty}_{0} {\int}_{X}| s_{g(t)}| |s_{g(t)}-\overline{s}_{g(t)}| d{\mu}_{g(t)}dt \\
 &=& \frac{1}{2}{\int}^{\infty}_{0} \frac{\partial}{\partial t}\Big({\int}_{X} s_{g(t)} d{\mu}_{g(t)} \Big) dt + \frac{1}{n} {\int}^{\infty}_{0} {\int}_{X}| s_{g(t)}||s_{g(t)}-\overline{s}_{g(t)}| d{\mu}_{g(t)}dt \\
  &\leq& \frac{1}{2}{\int}^{\infty}_{0} \frac{\partial}{\partial t}\Big( \overline{s}_{g(t)}{vol}_{g(t)} \Big) dt + \frac{C}{n} {\int}^{\infty}_{0} {\int}_{X}|s_{g(t)}-\overline{s}_{g(t)}| d{\mu}_{g(t)}dt \\
   &=& \frac{{vol}_{g(0)}}{2}{\int}^{\infty}_{0} \frac{\partial}{\partial t}\overline{s}_{g(t)}dt + \frac{C}{n} {\int}^{\infty}_{0} {\int}_{X}|s_{g(t)}-\overline{s}_{g(t)}| d{\mu}_{g(t)}dt, 
\end{eqnarray*}
where we used a fact that $vol_{g(t)}=vol_{g(0)}$ holds for any $t \in [0, \infty)$. Hence we have 
\begin{eqnarray*}
{\int}^{\infty}_{0} {\int}_{X} |\stackrel{\circ}{r}_{g(t)}|^2 d{\mu}_{g(t)}dt &\leq& \frac{{vol}_{g(0)}}{2}{\int}^{\infty}_{0} \frac{\partial}{\partial t} \overline{s}_{g(t)} dt + \frac{C}{n} {\int}^{\infty}_{0} {\int}_{X}|s_{g(t)}-\overline{s}_{g(t)}| d{\mu}_{g(t)}dt \\
&\leq& \frac{{vol}_{g(0)}}{2}\lim_{t \rightarrow \infty} \sup|\overline{s}_{g(t)}-\overline{s}_{g(0)}| + \frac{C}{n} {\int}^{\infty}_{0} {\int}_{X}|s_{g(t)}-\overline{s}_{g(t)}| d{\mu}_{g(t)}dt. 
\end{eqnarray*}
On the other hand, the uniform bound $|{s}_{g(t)}|< C$ implies
\begin{eqnarray*}
|\overline{s}_{g(t)}|=|\frac{{\int}_{X} {s}_{g(t)} d{\mu}_{g(t)}}{vol_{{g(t)}}}|\leq \frac{{\int}_{X} |{s}_{g(t)}| d{\mu}_{g(t)}}{vol_{{g(t)}}} \leq \frac{{\int}_{X} {C} d{\mu}_{g(t)}}{vol_{{g(t)}}} = C. 
\end{eqnarray*}
This tells us that 
\begin{eqnarray*}
|\overline{s}_{g(t)}-\overline{s}_{g(0)}| \leq |\overline{s}_{g(t)}|+|\overline{s}_{g(0)}| \leq C+C = 2C. 
\end{eqnarray*}
Therefore, we are able to conclude that the following holds:
\begin{eqnarray*}
\sup|\overline{s}_{g(t)}-\overline{s}_{g(0)}| \leq 2C.
\end{eqnarray*}
Hence we obtain 
\begin{eqnarray*}
{\int}^{\infty}_{0} {\int}_{X} |\stackrel{\circ}{r}_{g(t)}|^2 d{\mu}_{g(t)}dt &\leq& \frac{{vol}_{g(0)}}{2} \cdot 2C + \frac{C}{n} {\int}^{\infty}_{0} {\int}_{X}|s_{g(t)}-\overline{s}_{g(t)}| d{\mu}_{g(t)}dt \\
&\leq& {vol}_{g(0)}C+ \frac{C}{n} {\int}^{\infty}_{0} {\int}_{X}|s_{g(t)}-\overline{s}_{g(t)}| d{\mu}_{g(t)}dt. 
\end{eqnarray*}
This estimate with the bound (\ref{fzz-key}) implies
\begin{eqnarray*}
{\int}^{\infty}_{0} {\int}_{X} |\stackrel{\circ}{r}_{g(t)}|^2 d{\mu}_{g(t)}dt < \infty 
\end{eqnarray*}
as promised. 
\end{proof}

As was already noticed in \cite{fz-1}, the bound (\ref{fzz-ricci}) tells us that, when $m \rightarrow \infty$, 
\begin{eqnarray}\label{fzz-ricci-0}
{\int}^{m+1}_{m} {\int}_{X} |\stackrel{\circ}{r}_{g(t)}|^2 d{\mu}_{g(t)}dt \longrightarrow 0 
\end{eqnarray}
 holds since ${\int}_{X} |\stackrel{\circ}{r}_{g(t)}|^2 d{\mu}_{g(t)} \geq 0$. Indeed, one can see this by completely elementary reasons. Particularly, in dimension $n=4$, this (\ref{fzz-ricci-0}) immediately implies the Fang-Zhang-Zhang inequality as follows (See also Lemma 3.2 in \cite{fz-1}):
\begin{thm}\label{fz-key}
Let $X$ be a closed oriented Riemannian 4-manifold and assume that there is a quasi-non-singular solution to the normalized Ricci flow in the sense of Definition \ref{bs}. Assume moreover that the solution satisfies the uniform bound (\ref{FZZ-s}), namely, 
\begin{eqnarray*}
\hat{s}_{g(t)} \leq -c <0
\end{eqnarray*}
holds, where the constant $c$ is independent of $t$ and define as $\hat{s}_{g} := \min_{x \in X}{s}_{g}(x)$ for a given Riemannian metric $g$. Then, $X$ must satisfy
\begin{eqnarray*}
2 \chi(X) \geq 3|\tau(X)|. 
\end{eqnarray*}
\end{thm}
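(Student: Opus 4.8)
The plan is to apply the Chern--Gauss--Bonnet/signature identity (\ref{4-im}) pointwise along the flow and then invoke the integral decay of the trace-free Ricci curvature furnished by Proposition \ref{FZZ-prop}. First I would observe that (\ref{4-im}) holds for \emph{every} Riemannian metric on $X$, so in particular for each $t \in [0,\infty)$ it applies to $g(t)$:
\[
2\chi(X) \pm 3\tau(X) = \frac{1}{4\pi^2}\int_X \Big(2|W^{\pm}_{g(t)}|^2 + \frac{s^2_{g(t)}}{24} - \frac{|\stackrel{\circ}{r}_{g(t)}|^2}{2}\Big)\, d\mu_{g(t)}.
\]
Since $2|W^{\pm}_{g(t)}|^2 + s^2_{g(t)}/24 \geq 0$ pointwise, discarding these nonnegative terms gives, for every $t$,
\[
2\chi(X) \pm 3\tau(X) \geq -\frac{1}{8\pi^2}\int_X |\stackrel{\circ}{r}_{g(t)}|^2\, d\mu_{g(t)}.
\]

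Next, since the left-hand side is a topological constant independent of $t$, I would average this inequality over a unit time interval $[m,m+1]$, obtaining
\[
2\chi(X) \pm 3\tau(X) \geq -\frac{1}{8\pi^2}\int_m^{m+1}\!\!\int_X |\stackrel{\circ}{r}_{g(t)}|^2\, d\mu_{g(t)}\,dt.
\]
Proposition \ref{FZZ-prop} guarantees that $\int_0^\infty \int_X |\stackrel{\circ}{r}_{g(t)}|^2 d\mu_{g(t)}\,dt < \infty$, and since the integrand is nonnegative its tail over $[m,m+1]$ tends to $0$ as $m\to\infty$ --- this is exactly (\ref{fzz-ricci-0}). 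Letting $m\to\infty$ in the displayed inequality therefore forces
\[
2\chi(X) \pm 3\tau(X) \geq 0
\]
for both choices of sign, which is precisely $2\chi(X) \geq 3|\tau(X)|$, as claimed.

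The real content has already been established in Lemma \ref{FZZ-lem} and Proposition \ref{FZZ-prop}; in this final step the only things worth checking are that (\ref{4-im}) is applied metric-by-metric along the flow, so that no regularity issue arises from the $t$-dependence of the volume form, and that the topological invariance of $2\chi \pm 3\tau$ is what allows a pointwise-in-$t$ curvature estimate to be upgraded to a statement about long-time averages, where the decay (\ref{fzz-ricci-0}) can be invoked. (Equivalently, one could simply pick a sequence $t_m\to\infty$ along which $\int_X|\stackrel{\circ}{r}_{g(t_m)}|^2 d\mu_{g(t_m)}\to 0$, which exists by finiteness of the total integral, and substitute $t=t_m$ directly.) Accordingly I do not expect any serious obstacle here; the difficulty of the theorem lies entirely in the earlier estimates leading to (\ref{fzz-ricci}).
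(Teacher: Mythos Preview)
Your proof is correct and follows essentially the same route as the paper: apply the identity (\ref{4-im}) metric-by-metric along the flow, average over $[m,m+1]$, and use (\ref{fzz-ricci-0}) to kill the trace-free Ricci term in the limit. The only cosmetic difference is that the paper retains the nonnegative terms $2|W^{\pm}|^2 + s^2/24$ through the limit before discarding them, whereas you drop them at the outset; either way the conclusion $2\chi \pm 3\tau \geq 0$ follows immediately.
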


\begin{proof}
Suppose that there exists a quasi-non-singular solution $\{g(t) \}$, $t \in [0, \infty)$, to the normalized Ricci flow on $X$. Assume also that the bound (\ref{FZZ-s}) is satisfied. By the equality (\ref{4-im}) which holds for any Riemannian metric on $X$, we are able to get 
\begin{eqnarray*}
2\chi(X) \pm 3\tau(X) = \frac{1}{4{\pi}^2}{\int}_{X}\Big(2|W^{\pm}_{g(t)}|^2+\frac{{s}^2_{g(t)}}{24}-\frac{|{r}^{\circ}_{g(t)}|^2}{2} \Big) d{\mu}_{g(t)}.  
\end{eqnarray*}
From this and (\ref{fzz-ricci-0}), we are able to obtain 
\begin{eqnarray*}
2\chi(X) \pm 3\tau(X) &=& {\int}^{m+1}_{m} \Big(2\chi(X) \pm 3\tau(X) \Big)dt \\
&=& \frac{1}{4{\pi}^2}{\int}^{m+1}_{m} {\int}_{X}\Big(2|W^{\pm}_{g(t)}|^2+\frac{{s}^2_{g(t)}}{24}-\frac{|{r}^{\circ}_{g(t)}|^2}{2} \Big) d{\mu}_{g(t)}dt \\
&\geq & \liminf_{m \longrightarrow \infty}\frac{1}{4{\pi}^2}{\int}^{m+1}_{m} {\int}_{X}\Big(2|W^{\pm}_{g(t)}|^2+\frac{{s}^2_{g(t)}}{24}-\frac{|{r}^{\circ}_{g(t)}|^2}{2} \Big) d{\mu}_{g(t)}dt \\
&=& \liminf_{m \longrightarrow \infty}\frac{1}{4{\pi}^2}{\int}^{m+1}_{m} {\int}_{X}\Big(2|W^{\pm}_{g(t)}|^2+\frac{{s}^2_{g(t)}}{24}\Big) d{\mu}_{g(t)}dt \geq 0.  
\end{eqnarray*}
We therefore get the desired inequality. 
\end{proof}

\section{Fang-Zhang-Zhang Inequality and Negativity of the Yamabe Invariant}\label{ya}

In this section, we shall improve the FZZ inequality under an assumption that the Yamabe invariant of a given 4-manifold is negative. This motivates partially Problem \ref{Q} which was already mentioned in Introduction. The main result of this section is Theorem \ref{bound-four} below. \par 
Suppose now that $X$ is a closed oriented Riemannian manifold of dimension $n\geq 3$, and moreover that $[g]=\{ ug ~|~u: X \to {\Bbb R}^+\}$ is the conformal class of an arbitrary metric $g$. Trudinger, Aubin, and Schoen \cite{aubyam,lp,rick,trud} proved every conformal class on any smooth compact manifold contains a Riemannian metric of constant scalar curvature. Such a metric $\hat{g}$ can be constructed by minimizing the Einstein-Hilbert functional:
$$
\hat{g}\mapsto  \frac{\int_X 
s_{\hat{g}}~d\mu_{\hat{g}}}{\left(\int_X 
d\mu_{\hat{g}}\right)^{\frac{n-2}{n}}},
$$
among all metrics conformal to $g$. Notice that, by setting $\hat{g} = u^{4/(n-2)}g$, we have the following identity:
$$
\frac{\int_X 
s_{\hat{g}}~d\mu_{\hat{g}}}{\left(\int_X 
d\mu_{\hat{g}}\right)^{\frac{n-2}{n}}}= 
\frac{\int_X\left[ s_gu^2 +
4 \frac{n-1}{n-2}|\nabla u|^2\right] d\mu_g}{\left(\int_X  u^{2n/(n-2)}d\mu_g\right)^{(n-2)/n}}. 
$$
As was already mentioned in Introduction, associated to each conformal class $[g]$, we are able to define the following  number which is called Yamabe constant of the conformal class $[g]$:
$$
Y_{[g]} = \inf_{\hat{g} \in [g]}\frac{\int_X s_{\hat{g}}~d\mu_{\hat{g}}}{\left(\int_X 
d\mu_{\hat{g}}\right)^{\frac{n-2}{n}}}. 
$$
Equivalently, 
$$
Y_{[g]} = \inf_{u \in {C}^{\infty}_{+}(X)}\frac{\int_X\left[ s_gu^2 +
4 \frac{n-1}{n-2}|\nabla u|^2\right] d\mu_g}{\left(\int_X  u^{2n/(n-2)}d\mu_g\right)^{(n-2)/n}}, 
$$
where ${C}^{\infty}_{+}(X)$ is the set of all positive functions $u: X \to {\Bbb R}^+$. Kobayashi \cite{kob} and Schoen \cite{sch} independently introduced the following interesting invariant which is now called Yamabe invariant of $X$:
\begin{eqnarray}\label{yama-def-1}
{\mathcal Y}(X) = \sup_{[g] \in \mathcal{C}} Y_{[g]}, 
\end{eqnarray}
where $\mathcal{C}$ is the set of all conformal classes on $X$. This is a diffeomorphism invariant of $X$. Notice again that ${\mathcal Y}(X) \leq 0$ if and only if $X$ does not admit Riemannian metrics of positive scalar curvature. In this case, it is also known that the Yamabe invariant of $X$ can be rewritten as 
\begin{eqnarray}\label{yama-def-2}
{\mathcal Y}(X) = - \Big(\inf_{g}{\int}_{X}|s_{g}|^{{n}/{2}} d{\mu}_{g} \Big)^{{2}/{n}}, 
\end{eqnarray}
where supremum is taken over all smooth metrics $g$ on $X$. For instance, see Proposition 12 in \cite{ishi-leb-2}. In dimension 4, it is known that there are quite many manifolds whose Yamabe invariants are strictly negative \cite{leb-4, ishi-leb-2}. \par
For any Riemannian metric $g$, consider the minimum $\hat{s}_{g}:=\min_{x \in X}{s}_{g}(x)$ of the scalar curvature ${s}_{g}$ of the metric $g$ as before. In Theorem 2.1 in \cite{ha-1}, Hamilton pointed out that the minimum $\hat{s}_{g}$ is increasing along the normalized Ricci flow when it is non-positive. Hence, it may be interesting to give an upper bound to the quantity. We shall give the following upper bound in terms of the Yamabe invariant. This result is simple, but important for our purpose:
\begin{prop}\label{yama-b}
Let $X$ be a closed oriented Riemannian manifold of dimension $n \geq 3$ and assume that the Yamabe invariant of $X$ is negative, i.e., ${\mathcal Y}(X)<0$. If there is a solution $\{g(t)\}$, $t \in [0, T)$, to the normalized Ricci flow, then the solution satisfies the bound (\ref{FZZ-s}). More precisely, the following is satisfied:
\begin{eqnarray*}
\hat{s}_{g(t)}:=\min_{x \in X}{s}_{g(t)}(x) \leq \frac{{\mathcal Y}(X)}{(vol_{g(0)})^{2/n}} < 0. 
\end{eqnarray*}
\end{prop}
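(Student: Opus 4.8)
The plan is to compare the infimum of the scalar curvature of $g(t)$ with the Yamabe constant of the conformal class $[g(t)]$, and then to use the fact that the volume is preserved along the normalized Ricci flow. The starting observation is that, since ${\mathcal Y}(X)<0$, the manifold $X$ admits no metric of positive scalar curvature, so in particular every metric $g(t)$ has $Y_{[g(t)]}\le 0$; moreover the characterization \eqref{yama-def-2} of the Yamabe invariant in the non-positive case is available. First I would record the elementary pointwise-to-integral bound: for any Riemannian metric $h$ on $X$ one has
\begin{eqnarray*}
\hat{s}_{h}\,(vol_{h})^{2/n}
= \hat{s}_{h}\Big(\int_X d\mu_h\Big)^{2/n}
\le \frac{\int_X s_h\, d\mu_h}{\big(\int_X d\mu_h\big)^{(n-2)/n}},
\end{eqnarray*}
simply because $s_h\ge \hat{s}_h$ everywhere; the right-hand side is the Einstein–Hilbert quotient of $h$ itself, hence is $\ge Y_{[h]}$ only when one minimizes, so one must be slightly careful about the direction. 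The cleaner route is: the right-hand side above, evaluated at a constant-scalar-curvature (Yamabe minimizer) metric $\hat{h}\in[h]$, equals $Y_{[h]}$, and at that metric $\hat{s}_{\hat h}(vol_{\hat h})^{2/n}= Y_{[h]}$ as well since $s_{\hat h}$ is constant. So I would instead argue directly at the level of $g(t)$.

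The key step is the following chain, applied to $h=g(t)$. Since $Y_{[g(t)]}\le {\mathcal Y}(X)$ by definition of the Yamabe invariant as a supremum over conformal classes, and since $Y_{[g(t)]}\le 0$, I can take a Yamabe minimizer $\tilde g$ in the conformal class $[g(t)]$; it has constant scalar curvature $s_{\tilde g}\equiv Y_{[g(t)]}/(vol_{\tilde g})^{2/n}\le 0$. Now the minimum of the scalar curvature over a conformal class of fixed... the subtlety is that $\hat{s}_{g(t)}$ need not equal $\hat{s}_{\tilde g}$. The standard fact I would invoke here is that, when the Yamabe constant of a conformal class is non-positive, one has
\begin{eqnarray*}
\hat{s}_{g(t)}\,(vol_{g(t)})^{2/n}\ \le\ Y_{[g(t)]},
\end{eqnarray*}
which follows because $\int_X s_{g(t)}\,d\mu_{g(t)} \ge \hat{s}_{g(t)}\,vol_{g(t)}$ together with the sign of $Y_{[g(t)]}$ and the conformal invariance of the normalized total scalar curvature infimum; concretely, testing the Yamabe quotient with $u\equiv 1$ gives $Y_{[g(t)]}\le \int_X s_{g(t)}\,d\mu_{g(t)}/(vol_{g(t)})^{(n-2)/n}$ only as an upper bound for the infimum, so to get the inequality in the needed direction I would instead use \eqref{yama-def-2}: $|{\mathcal Y}(X)|^{n/2}\le \int_X |s_{g(t)}|^{n/2}\,d\mu_{g(t)}$, and when $Y_{[g(t)]}\le 0$ one additionally has $\hat s_{g(t)}\le 0$, so $|s_{g(t)}|^{n/2}$ is controlled by $|\hat s_{g(t)}|^{n/2}$ is false pointwise — hence the honest argument is via Hölder on the set where $s_{g(t)}<0$.

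Putting it together: because ${\mathcal Y}(X)<0$ there is no PSC metric, so $\hat s_{g(t)}\le 0$ for every $t$ (indeed $\int_X s_{g(t)}\le 0$, and if $\hat s_{g(t)}>0$ then $g(t)$ would be PSC); assuming $\hat s_{g(t)}\le 0$ everywhere, apply \eqref{yama-def-2} with the metric $g(t)$ and estimate
\begin{eqnarray*}
|{\mathcal Y}(X)|^{n/2}\ \le\ \int_X |s_{g(t)}|^{n/2}\,d\mu_{g(t)}\ \le\ |\hat s_{g(t)}|^{n/2}\, vol_{g(t)}\ =\ |\hat s_{g(t)}|^{n/2}\, vol_{g(0)},
\end{eqnarray*}
where the middle inequality uses $|s_{g(t)}|\le|\hat s_{g(t)}|$ on all of $X$ (valid precisely because $\hat s_{g(t)}\le s_{g(t)}\le 0$ would be needed — so I should first argue $s_{g(t)}\le 0$ everywhere, which is not automatic and is the genuine obstacle). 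The main obstacle is therefore controlling the sign of $s_{g(t)}$: I expect one must invoke Hamilton's observation (Theorem 2.1 of \cite{ha-1}) that $\hat s_{g(t)}$ is non-decreasing while non-positive, combined with the fact that under ${\mathcal Y}(X)<0$ the flow can never reach a PSC metric, to conclude $\hat s_{g(t)}<0$; and then a localized Hölder/Yamabe argument on $\{s_{g(t)}<0\}$ — or, more simply, the observation that $Y_{[g(t)]}\ge \hat s_{g(t)}(vol_{g(t)})^{2/n}$ is the wrong direction whereas the correct one, $\hat s_{g(t)}(vol_{g(t)})^{2/n}\le Y_{[g(t)]}\le {\mathcal Y}(X)$, comes from dividing the trivial bound $\hat s_{g(t)}\,vol_{g(t)}\le \int_X s_{g(t)}\,d\mu_{g(t)}$ by $(vol_{g(t)})^{(n-2)/n}$ and comparing with the Yamabe minimizer. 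Rescaling $vol_{g(t)}=vol_{g(0)}$ then yields exactly
\begin{eqnarray*}
\hat s_{g(t)}\ \le\ \frac{{\mathcal Y}(X)}{(vol_{g(0)})^{2/n}}\ <\ 0,
\end{eqnarray*}
which is the assertion, and in particular the uniform bound \eqref{FZZ-s} holds with $c = |{\mathcal Y}(X)|/(vol_{g(0)})^{2/n}$.
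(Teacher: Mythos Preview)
Your proposal never closes the argument. You correctly observe that the trivial bound
\[
\hat{s}_{g(t)}\,vol_{g(t)} \le \int_X s_{g(t)}\,d\mu_{g(t)}
\]
only yields $\hat{s}_{g(t)}(vol_{g(t)})^{2/n}\le \mathrm{EH}(g(t))$, and that $\mathrm{EH}(g(t))\ge Y_{[g(t)]}$ is the wrong direction; you also correctly identify that the route via \eqref{yama-def-2} would require $s_{g(t)}\le 0$ everywhere, which is not available. But your final paragraph then simply asserts the inequality $\hat{s}_{g(t)}(vol_{g(t)})^{2/n}\le Y_{[g(t)]}$ ``by comparing with the Yamabe minimizer'' without explaining how the Yamabe minimizer, whose scalar curvature is unrelated to $\hat{s}_{g(t)}$, yields this. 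You have diagnosed the obstacle but not removed it.

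The missing idea is to work inside the variational definition of $Y_{[g(t)]}$ rather than with any single metric. For every test function $u>0$ one has $s_{g(t)}u^2\ge \hat{s}_{g(t)}u^2$ pointwise, so
\[
Y_{[g(t)]}=\inf_u\frac{\int_X\big(s_{g(t)}u^2+4\tfrac{n-1}{n-2}|\nabla u|^2\big)\,d\mu_{g(t)}}{\big(\int_X u^{2n/(n-2)}\,d\mu_{g(t)}\big)^{(n-2)/n}}
\;\ge\;
\inf_u\frac{\int_X\big(\hat{s}_{g(t)}u^2+4\tfrac{n-1}{n-2}|\nabla u|^2\big)\,d\mu_{g(t)}}{\big(\int_X u^{2n/(n-2)}\,d\mu_{g(t)}\big)^{(n-2)/n}}.
\]
If $\hat{s}_{g(t)}\ge 0$ the right side is $\ge 0$, forcing ${\mathcal Y}(X)\ge Y_{[g(t)]}\ge 0$, a contradiction; hence $\hat{s}_{g(t)}<0$. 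Now drop the gradient term and use H\"older, $\int u^2\le(\int u^{2n/(n-2)})^{(n-2)/n}(vol_{g(t)})^{2/n}$, together with $\hat{s}_{g(t)}<0$ (which flips the inequality upon multiplication), to get
\[
{\mathcal Y}(X)\ \ge\ Y_{[g(t)]}\ \ge\ \hat{s}_{g(t)}\,(vol_{g(t)})^{2/n}\ =\ \hat{s}_{g(t)}\,(vol_{g(0)})^{2/n}.
\]
This is exactly the paper's argument, and it bypasses both obstacles you encountered: no sign assumption on $s_{g(t)}$ is needed, and the inequality goes the right way because you bound the \emph{infimum} from below termwise in $u$, not the value at a single metric.
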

\begin{proof}
Suppose that there is a solution $\{g(t)\}$, $t \in [0, T)$ to the normalized Ricci flow. Let us consider the Yamabe constant $Y_{[g(t)]}$ of a conformal class $[g(t)]$ of a metric $g(t)$ for any $t \in [0, T)$. By definition, we have
$$
{\mathcal Y}(X) \geq Y_{[g(t)]} = \inf_{u \in {C}^{\infty}_{+}(X)}\frac{\int_X\left[ s_{g(t)}u^2 +
4 \frac{n-1}{n-2}|\nabla u|^2\right] d\mu_{g(t)}}{\left(\int_X  u^{2n/(n-2)}d\mu_{g(t)}\right)^{(n-2)/n}}.
$$
We therefore obtain
\begin{eqnarray*}
{\mathcal Y}(X) &\geq& \inf_{u \in {C}^{\infty}_{+}(X)}\frac{{\int}_{X} \Big(\displaystyle\min_{x \in X}{s}_{g(t)}u^2 + 4\frac{n-1}{n-2}|\nabla u|^2 \Big) d{\mu}_{g(t)}}{\Big({\int}_{X}u^{{2n}/{(n-2)}} d{\mu}_{g(t)}\Big)^{{n-2}/{n}}} \\
&\geq& \hat{s}_{g(t)} \Big(\inf_{u \in {C}^{\infty}_{+}(X)}\frac{{\int}_{X} u^2 d{\mu}_{g(t)}}{\Big({\int}_{X}u^{{2n}/{(n-2)}} d{\mu}_{g(t)}\Big)^{{n-2}/{n}}} \Big).
\end{eqnarray*}
where notice that $\hat{s}_{g} := \min_{x \in X}{s}_{g}(x)$. If $\hat{s}_{g(t)} \geq 0$ holds, then the above estimate tells us that ${\mathcal Y}(X) \geq 0$. Since we assume that ${\mathcal Y}(X) < 0$, we are able to conclude that $\hat{s}_{g(t)} < 0$ must hold. \par
On the other hand, the H{\"{o}}lder inequality tells us that the following inequality holds:
\begin{eqnarray*}
{\int}_{X} u^2 d{\mu}_{g(t)} &\leq& \Big({\int}_{X} u^{2n/n-2} d{\mu}_{g(t)} \Big)^{{n-2}/{n}}\Big({\int}_{X} d{\mu}_{g(t)}  \Big)^{2/n} \\
&=& \Big({\int}_{X} u^{2n/n-2} d{\mu}_{g(t)} \Big)^{{n-2}/{n}}{(vol_{g(t)})^{2/n}}. 
\end{eqnarray*}
This implies that
\begin{eqnarray*}
\inf_{u \in {C}^{\infty}_{+}(X)}\frac{{\int}_{X} u^2 d{\mu}_{g(t)}}{\Big({\int}_{X} u^{2n/n-2} d{\mu}_{g(t)} \Big)^{{n-2}/{n}}} \leq {(vol_{g(t)})^{2/n}}.
\end{eqnarray*}
Since we have $\hat{s}_{g(t)} < 0$, this also implies
\begin{eqnarray*}
\hat{s}_{g(t)}\Big( \inf_{u \in {C}^{\infty}_{+}(X)}\frac{{\int}_{X} u^2 d{\mu}_{g(t)}}{\Big({\int}_{X} u^{2n/n-2} d{\mu}_{g(t)} \Big)^{{n-2}/{n}}} \Big) \geq \hat{s}_{g(t)}{(vol_{g(t)})^{2/n}}. 
\end{eqnarray*}
We therefore obtain
\begin{eqnarray*}
{\mathcal Y}(X) &\geq&  \hat{s}_{g(t)}\Big( \inf_{u \in {C}^{\infty}_{+}(X)}\frac{{\int}_{X} u^2 d{\mu}_{g(t)}}{\Big({\int}_{X} u^{2n/n-2} d{\mu}_{g(t)} \Big)^{{n-2}/{n}}} \Big) \\
&\geq& \hat{s}_{g(t)}{(vol_{g(t)})^{2/n}}. 
\end{eqnarray*}
On the other hand, notice that the normalized Ricci flow preserves the volume of the solution. We therefore have $vol_{g(t)}=vol_{g(0)}$ for ant $t \in [0, T)$. Hence, we get the desired bound for any $t \in [0, T)$:
\begin{eqnarray*}
\hat{s}_{g(t)} \leq \frac{{\mathcal Y}(X)}{(vol_{g(t)})^{2/n}}=\frac{{\mathcal Y}(X)}{(vol_{g(0)})^{2/n}} < 0. 
\end{eqnarray*}
In particular, the solution $\{g(t)\}$ satisfies the bound (\ref{FZZ-s}) by setting $-c={{\mathcal Y}(X)}/{(vol_{g(0)})^{2/n}}$. 
\end{proof} 

The following theorem is the main result of this section. Let inj$(x, g)$ be the injectivity radius of the metric $g$ at $x \in X$. Recall that, following \cite{c-g, fz-1}, a solution $\{g(t)\}$ to the normalized Ricci flow on a Riemannian manifold $X$ is called {\it collapse} if there is a sequence of times $t_{k} \rightarrow T$ such that $\sup_{x \in X}$inj$(x, g(t_{k})) \rightarrow 0$, where $T$ is the maximal existence time for the solution, which may be finite or infinite:  
\begin{thm}\label{bound-four}
Let $X$ be a closed oriented Riemannian 4-manifold. Suppose that there is a quasi-non-singular solution $\{g(t)\}$, $t \in [0, \infty)$, to the normalized Ricci flow in the sense of Definition \ref{bs}. If the Yamabe invariant of $X$ is negative, i.e., ${\mathcal Y}(X)<0$, then the following holds:
\begin{eqnarray*}
2 \chi(X) -3|\tau(X)| \geq \frac{1}{96{\pi}^2}|{\mathcal Y}(X)|^2. 
\end{eqnarray*}
In particular, $X$ must satisfy the strict FZZ inequality
\begin{eqnarray*}
2 \chi(X) > 3|\tau(X)| 
\end{eqnarray*}
in this case. Moreover, if the solution is non-singular in the sense of Definition \ref{non-sin}, then the solution does not collapse. 
\end{thm}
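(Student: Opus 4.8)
The plan is to combine the integral estimate on the trace-free Ricci curvature from Proposition \ref{FZZ-prop} with the Gauss--Bonnet/signature identity (\ref{4-im}), now using the sharp lower bound on $\int s^2$ coming from the negativity of the Yamabe invariant. First I would invoke Proposition \ref{yama-b}: since ${\mathcal Y}(X)<0$, every solution to the normalized Ricci flow automatically satisfies the uniform bound (\ref{FZZ-s}), so the hypotheses of Lemma \ref{FZZ-lem} and Proposition \ref{FZZ-prop} hold, and I get $\int_0^\infty\int_X|\ro_{g(t)}|^2\,d\mu_{g(t)}\,dt<\infty$; hence there is a sequence $t_k\to\infty$ with $\int_X|\ro_{g(t_k)}|^2\,d\mu_{g(t_k)}\to 0$.

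Next I would write the identity (\ref{4-im}) for $g=g(t_k)$:
\begin{eqnarray*}
2\chi(X)-3|\tau(X)| = \frac{1}{4\pi^2}\int_X\Big(2|W^{\mp}_{g(t_k)}|^2+\frac{s_{g(t_k)}^2}{24}-\frac{|\ro_{g(t_k)}|^2}{2}\Big)d\mu_{g(t_k)},
\end{eqnarray*}
where the sign is chosen so that $\mp 3\tau(X)=-3|\tau(X)|$. Dropping the nonnegative Weyl term and using the Cauchy--Schwarz (equivalently Hölder) bound $\int_X s_{g(t_k)}^2\,d\mu_{g(t_k)}\ge \big(\int_X|s_{g(t_k)}|^2\,d\mu_{g(t_k)}\big)$ together with the characterization (\ref{yama-def-2}) of the Yamabe invariant, namely ${\mathcal Y}(X)=-\big(\inf_g\int_X|s_g|^2\,d\mu_g\big)^{1/2}$ in dimension $4$, I obtain $\int_X s_{g(t_k)}^2\,d\mu_{g(t_k)}\ge |{\mathcal Y}(X)|^2$ for every $k$. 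Therefore
\begin{eqnarray*}
2\chi(X)-3|\tau(X)| \ge \frac{1}{4\pi^2}\Big(\frac{|{\mathcal Y}(X)|^2}{24}-\frac12\int_X|\ro_{g(t_k)}|^2\,d\mu_{g(t_k)}\Big),
\end{eqnarray*}
and letting $k\to\infty$ kills the last term, giving $2\chi(X)-3|\tau(X)|\ge \frac{1}{96\pi^2}|{\mathcal Y}(X)|^2>0$, which is the asserted inequality and in particular the strict FZZ inequality.

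For the final (non-collapsing) statement, I would argue by contradiction: suppose the non-singular solution collapses, i.e. there is $t_k\to\infty$ with $\sup_{x\in X}\mathrm{inj}(x,g(t_k))\to 0$. Since the solution is non-singular, the full Riemann curvature is uniformly bounded, $|Rm_{g(t_k)}|\le \Lambda$, so one is in the bounded-curvature collapsing situation; by Cheeger--Gromov--Fukaya collapsing theory (as used in \cite{c-g, fz-1}) a manifold admitting metrics of bounded curvature and injectivity radius going to zero collapses with bounded curvature, which forces $\int_X s_{g(t_k)}^2\,d\mu_{g(t_k)}\to 0$ (indeed all curvature $L^2$-norms collapse to zero as the volume does), contradicting the lower bound $\int_X s_{g(t_k)}^2\,d\mu_{g(t_k)}\ge|{\mathcal Y}(X)|^2>0$ just established. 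Hence the solution cannot collapse.

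The main obstacle I expect is the non-collapsing step: turning "collapse + bounded curvature" into the quantitative statement that the $L^2$-norm of the scalar curvature tends to zero requires the F-structure/nilpotent Killing structure technology of Cheeger--Gromov--Fukaya, and one must be careful that the collapse hypothesis is stated along a sequence $t_k$ rather than uniformly. The cleanest route is probably to cite the relevant collapsing result from \cite{fz-1} (or \cite{c-g}) directly: with $|Rm_{g(t_k)}|$ bounded and injectivity radii shrinking, $vol_{g(t_k)}\to 0$, but $vol_{g(t_k)}=vol_{g(0)}$ is constant along the normalized flow — already a contradiction — so in fact the argument is even simpler than a full curvature-norm estimate, and the volume-preservation of the normalized flow does all the work. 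I would present the contradiction in that streamlined form.
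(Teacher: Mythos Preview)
Your argument for the main inequality is essentially the paper's: you invoke Proposition~\ref{yama-b} to get the bound (\ref{FZZ-s}), feed this into Proposition~\ref{FZZ-prop}, extract a sequence along which $\int_X|\ro|^2\to 0$, and combine the Gauss--Bonnet/signature identity with the $n=4$ formula $|{\mathcal Y}(X)|^2=\inf_g\int_X s_g^2\,d\mu_g$. The paper integrates over $[m,m+1]$ and takes a $\liminf$ instead of choosing a sequence $t_k$, but this is cosmetic. (Your parenthetical ``Cauchy--Schwarz bound $\int s^2\ge\int|s|^2$'' is a tautology and should simply be deleted; the content you actually use is the Yamabe characterization, and that step is fine.)

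The non-collapsing step, however, has a real gap. Your ``streamlined'' contradiction claims that bounded curvature together with $\sup_x\mathrm{inj}(x,g(t_k))\to 0$ forces $vol_{g(t_k)}\to 0$. This is false: take $\Sigma\times S^1_\epsilon\times S^1_{1/\epsilon}$ with $\Sigma$ a fixed hyperbolic surface. The sectional curvature is bounded (independent of $\epsilon$), the injectivity radius is $\le\pi\epsilon\to 0$, yet the volume is constant. The same example shows your earlier claim that ``all curvature $L^2$-norms collapse to zero'' fails as well, since $\int s^2\,d\mu=4\cdot vol$ there is constant. So neither the volume contradiction nor the $\int s^2\to 0$ contradiction goes through.

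The paper's route is different and is the one you should use: Cheeger--Gromov's theorem \cite{c-g} says that a closed manifold which collapses with bounded sectional curvature must have $\chi(X)=0$. But you have just proved $2\chi(X)>3|\tau(X)|\ge 0$, hence $\chi(X)>0$, and that is the contradiction. In other words, the obstruction to collapse is topological ($\chi\ne 0$), not metric (volume or $\int s^2$), and the strict FZZ inequality you established in the first part is exactly what supplies it.
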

\begin{proof}
Suppose that there is a quasi-non-singular solution $\{g(t)\}$, $t \in [0, \infty)$, to the normalized Ricci flow. By Proposition \ref{yama-b} and the assumption that ${\mathcal Y}(X)<0$, the solution automatically satisfies
\begin{eqnarray*}
\hat{s}_{g(t)} \leq \frac{{\mathcal Y}(X)}{(vol_{g(0)})^{1/2}} < 0. 
\end{eqnarray*}
In particular, the solution satisfies the bound (\ref{FZZ-s}). By the proof of Theorem \ref{fz-key} above, we are able to obtain the following bound because there is a quasi-non-singular solution with the uniform bound (\ref{FZZ-s}):
\begin{eqnarray*}
2\chi(X) \pm 3\tau(X) &\geq&  \liminf_{m \longrightarrow \infty}\frac{1}{4{\pi}^2}{\int}^{m+1}_{m} {\int}_{X}\Big(2|W^{\pm}_{g(t)}|^2+\frac{{s}^2_{g(t)}}{24}\Big) d{\mu}_{g(t)}dt \\
&\geq&\ \liminf_{m \longrightarrow \infty}\frac{1}{96{\pi}^2}{\int}^{m+1}_{m} {\int}_{X}{{s}^2_{g(t)}} d{\mu}_{g(t)}dt.  
\end{eqnarray*}
On the other hand, we have the equality (\ref{yama-def-2}) under ${\mathcal Y}(X)<0$. In case where $n=4$, this tells us that
\begin{eqnarray*}
|{\mathcal Y}(X)|^2 = \inf_{g}{\int}_{X}s^{2}_{g} d{\mu}_{g}. 
\end{eqnarray*}
We therefore have
\begin{eqnarray*}
2\chi(X) \pm 3\tau(X) &\geq&  \liminf_{m \longrightarrow \infty}\frac{1}{96{\pi}^2}{\int}^{m+1}_{m} {\int}_{X}{{s}^2_{g(t)}} d{\mu}_{g(t)}dt \\
 &\geq& \frac{1}{96{\pi}^2}|{\mathcal Y}(X)|^2.
\end{eqnarray*}
Since $|{\mathcal Y}(X)| \not=0$, we particularly obtain $2 \chi(X) > 3|\tau(X)|$ as desired. On the other hand, as was already used in \cite{ha-1} and \cite{fz-1}, Cheeger-Gromov's collapsing theorem \cite{c-g} tells us that $X$ must satisfy $\chi(X) = 0$ if it collapses with bounded sectional curvature. However, $X$ now satisfies $2 \chi(X) > 3|\tau(X)|$ and hence $\chi(X) \not=0$. Therefore, we are able to conclude that if the solution is non-singular in the sense of Definition \ref{non-sin}, then the solution does not collapse.
\end{proof} 

\begin{rmk}
It is a natural question to ask whether or not a similar bound holds in the case where ${\mathcal Y}(X) \geq 0$. Suppose now that a given closed 4-manifold $X$ has ${\mathcal Y}(X) > 0$. Notice that the positivity of the Yamabe invariant of $X$ implies the existence of a Riemannian metric $g$ of positive scalar curvature on $X$. According to Proposition 2.2 in \cite{fz-1}, any non-singular solution to the normalized Ricci flow on $X$ with the positive scalar curvature metric $g$ as an initial metric always converges along a subsequence of times to shrinking Ricci soliton $h$. If the $h$ is a {\it gradient} shrinking Ricci soliton, the following bound (\ref{soli}) is known. In this case, there are smooth function $f$ and positive constant $\lambda >0$ satisfying
\begin{eqnarray*}
{Ric}_{h}={\lambda}h+{D}^2f, 
\end{eqnarray*}
where ${D}^2f$ is the Hessian of the Ricci potential function $f$ with respect to $h$. Under the following constraint on the Ricci potential function $f$
\begin{eqnarray*}
{\int}_{X}f{d}{\mu}_{h}=0,  
\end{eqnarray*}
one can see that the following bound holds from the proof of the main theorem of Ma \cite{li-ma}:
\begin{eqnarray}\label{soli}
2\chi(X)-{3}|\tau(X)| \geq \frac{1}{48{\pi}^2}{\mathcal A}_{\frac{3}{2}}(X, h). 
\end{eqnarray}
Here, for any positive constant $a$, define as
\begin{eqnarray*}
{\mathcal A}_{a}(X, h):=a \Big(\frac{({\int}_{X}s_{h} d{\mu}_{h})^2}{vol_{h}}\Big)-{\int}_{X}{s}_{h}^2d{\mu}_{h}. 
\end{eqnarray*}
Notice that, by the Schwarz inequality, ${\mathcal A}_{1}(X, h) \leq 0$ holds. ${\mathcal A}_{\frac{3}{2}}(X, h) \geq 0$ is equivalent to ${\int}_{X}s^2_{h} d{\mu}_{h} \leq 24{\lambda}^2{vol_{h}}$. See the bound (1) in the main theorem of Ma \cite{li-ma}. We also notice that there is a conjecture of Hamilton which asserts that any compact gradient shrinking Ricci soliton with positive curvature operator must be Einstein. See an interesting article of Cao \cite{cao-X} including a partial affirmative answer under a certain integral inequality concerning the Ricci soliton. 
\end{rmk}

On the other hand, let us next recall the definition of Pelerman's $\bar{\lambda}$ invariant \cite{p-1, p-2, lott} briefly. We shall firstly recall an entropy functional which is so called ${\cal F}$-functional introduced and investigated by Perelman \cite{p-1}. Let $X$ be a closed oriented Riemannian manifold of dimension $n$ and $g$ any Riemannian metric on $X$. We shall denote the space of all Riemannian metrics on $X$ by ${\cal R}_{X}$ and the space of all $C^{\infty}$ functions on $X$ by $C^{\infty}(X)$. Then ${\cal F}$-functional is the following functional ${\cal F} : {\cal R}_{X} \times C^{\infty}(X) \rightarrow {\mathbb R}$ defined by
\begin{eqnarray*}
{\cal F}(g, f):={\int}_{X}({s}_{g} + |{\nabla}f|^{2}){e}^{-f} d\mu_{g}, 
\end{eqnarray*}
where $f \in C^{\infty}(X)$, ${s}_{g}$ is again the scalar curvature and $d\mu_{g}$ is the volume measure with respect to $g$. It is then known that, for a given metric $g$, there exists a unique minimizer of the ${\cal F}$-functional under the constraint ${\int}_{X}{e}^{-f} d\mu_{g} =1$. Hence it is so natural to consider the following which is so called Perelman $\lambda$-functional: 
\begin{eqnarray*}
{{\lambda}}_g:=\inf_{f} \ \{ {\cal F}(g, f) \ | \ {\int}_{X}{e}^{-f} d\mu_{g} =1 \}. 
\end{eqnarray*}
It turns out that $\lambda_g$ is nothing but the least eigenvalue of the elliptic operator $4 \Delta_g+s_g$, where $\Delta = d^*d= - \nabla\cdot\nabla $ is the positive-spectrum  Laplace-Beltrami operator associated with $g$. Consider the scale-invariant quantity $\lambda_g (vol_g)^{2/n}$. Then Perelman's $\bar{\lambda}$ invariant of $X$ is defined to be 
\begin{eqnarray*}\label{p-inv}
\bar{\lambda}(X)= \sup_g \lambda_g (vol_g)^{2/n}, 
\end{eqnarray*}
where supremum is taken over all smooth metrics $g$ on $X$. This quantity is closely related to the Yamabe invariant. In fact, the following result holds:
\begin{thm}[\cite{A-ishi-leb-3}]\label{AIL}
Let $X$ be a closed oriented Riemannian $n$-manifold, $n\geq 3$. Then 
\begin{eqnarray*}
\bar{\lambda}(X) = \begin{cases}
     {\mathcal Y}(X) & \text{ if  } {\mathcal Y}(X) \leq 0, \\
     +\infty  & \text{ if  } {\mathcal Y}(X) >  0.
\end{cases}
\end{eqnarray*}
\end{thm}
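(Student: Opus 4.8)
The plan is to compare the Rayleigh quotient defining $\lambda_g$, whose Dirichlet term carries the coefficient $4$, with the Yamabe functional, whose Dirichlet term carries the larger coefficient $a_n := 4(n-1)/(n-2) \ge 4$, and to split the argument according to the sign of $\mathcal Y(X)$. Throughout I would use the variational characterization already noted, namely that $\lambda_g$ is the least eigenvalue of $4\Delta_g + s_g$, equivalently $\lambda_g = \inf_{u \not\equiv 0} \big( \int_X (4|\nabla u|^2 + s_g u^2)\, d\mu_g \big) / \int_X u^2\, d\mu_g$, and in particular the crude lower bound $\lambda_g \ge \min_X s_g$, valid because $\Delta_g \ge 0$.

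Suppose first that $\mathcal Y(X) \le 0$, so that $Y_{[g]} \le 0$ for every conformal class. For the bound $\bar{\lambda}(X) \ge \mathcal Y(X)$, I would invoke the Trudinger-Aubin-Schoen theorem: each conformal class $[g]$ contains a unit-volume constant-scalar-curvature representative $\hat g$ realizing $Y_{[g]}$, so $s_{\hat g} \equiv Y_{[g]} \le 0$; for such a metric the constant functions are the lowest eigenfunctions of $4\Delta_{\hat g} + s_{\hat g}$, whence $\lambda_{\hat g} (vol_{\hat g})^{2/n} = s_{\hat g} = Y_{[g]}$, and taking the supremum over conformal classes yields $\bar{\lambda}(X) \ge \mathcal Y(X)$. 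For the reverse inequality, I would fix an arbitrary metric $g$ and let $u_0 > 0$ be the Yamabe minimizer of $[g]$, normalized by $\int_X u_0^{2n/(n-2)}\, d\mu_g = 1$, so that $\int_X (a_n |\nabla u_0|^2 + s_g u_0^2)\, d\mu_g = Y_{[g]} \le 0$. Using $u_0$ as a test function for $\lambda_g$ together with $a_n \ge 4$ gives $\lambda_g \int_X u_0^2\, d\mu_g \le \int_X (4|\nabla u_0|^2 + s_g u_0^2)\, d\mu_g \le Y_{[g]} \le 0$; since H{\"{o}}lder's inequality yields $\int_X u_0^2\, d\mu_g \le (vol_g)^{2/n}$ and the numerator on the left is nonpositive, dividing by this smaller positive quantity only decreases the ratio, so $\lambda_g (vol_g)^{2/n} \le Y_{[g]} \le \mathcal Y(X)$. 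Taking the supremum over $g$ gives $\bar{\lambda}(X) \le \mathcal Y(X)$, and hence equality.

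Suppose next that $\mathcal Y(X) > 0$; then $X$ admits a metric of positive scalar curvature, and I must exhibit metrics $g$ on $X$ with $\lambda_g (vol_g)^{2/n}$ arbitrarily large. Starting from a positive-scalar-curvature metric and applying the Gromov-Lawson (equivalently Schoen-Yau) surgery technique \cite{g-l, rick-yau}, I would construct, for every $T > 0$, a positive-scalar-curvature metric $g_T$ on $X$ which is isometric to an exact product tube $S^{n-1}(r_0) \times [0,T]$ on a subregion of itself, where $r_0$ is a fixed small radius and $g_T$ coincides with one fixed metric outside a compact set independent of $T$. Then $\min_X s_{g_T} \ge s_1 > 0$ with $s_1$ independent of $T$ — this is where exactness of the product tube matters, since lengthening it incurs no scalar-curvature cost — while $vol(g_T) \to \infty$ as $T \to \infty$. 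Combining this with $\lambda_{g_T} \ge \min_X s_{g_T}$ gives $\lambda_{g_T} (vol_{g_T})^{2/n} \ge s_1 (vol_{g_T})^{2/n} \to \infty$, so $\bar{\lambda}(X) = +\infty$, as claimed.

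The case $\mathcal Y(X) \le 0$ is \emph{soft}: once the sign of the numerator $\int_X (4|\nabla u_0|^2 + s_g u_0^2)\, d\mu_g$ is pinned down, the argument is just bookkeeping with H{\"{o}}lder's inequality and the coefficient comparison $a_n \ge 4$. I expect the main obstacle to be the positive case, where the substance is the geometric input that positivity of scalar curvature survives both the insertion and the arbitrary elongation of a thin cylindrical tube; the only estimate to verify there is that this elongation keeps $\min_X s_{g_T}$ bounded below by a fixed positive constant, which is immediate once the tube is taken to be an exact Riemannian product.
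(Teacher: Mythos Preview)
The paper does not prove Theorem~\ref{AIL}; it is quoted verbatim from \cite{A-ishi-leb-3} and used as a black box. Your argument is correct and is in fact essentially the proof given in \cite{A-ishi-leb-3}: the nonpositive case is exactly the coefficient comparison $4 \le a_n$ together with H{\"o}lder, and the positive case is the long-neck construction (deform a positive-scalar-curvature metric near a point, via the Gromov--Lawson bending lemma, so that a neighbourhood becomes an exact Riemannian product $S^{n-1}(r_0)\times[0,\varepsilon]$, then stretch the cylinder to length $T$ at no scalar-curvature cost and cap off). One small wording point: what you are invoking in the positive case is not a surgery per se but the underlying \emph{bending lemma} from \cite{g-l}; equivalently, you are performing the trivial connect sum $X\#S^n\cong X$ through an arbitrarily long tube. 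With that clarification the proof is complete.
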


Theorem \ref{bound-four} and Theorem \ref{AIL} immediately imply 
\begin{cor}\label{bound-four-perel}
Let $X$ be a closed oriented Riemannian 4-manifold. Suppose that there is a quasi-non-singular solution $\{g(t)\}$, $t \in [0, \infty)$, to the normalized Ricci flow in the sense of Definition \ref{bs}. If the Perelman's $\bar{\lambda}$ invariant of $X$ is negative, i.e., $\bar{\lambda}(X)<0$, then the following holds:
\begin{eqnarray*}
2 \chi(X) -3|\tau(X)| \geq \frac{1}{96{\pi}^2}|\bar{\lambda}(X)|^2. 
\end{eqnarray*}
In particular, $X$ must satisfy the strict FZZ inequality $2 \chi(X) > 3|\tau(X)|$ in this case. Moreover, if the solution is non-singular in the sense of Definition \ref{non-sin}, then the solution does not collapse. 
\end{cor}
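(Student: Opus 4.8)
The plan is to deduce the corollary directly from Theorem~\ref{bound-four} and Theorem~\ref{AIL}; no new analysis is required. First I would use the hypothesis $\bar{\lambda}(X)<0$ to pin down the sign of the Yamabe invariant. By Theorem~\ref{AIL}, if ${\mathcal Y}(X)>0$ then $\bar{\lambda}(X)=+\infty$, which is incompatible with $\bar{\lambda}(X)<0$; hence ${\mathcal Y}(X)\le 0$, and in this regime Theorem~\ref{AIL} gives the equality $\bar{\lambda}(X)={\mathcal Y}(X)$. In particular ${\mathcal Y}(X)=\bar{\lambda}(X)<0$, so the hypotheses of Theorem~\ref{bound-four} are now all in place: $X$ is a closed oriented Riemannian $4$-manifold carrying a quasi-non-singular solution of the normalized Ricci flow, and ${\mathcal Y}(X)<0$.

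Next I would simply quote the conclusion of Theorem~\ref{bound-four}, namely
$$
2\chi(X)-3|\tau(X)| \ \geq \ \frac{1}{96\pi^2}\,|{\mathcal Y}(X)|^2,
$$
and rewrite it using ${\mathcal Y}(X)=\bar{\lambda}(X)$ to obtain the asserted inequality $2\chi(X)-3|\tau(X)|\geq \frac{1}{96\pi^2}|\bar{\lambda}(X)|^2$. Since $\bar{\lambda}(X)<0$ forces $|\bar{\lambda}(X)|\neq 0$, the right-hand side is strictly positive, which is precisely the strict FZZ inequality $2\chi(X)>3|\tau(X)|$. The final assertion, that a non-singular solution cannot collapse, is verbatim the last sentence of Theorem~\ref{bound-four} applied in the present situation (again with ${\mathcal Y}(X)=\bar{\lambda}(X)<0$), so there is nothing further to check.

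There is essentially no obstacle here: the entire content of the corollary is carried by the two cited theorems, and the only substantive move is the identification of $\bar{\lambda}(X)$ with ${\mathcal Y}(X)$ furnished by Theorem~\ref{AIL}. The single point worth making explicit is why the strict hypothesis $\bar{\lambda}(X)<0$---as opposed to $\bar{\lambda}(X)\le 0$---is exactly what is needed to exclude the case ${\mathcal Y}(X)>0$, but this is immediate from the dichotomy in Theorem~\ref{AIL}.
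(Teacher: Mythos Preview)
Your proposal is correct and matches the paper's own treatment exactly: the paper simply states that Theorem~\ref{bound-four} and Theorem~\ref{AIL} immediately imply the corollary, and your write-up just makes explicit the one-line deduction (namely, that $\bar{\lambda}(X)<0$ forces ${\mathcal Y}(X)=\bar{\lambda}(X)<0$ via Theorem~\ref{AIL}, after which Theorem~\ref{bound-four} applies verbatim).
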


Notice that this corollary was firstly proved in \cite{fz-1} under the assumption that the solution to the normalized Ricci flow has unit volume. See Theorem 1.4 in \cite{fz-1}.

\section{Curvature Bounds and Convex Hull of the Set of Monopole Classes}\label{monopoles}

By important works \cite{leb-1, leb-2, leb-4, leb-11, leb-12, leb-17} of LeBrun, it is now well known that the Seiberg-Witten monopole equations \cite{w} lead to a remarkable family of curvature estimates which has many strong applications to 4-dimensional geometry. In this section, following a recent beautiful article \cite{leb-17} of LeBrun, we shall recall firstly these curvature estimates in terms of the {\it convex hull} of the set of all monopole classes on 4-manifolds. We shall use these estimates to prove new obstructions to the existence of non-singular solutions to the normalized Ricci flow in Section 5 below. The main results of this section are Theorems \ref{yamabe-pere} and \ref{key-mono-b} below. \par 
For the convenience of the reader who is unfamiliar with Seiberg-Witten theory, we shall recall briefy  the definition of the Seiberg-Witten monopole equations. Let $X$ be a closed oriented Riemannian 4-manifold and we assume that $X$ satisfies ${b}^{+}(X) \geq 2$, where ${b}^{+}(X)$ stands again for the dimension of a maximal positive definite subspace of ${H}^{2}(X, {\mathbb R})$ with respect to the intersection form. Recall that a ${spin}^{c}$-structure $\Gamma_{X}$ on a smooth Riemannian 4-manifold $X$ induces a pair of spinor bundles ${S}^{\pm}_{\Gamma_{X}}$ which are Hermitian vector bundles of rank 2 over $X$. A Riemannian metric on $X$ and a unitary connection $A$ on the determinant line bundle ${\cal L}_{\Gamma_{X}} := det({S}^{+}_{\Gamma_{X}})$ induce the twisted Dirac operator ${\cal D}_{{A}} : \Gamma({S}^{+}_{\Gamma_{X}}) \longrightarrow \Gamma({S}^{-}_{\Gamma_{X}})$. The Seiberg-Witten monopole equations over $X$ are the following system of non-linear partial differential equations for a unitary connection $A \in {\cal A}_{{\cal L}_{\Gamma_{X}}}$ and a spinor $\phi \in \Gamma({S}^{+}_{\Gamma_{X}})$:
\begin{eqnarray}\label{sw-mono}
{\cal D}_{{A}}{\phi} = 0, \ {F}^{+}_{{A}} = iq(\phi),  
\end{eqnarray}
here ${F}^{+}_{{A}}$ is the self-dual part of the curvature of $A$ and $q : {S}^{+}_{\Gamma_{X}} \rightarrow {\wedge}^{+}$ is a certain natural real-quadratic map satifying 
\begin{eqnarray*}
|q(\phi)|=\frac{1}{2\sqrt{2}}|\phi|^2, 
\end{eqnarray*}
where ${\wedge}^{+}$ is the bundle of self-dual 2-forms. \par
We are now in a position to recall the definition of monopole class \cite{kro, leb-11, ishi-leb-1, ishi-leb-2, leb-17}.
\begin{defn}\label{ishi-leb-2-key}
Let $X$ be a closed oriented smooth 4-manifold with $b^+(X) \geq 2$. An element $\frak{a} \in H^2(X, {\mathbb Z})$/torsion $\subset H^2(X, {\mathbb R})$ is called monopole class of $X$ if there exists a spin${}^c$ structure $\Gamma_{X}$ with 
\begin{eqnarray*}
{c}^{\mathbb R}_{1}({\cal L}_{\Gamma_{X}}) = \frak{a} 
\end{eqnarray*} 
which has the property that the corresponding Seiberg-Witten monopole equations (\ref{sw-mono}) have a solution for every Riemannian metric on $X$. Here ${c}^{\mathbb R}_{1}({\cal L}_{\Gamma_{X}})$ is the image of the first Chern class ${c}_{1}({\cal L}_{\Gamma_{X}})$ of the complex line bundle ${\cal L}_{\Gamma_{X}}$ in $H^2(X, {\mathbb R})$. We shall denote the set of all monopole classes on $X$ by ${\frak C}(X)$.
\end{defn} 

Crucial properties of the set ${\frak C}(X)$ are summarized as follow \cite{leb-17, ishi-leb-2}:

\begin{prop}[\cite{leb-17}]\label{mono}
Let $X$ be a closed oriented smooth 4-manifold with $b^+(X) \geq 2$. Then ${\frak C}(X)$ is a finite set. Morever ${\frak C}(X) = -{\frak C}(X)$ holds, i.e., $\frak{a} \in H^2(X, {\mathbb R})$ is a monopole class if and only if $-\frak{a} \in H^2(X, {\mathbb R})$ is a monopole class, too. 
\end{prop}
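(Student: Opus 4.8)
The plan is to establish both claims—finiteness of $\frak{C}(X)$ and the symmetry $\frak{C}(X) = -\frak{C}(X)$—by invoking standard structural features of the Seiberg-Witten equations, essentially following LeBrun \cite{leb-17}. For finiteness, I would first recall the a priori bound satisfied by any solution $(A, \phi)$ of (\ref{sw-mono}): Weitzenb\"ock's formula applied to ${\cal D}_A \phi = 0$, together with ${F}^+_A = iq(\phi)$ and the pointwise identity $|q(\phi)| = \tfrac{1}{2\sqrt 2}|\phi|^2$, yields the standard $C^0$ estimate $|\phi|^2 \leq \max_X(-s_g)$ on the support of $\phi$, and hence an $L^2$ bound on $\phi$ in terms of the metric. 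This in turn bounds $\|{F}^+_A\|_{L^2}$. Combined with the fact that the harmonic part of ${F}_A$ represents $2\pi {c}_1({\cal L}_{\Gamma_X})$ and the topological identity $\|{F}^+_A\|^2_{L^2} - \|{F}^-_A\|^2_{L^2} = 4\pi^2 ({c}_1^{\mathbb R})^2$, one gets that the self-dual harmonic projection of ${c}_1^{\mathbb R}({\cal L}_{\Gamma_X})$ lies in a fixed ball (for a fixed metric $g$). Since a monopole class must admit solutions for \emph{every} metric, and since finitely many generic metrics can be chosen so that the corresponding self-dual subspaces jointly pin down all of $H^2(X,{\mathbb R})$ up to a compact set, one concludes that $\frak{C}(X)$ is contained in a compact subset of the lattice $H^2(X,{\mathbb Z})/\mathrm{torsion}$, hence is finite.

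For the symmetry, I would use the charge-conjugation symmetry of the Seiberg-Witten setup: if $\Gamma_X$ is a ${\mathrm{spin}}^c$ structure with ${c}_1^{\mathbb R}({\cal L}_{\Gamma_X}) = \frak{a}$, then the conjugate ${\mathrm{spin}}^c$ structure $\bar\Gamma_X$ (obtained by complex-conjugating the spinor bundles, which swaps ${S}^+$ and ${S}^-$ up to the conjugation isomorphism) has ${c}_1^{\mathbb R}({\cal L}_{\bar\Gamma_X}) = -\frak{a}$. A solution $(A,\phi)$ of (\ref{sw-mono}) for $\Gamma_X$ and metric $g$ transforms, under conjugation, into a solution of the corresponding monopole equations for $\bar\Gamma_X$ and the same metric $g$ (the Dirac equation and the curvature equation are both preserved because $q$ and ${F}^+$ transform compatibly under conjugation). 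Hence if solutions exist for all metrics for $\Gamma_X$, the same holds for $\bar\Gamma_X$, so $-\frak{a} \in \frak{C}(X)$ whenever $\frak{a} \in \frak{C}(X)$, giving $\frak{C}(X) = -\frak{C}(X)$.

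I expect the main obstacle to be the finiteness argument rather than the symmetry: one must argue carefully that the \emph{uniform-over-all-metrics} requirement in Definition \ref{ishi-leb-2-key}, together with the metric-dependent curvature bounds, actually forces the candidate classes into a single compact (hence finite, being integral) set. The subtlety is that the self-dual projection depends on the metric, so no single metric's bound suffices; the clean way to handle this is LeBrun's observation that one may fix \emph{one} metric $g$ and obtain the bound $\langle \frak{a}, [\omega_g] \rangle^2 \le \|{F}^+_A\|^2_{L^2}$ for each self-dual harmonic form $\omega_g$, then note that the right-hand side is bounded purely in terms of $g$ via the $C^0$ estimate on $\phi$; since $b^+(X) \ge 1$ this already cuts the monopole classes down to a slab, and iterating over finitely many metrics (or simply combining with the topological Chern-class constraint $({c}_1^{\mathbb R})^2 = 2\chi(X) + 3\tau(X)$) confines $\frak{C}(X)$ to a bounded region of the integer lattice. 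I would cite \cite{leb-17} for the precise form of these estimates rather than reproduce the Weitzenb\"ock computation in full.
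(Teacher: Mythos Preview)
The paper does not actually prove this proposition; it is quoted from LeBrun \cite{leb-17} (and \cite{ishi-leb-2}) and stated without proof as background for the curvature estimates that follow. So there is no ``paper's own proof'' to compare against. Your overall strategy---$C^0$ bound on the spinor via Weitzenb\"ock, hence an $L^2$ bound on $F_A^+$, hence a bound on $(\frak{a}^+)^2$ for any fixed metric, together with charge conjugation for the symmetry---is exactly the standard argument and is what LeBrun carries out in \cite{leb-17}.

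That said, one step in your finiteness sketch is wrong and should be removed: the ``topological Chern-class constraint'' $({c}_1^{\mathbb R})^2 = 2\chi(X) + 3\tau(X)$ is \emph{not} an identity for arbitrary spin${}^c$ structures. That equality holds only for spin${}^c$ structures induced by an almost-complex structure; in general $c_1({\cal L}_{\Gamma_X})^2$ ranges over all integers congruent to $\tau(X)$ modulo $8$. The correct way to close the finiteness argument is the one you gesture at earlier: fix a \emph{generic} metric $g$; since a monopole class has solutions for every metric it has them for $g$, and for generic $g$ the moduli space is a smooth manifold of non-negative dimension, forcing $c_1^2 \geq 2\chi(X)+3\tau(X)$. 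Combined with the scalar-curvature bound $(\frak{a}^+)^2 \leq C_g$, this gives $|(\frak{a}^-)^2| \leq C_g - (2\chi+3\tau)$, and since $\frak{a}^+$ and $\frak{a}^-$ lie in definite lattices, only finitely many $\frak{a}$ survive. Your charge-conjugation argument for $\frak{C}(X)=-\frak{C}(X)$ is correct as written.
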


These properties of ${\frak C}(X)$ which sits in a real vector space $H^2(X, {\mathbb R})$ natually lead us to consider the convex hull ${\bf{Hull}}({\frak C}(X))$ of ${\frak C}(X)$. Recall that, for any subset $W$ of a real vector space $V$, one can consider the convex hull ${\bf{Hull}}(W) \subset V$, meaning the smallest convex subset of $V$ containg $W$. Then, Proposition \ref{mono} immediately implies the following result:
\begin{prop}[\cite{leb-17}]\label{mono-leb}
Let $X$ be a closed oriented smooth 4-manifold with $b^+(X) \geq 2$. Then the convex hull ${\bf{Hull}}({\frak C}(X)) \subset H^2(X, {\mathbb R})$ of ${\frak C}(X)$ is  compact, and symmetric, i.e., ${\bf{Hull}}({\frak C}(X)) = -{\bf{Hull}}({\frak C}(X))$. 
\end{prop}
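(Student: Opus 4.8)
The plan is to deduce both assertions directly from Proposition \ref{mono}, which already supplies exactly the two facts we need: that ${\frak C}(X)$ is a finite subset of $H^2(X, {\mathbb R})$, and that it is symmetric about the origin, i.e. ${\frak C}(X) = -{\frak C}(X)$. Everything else is elementary convex geometry in a finite-dimensional real vector space, so there is no substantive analytic or gauge-theoretic content left to prove here.

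For compactness, I would first observe that since $X$ is closed, $H^2(X, {\mathbb R})$ is a finite-dimensional real vector space. Writing ${\frak C}(X) = \{ \frak{a}_1, \dots, \frak{a}_N \}$, the convex hull ${\bf{Hull}}({\frak C}(X))$ is precisely the image of the standard simplex $\Delta^{N-1} = \{ (t_1, \dots, t_N) : t_i \geq 0, \ \sum_{i=1}^{N} t_i = 1 \}$ under the continuous affine map $(t_1, \dots, t_N) \mapsto \sum_{i=1}^{N} t_i \frak{a}_i$. Since $\Delta^{N-1}$ is compact and the continuous image of a compact set is compact, ${\bf{Hull}}({\frak C}(X))$ is compact. (Equivalently, one may invoke Carath\'eodory's theorem to see that the hull of a finite set is a bounded polytope, hence compact.)

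For symmetry, I would use that the antipodal map $\iota : v \mapsto -v$ on $H^2(X, {\mathbb R})$ is linear, hence affine; consequently it carries convex sets to convex sets and commutes with the operation of forming convex hulls, so $\iota\big({\bf{Hull}}(W)\big) = {\bf{Hull}}\big(\iota(W)\big)$ for every subset $W \subset H^2(X, {\mathbb R})$. Applying this with $W = {\frak C}(X)$ and invoking ${\frak C}(X) = -{\frak C}(X)$ from Proposition \ref{mono} yields $-{\bf{Hull}}({\frak C}(X)) = {\bf{Hull}}(-{\frak C}(X)) = {\bf{Hull}}({\frak C}(X))$, which is the claimed symmetry.

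I do not expect any genuine obstacle in this argument; it is a formal corollary of Proposition \ref{mono}. The only point that deserves a word of care is the appeal to finite-dimensionality of $H^2(X, {\mathbb R})$ — valid because $X$ is a closed manifold — which is what guarantees that the convex hull of a finite (or, more generally, compact) set is again compact.
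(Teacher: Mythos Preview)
Your proposal is correct and matches the paper's approach exactly: the paper gives no proof beyond the sentence ``Proposition \ref{mono} immediately implies the following result,'' and your argument simply spells out the elementary convex-geometry details behind that word ``immediately.'' There is nothing to add or correct.
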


By Proposition \ref{mono}, ${\frak C}(X)$ is a finite set and hence we are able to write as ${\frak C}(X)=\{{\frak{a}}_{1},{\frak{a}}_{2}, \cdots, {\frak{a}}_{n} \}$. The convex hull ${\bf{Hull}}({\frak C}(X))$ is then expressed as follows:\begin{eqnarray}\label{hull}
{\bf{Hull}}({\frak C}(X))= \{ \sum^{n}_{i=1}t_{i} {\frak{a}}_{i} \ | \ t_{i} \in [0,1], \ \sum^{n}_{i=1}t_{i}=1 \}. 
\end{eqnarray}
Notice also that the symmetric property tells us that ${\bf{Hull}}({\frak C}(X))$ contains the zero element. \par 
Now, consider the following self-intersection function:
\begin{eqnarray*}
{\cal Q} : H^2(X, {\mathbb R}) \rightarrow {\mathbb R}
\end{eqnarray*}
which is defined by $x \mapsto x^2:=<x \cup x, [X]>$, where $[X]$ is the fundamental class of $X$. Since this function ${\cal Q}$ is a polynomial function and hence is a continuous function on $H^2(X, {\mathbb R})$. We can therefore conclude that the restriction ${\cal Q} |_{{\bf{Hull}}({\frak C}(X))}$ to the compact subset ${\bf{Hull}}({\frak C}(X))$ of $H^2(X, {\mathbb R})$ must achieve its maximum. This leads us naturally to introduce the following quantity ${\beta}^2(X)$:
\begin{defn}[\cite{leb-17}]\label{beta}
Suppose that $X$ is a closed oriented smooth 4-manifold with $b^+(X) \geq 2$. Let ${\bf{Hull}}({\frak C}(X)) \subset H^2(X, {\mathbb R})$ be the convex hull of the set ${\frak C}(X)$ of all monopole classes on $X$. If ${\frak C}(X) \not= \emptyset$, define
\begin{eqnarray*}
{\beta}^2(X):= \max \{ {\cal Q}(x):=x^2 \ | \ x \in {\bf{Hull}}({\frak C}(X)) \}.
\end{eqnarray*}
On the other hand, if ${\frak C}(X) = \emptyset$ holds, define simply as ${\beta}^2(X):=0$.
\end{defn}
Since ${\bf{Hull}}({\frak C}(X))$ contains the zero element, the above definition particularly implies that ${\beta}^2(X) \geq 0$ holds. \par
On the other hand, the Hodge star operator associated to a given metric $g$ defines an involution on the real vector space $H^2(X, {\mathbb R})$ and this gives rise to an eigenspace decomposition:
\begin{eqnarray}\label{h-de}
H^2(X, {\mathbb R}) = {\cal H}^+_{g} \oplus {\cal H}^-_{g}, 
\end{eqnarray}
where ${\cal H}^{\pm}_{g}:=\{ \psi \in \Gamma({\wedge}^{\pm}) \ | \ d \psi=0 \}$ are the space of self-dual and anti-self-dual harmonic 2-forms. Notice that this decomposition depends on the metric $g$. This dependence also can be described in terms of the {\it period map}. In fact, consider the following map which is so called the period map of the Riemannian 4-manifold $X$: 
\begin{eqnarray}\label{h-de-p}
{\cal P} : {\cal R}_{X} \longrightarrow {Gr}^+_{b^+(X)} \Big(H^2(X, {\mathbb R}) \Big)
\end{eqnarray}
which is defined by $g \mapsto {\cal H}^+_{g}$. Here, ${\cal R}_{X}$ is the infinite dimensional space of all Riemannian metrics on $X$ and ${Gr}^+_{b^+} \Big(H^2(X, {\mathbb R}) \Big)$ is the finite dimensional Grassmannian of $b^+(X)$-dimensional subspace of $H^2(X, {\mathbb R})$ on which the intersection form of $X$ is positive definite. Namely, we are able to conclude that the decomposition (\ref{h-de}) depends on the image of the metric $g$ under the period map (\ref{h-de-p}). \par
Now, let $\frak{a} \in H^2(X, {\mathbb R})$ be a monopole class of $X$. Then we can consider the self-dual part $\frak{a}^+$ of $\frak{a}$ with respect to the decompsition (\ref{h-de}) and take square $\big(\frak{a}^+ \big)^2$. From the above argument, it is clear that this quantity $\big(\frak{a}^+ \big)^2$ also depends on the image of the meric $g$ under the period map (\ref{h-de-p}). On the other hand, the quantity ${\beta}^2(X)$ introduced in Definition \ref{beta} above dose not depend on the metric and hence it never depend on the period map (\ref{h-de-p}). One of important observations made in \cite{leb-17} is the following:
\begin{prop}[\cite{leb-17}]\label{main-leb}
Let $X$ be a closed oriented smooth 4-manifold with $b^+(X) \geq 2$. Suppose that ${\frak C}(X) \not= \emptyset$. Then, for any Riemannian metric $g$ on $X$, there is a monopole class $\frak{a} \in {\frak C}(X)$ satisfying 
\begin{eqnarray}\label{h-de-p-1}
\big(\frak{a}^+ \big)^2 \geq {\beta}^2(X). 
\end{eqnarray}
\end{prop}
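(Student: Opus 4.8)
The plan is to combine the definition of $\beta^2(X)$ as the maximum of the self-intersection function ${\cal Q}$ over the \emph{compact} convex set ${\bf{Hull}}({\frak C}(X))$ (compactness coming from Proposition \ref{mono-leb}) with two elementary facts attached to a fixed metric $g$: the assignment $x \mapsto x^+$, sending a class to its self-dual part in the splitting (\ref{h-de}), is \emph{linear}; and in that splitting the summands ${\cal H}^+_g$ and ${\cal H}^-_g$ are orthogonal for the cup product, with ${\cal Q}$ positive definite on the former and negative definite on the latter.

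First I would use continuity of ${\cal Q}$ together with compactness of ${\bf{Hull}}({\frak C}(X))$ to choose a point $x_0 \in {\bf{Hull}}({\frak C}(X))$ realizing the maximum, so that $x_0^2 = \beta^2(X)$. Writing ${\frak C}(X) = \{ \frak{a}_1, \dots, \frak{a}_n \}$ (finite by Proposition \ref{mono}), the description (\ref{hull}) of the hull expresses $x_0 = \sum_{i=1}^{n} t_i \frak{a}_i$ with $t_i \in [0,1]$ and $\sum_i t_i = 1$. Now fix an arbitrary Riemannian metric $g$ on $X$. Orthogonality of the two summands in (\ref{h-de}) gives $y^2 = (y^+)^2 + (y^-)^2$ for every $y \in H^2(X,{\mathbb R})$, and negative definiteness of ${\cal Q}$ on ${\cal H}^-_g$ forces $(y^-)^2 \leq 0$; applied to $y = x_0$ this yields
\begin{eqnarray*}
\beta^2(X) = x_0^2 = (x_0^+)^2 + (x_0^-)^2 \leq (x_0^+)^2 .
\end{eqnarray*}

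Next, linearity of $x \mapsto x^+$ gives $x_0^+ = \sum_{i=1}^{n} t_i \frak{a}_i^+$, a convex combination of points of ${\cal H}^+_g$. Since ${\cal Q}$ restricts to a positive-definite quadratic form on ${\cal H}^+_g$, the map $z \mapsto z^2$ is convex there, so Jensen's inequality gives
\begin{eqnarray*}
(x_0^+)^2 = \Big( \sum_{i=1}^{n} t_i \frak{a}_i^+ \Big)^2 \leq \sum_{i=1}^{n} t_i (\frak{a}_i^+)^2 \leq \max_{1 \leq i \leq n} (\frak{a}_i^+)^2 .
\end{eqnarray*}
Combining the two displays, $\beta^2(X) \leq \max_i (\frak{a}_i^+)^2$, so taking $\frak{a}$ to be any $\frak{a}_i$ that attains this maximum yields a monopole class with $(\frak{a}^+)^2 \geq \beta^2(X)$. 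The resulting $\frak{a}$ is permitted to depend on $g$ through the image of $g$ under the period map (\ref{h-de-p}), which is precisely what the statement asserts.

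I do not anticipate a genuine obstacle: the only thing requiring care is the direction of the two inequalities --- discarding $(x_0^-)^2 \leq 0$ to pass from $x_0^2$ \emph{up} to $(x_0^+)^2$, and then using convexity (equivalently, that a convex function on the simplex $\{(t_i) : t_i \geq 0, \ \sum_i t_i = 1\}$ attains its maximum at a vertex, and the vertices map onto the monopole classes $\frak{a}_i$) to pass from the average $x_0^+$ \emph{back down} to a single monopole class. Finiteness of ${\frak C}(X)$ and compactness of ${\bf{Hull}}({\frak C}(X))$ are exactly what make the two relevant extrema exist.
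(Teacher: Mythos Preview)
Your argument is correct. The two key inequalities go in the right directions: dropping the negative-definite piece gives $\beta^2(X)=x_0^2\le (x_0^+)^2$, and convexity of the positive-definite quadratic form on ${\cal H}^+_g$ (Jensen) pushes $(x_0^+)^2$ below the maximum of the $(\frak{a}_i^+)^2$. Finiteness of ${\frak C}(X)$ guarantees the maximum is attained at an actual monopole class.

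As for comparison: the paper does \emph{not} supply its own proof of this proposition --- it is quoted from \cite{leb-17} and used as a black box. Your argument is essentially the one LeBrun gives there: the function $x\mapsto (x^+)^2$ is convex on $H^2(X,{\mathbb R})$ (being the composition of the linear projection onto ${\cal H}^+_g$ with a positive-definite quadratic form), so its maximum over the polytope ${\bf{Hull}}({\frak C}(X))$ is attained at an extreme point, hence at some $\frak{a}_i\in{\frak C}(X)$; combined with $(x^+)^2\ge x^2$ for all $x$, this gives $\max_i(\frak{a}_i^+)^2\ge\max_{x\in{\bf Hull}}(x^+)^2\ge\max_{x\in{\bf Hull}}x^2=\beta^2(X)$. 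Your version singles out the maximizer $x_0$ first and then applies Jensen, which amounts to the same thing.
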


On the other hand, it is well known that, as was firstly pointed out by Witten \cite{w}, the existence of a monopole class gives rise to a priori lower bound on the $L^2$-norm of the scalar curvature of Riemannian metrics. Its refined version is proved by LeBrun \cite{leb-2, leb-17}:
\begin{prop}[\cite{leb-2, leb-17}]\label{slight-leb}
Let $X$ be a closed oriented smooth 4-manifold with $b^+(X) \geq 2$ and a monopole class $\frak{a} \in H^2(X, {\mathbb Z})$/torsion $\subset H^2(X, {\mathbb R})$. Let $g$ be any Riemannian metric on $X$ and let $\frak{a}^+$ be the self-dual part of $\frak{a}$ with respect to the decomposition $H^2(X, {\mathbb R}) = {\cal H}^+_{g} \oplus {\cal H}^-_{g}$, identified with the space of $g$-harmonic 2-forms, into eigenspaces of the Hodge star operator. Then, the scalar curvature $s_{g}$ of $g$ must satisfy the following bound:
\begin{eqnarray}\label{sca-leb}
{\int}_{X}{{s}^2_{g}}d{\mu}_{g} \geq {32}{\pi}^{2}\big(\frak{a}^+ \big)^2. 
\end{eqnarray}
If $\frak{a}^+ \not=0$, furthermore, equality holds if and only if there is an integrable complex structure $J$ with ${c}^{\mathbb R}_{1}(X, J)=\frak{a}$ such that $(X, g, J)$ is a K{\"{a}}hler manifold of constant negative scalar curvature. 
\end{prop}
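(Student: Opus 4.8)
\medskip
\noindent\textbf{Proof strategy.}
The plan is to combine the Weitzenb\"ock formula for the $\mathrm{spin}^{c}$ Dirac operator with the Seiberg--Witten equations (\ref{sw-mono}) to obtain a lower bound for $\int_{X}s_{g}^{2}\,d\mu_{g}$, and then to convert it into a bound on $(\mathfrak{a}^{+})^{2}$ by Hodge theory. First I would fix a $\mathrm{spin}^{c}$ structure $\Gamma_{X}$ with $c_{1}^{\mathbb{R}}(\mathcal{L}_{\Gamma_{X}})=\mathfrak{a}$ realizing $\mathfrak{a}$ as a monopole class; by hypothesis, for the given metric $g$ there is a solution $(A,\phi)$ of (\ref{sw-mono}). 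The relevant Weitzenb\"ock identity is $\mathcal{D}_{A}^{*}\mathcal{D}_{A}=\nabla_{A}^{*}\nabla_{A}+\tfrac{s_{g}}{4}+\tfrac12\,F_{A}^{+}\!\cdot$, where $F_{A}^{+}\!\cdot$ denotes Clifford multiplication by the self-dual $2$-form $F_{A}^{+}$ on positive spinors.

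Next I would pair this identity with $\phi$, integrate over $X$, and use $\mathcal{D}_{A}\phi=0$ together with $F_{A}^{+}=iq(\phi)$ and the pointwise identity $\tfrac12\langle iq(\phi)\!\cdot\!\phi,\phi\rangle=\tfrac14|\phi|^{4}$ to obtain
\begin{eqnarray*}
0=\int_{X}\Big(|\nabla_{A}\phi|^{2}+\frac{s_{g}}{4}|\phi|^{2}+\frac14|\phi|^{4}\Big)\,d\mu_{g}.
\end{eqnarray*}
Dropping the nonnegative term $|\nabla_{A}\phi|^{2}$ and applying the Cauchy--Schwarz inequality to $\int_{X}(-s_{g})|\phi|^{2}\,d\mu_{g}\le\big(\int_{X}s_{g}^{2}\,d\mu_{g}\big)^{1/2}\big(\int_{X}|\phi|^{4}\,d\mu_{g}\big)^{1/2}$ yields $\int_{X}|\phi|^{4}\,d\mu_{g}\le\int_{X}s_{g}^{2}\,d\mu_{g}$. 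Since $|q(\phi)|=\tfrac1{2\sqrt2}|\phi|^{2}$, we have $|F_{A}^{+}|^{2}=\tfrac18|\phi|^{4}$ pointwise, hence $\int_{X}|F_{A}^{+}|^{2}\,d\mu_{g}\le\tfrac18\int_{X}s_{g}^{2}\,d\mu_{g}$.

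Then I would bound $\int_{X}|F_{A}^{+}|^{2}\,d\mu_{g}$ from below by $4\pi^{2}(\mathfrak{a}^{+})^{2}$: the closed real $2$-form $\tfrac{i}{2\pi}F_{A}$ represents $\mathfrak{a}$ in de Rham cohomology, so writing $\tfrac{i}{2\pi}F_{A}=\omega+d\eta$ with $\omega$ the $g$-harmonic representative, the self-dual part $\omega^{+}$ is closed and represents $\mathfrak{a}^{+}$, the cross term $\int_{X}\omega^{+}\wedge d\eta$ vanishes by Stokes, self-dual wedge anti-self-dual vanishes pointwise, and $\int_{X}\alpha\wedge\alpha=\int_{X}|\alpha|^{2}\,d\mu_{g}\ge0$ for self-dual $\alpha$; this gives $\tfrac1{4\pi^{2}}\int_{X}|F_{A}^{+}|^{2}\,d\mu_{g}=(\mathfrak{a}^{+})^{2}+\int_{X}|(d\eta)^{+}|^{2}\,d\mu_{g}\ge(\mathfrak{a}^{+})^{2}$. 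Combining this with the previous inequality produces $(\mathfrak{a}^{+})^{2}\le\tfrac1{32\pi^{2}}\int_{X}s_{g}^{2}\,d\mu_{g}$, which is (\ref{sca-leb}).

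Finally, to treat the equality case under the assumption $\mathfrak{a}^{+}\neq0$, I would trace equality back through each step: it forces $\nabla_{A}\phi\equiv0$, $(d\eta)^{+}\equiv0$, and Cauchy--Schwarz equality $s_{g}=-c|\phi|^{2}$ for a constant $c\ge0$; moreover $\mathfrak{a}^{+}\neq0$ forces $\phi\not\equiv0$, which, being parallel, has constant nonzero length, so $s_{g}$ is a negative constant. A nowhere-vanishing parallel positive spinor then yields a parallel self-dual $2$-form of constant length, hence a $\nabla$-parallel almost complex structure $J$ compatible with $g$; thus $J$ is integrable and $(X,g,J)$ is K\"ahler of constant negative scalar curvature, $\Gamma_{X}$ is the canonical $\mathrm{spin}^{c}$ structure of $J$, and $c_{1}^{\mathbb{R}}(X,J)=c_{1}^{\mathbb{R}}(\mathcal{L}_{\Gamma_{X}})=\mathfrak{a}$; conversely, on such a K\"ahler manifold the canonical $\mathrm{spin}^{c}$ structure makes all the inequalities above equalities. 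The hard part will be precisely this equality analysis: the lower bound itself is essentially a one-line integration of the Weitzenb\"ock identity, but recovering the integrable complex structure from the parallel spinor and matching its first Chern class requires the careful pointwise algebra of $\mathrm{spin}^{c}$ spinors in dimension four.
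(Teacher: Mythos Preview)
Your argument is correct and is precisely the standard proof of this estimate due to Witten and LeBrun. Note, however, that the present paper does not give its own proof of this proposition: it is quoted as a known result from \cite{leb-2, leb-17} and used as a black box. What you have written is essentially the proof one finds in those references---the Weitzenb\"ock identity for the $\mathrm{spin}^{c}$ Dirac operator combined with the monopole equations, followed by Cauchy--Schwarz and the Hodge-theoretic lower bound $\int_{X}|F_{A}^{+}|^{2}\,d\mu_{g}\ge 4\pi^{2}(\mathfrak{a}^{+})^{2}$---together with the equality analysis via the parallel spinor. So there is nothing to compare against here beyond observing that your proof matches the cited literature.
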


In \cite{leb-11, leb-17}, LeBrun moreover finds that the existence of a monopole class implies an estimate involving both the scalar curvature and the self-dual Weyl curvature as follows: 
\begin{prop}[\cite{leb-11, leb-17}]\label{fami-leb}
Let $X$ be a closed oriented smooth 4-manifold with $b^+(X) \geq 2$ and a monopole class $\frak{a} \in H^2(X, {\mathbb Z})$/torsion $\subset H^2(X, {\mathbb R})$. Let $g$ be any Riemannian metric on $X$ and let $\frak{a}^+$ be the self dual part of $\frak{a}$ with respect to the decomposition $H^2(X, {\mathbb R}) = {\cal H}^+_{g} \oplus {\cal H}^-_{g}$, identified with the space of $g$-harmonic 2-forms, into eigenspaces of the Hodge star operator. Then, the scalar curvature $s_{g}$ and the self-dual Weyl curvature $W^{+}_{g}$ of $g$ satisfy the following:
\begin{eqnarray}\label{weyl-leb}
{\int}_{X}\Big({s}_{g}-\sqrt{6}|W^{+}_{g}|\Big)^2 d{\mu}_{g} \geq 72{\pi}^{2}\big(\frak{a}^+ \big)^2, 
\end{eqnarray}
where the point wise norm are calculated with respect to $g$. And if $\frak{a}^+ \not =0$, furthermore, equality holds if and only if there is a symplectic form $\omega$, where the deRham class $[\omega]$ is negative multiple of $\frak{a}^+$ and ${c}^{\mathbb R}_{1}(X, \omega)=\frak{a}$, such that $(X, g, \omega)$ is a almost complex-K{\"{a}}hler manifold with the following complicated properties: 
\begin{itemize}
\item $2{s}_{g} + |\nabla \omega|^2$ is a negative constant;
\item $\omega$ belongs to the lowest eigenspace of $W^+_{g} : \wedge^+ \rightarrow \wedge^+$ everywhere; and 
\item the two largest eigenvalues of $W^+_{g} : \wedge^+ \rightarrow \wedge^+$ are everywhere equal. 
\end{itemize} 
\end{prop}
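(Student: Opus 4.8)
The plan is to reconstruct LeBrun's Seiberg--Witten curvature estimate, refining the argument behind Proposition~\ref{slight-leb}. Since $\frak{a}$ is a monopole class, Definition~\ref{ishi-leb-2-key} gives, for the fixed metric $g$, a ${spin}^c$ structure $\Gamma_X$ with ${c}^{\mathbb R}_1({\cal L}_{\Gamma_X})=\frak{a}$ and a solution $(A,\phi)$ of (\ref{sw-mono}). I would first dispose of the case $\frak{a}^+=0$, in which (\ref{weyl-leb}) is vacuous since its left side is non-negative, and then assume $\frak{a}^+\neq 0$; in that case $\phi\not\equiv 0$, because ${F}^+_A=iq(\phi)$ and $\frak{a}^+$ is the ${\cal H}^+_g$-projection of $\tfrac{i}{2\pi}{F}_A$, so $\phi\equiv 0$ would force $\frak{a}^+=0$. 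I would also record the two purely algebraic/topological facts: pointwise $|{F}^+_A|=|q(\phi)|=\tfrac{1}{2\sqrt2}|\phi|^2$, and, since $\frak{a}^+$ is the ${\cal H}^+_g$-projection of $\tfrac{i}{2\pi}{F}^+_A$ and projection does not increase $L^2$-norm,
\begin{eqnarray*}
\big(\frak{a}^+\big)^2\ =\ \|\frak{a}^+\|^2_{L^2}\ \leq\ \frac{1}{4{\pi}^2}{\int}_X|{F}^+_A|^2\,d{\mu}_g\ =\ \frac{1}{32{\pi}^2}{\int}_X|\phi|^4\,d{\mu}_g ,
\end{eqnarray*}
so ${\int}_X|\phi|^4\,d{\mu}_g\geq 32{\pi}^2(\frak{a}^+)^2$.

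The analytic core is a pair of Weitzenb\"ock identities. Applying the Lichnerowicz formula for the twisted Dirac operator to the harmonic spinor $\phi$, using the Clifford-multiplication identity $q(\phi)\cdot\phi=\tfrac12|\phi|^2\phi$, and integrating by parts gives
\begin{eqnarray*}
0\ =\ {\int}_X\Big(|{\nabla}_A\phi|^2+\frac{{s}_g}{4}|\phi|^2+\frac14|\phi|^4\Big)\,d{\mu}_g .
\end{eqnarray*}
Then I would apply the Weitzenb\"ock formula for self-dual $2$-forms, which on ${\wedge}^+$ reads $dd^*+d^*d=\nabla^*\nabla+\tfrac{{s}_g}{3}-2W^+_g$, to the real self-dual form $\varphi:=-i{F}^+_A=q(\phi)$, pairing with $\varphi$ and integrating; the $d\varphi$ and $d^*\varphi$ contributions are governed by the Bianchi identity $d{F}_A=0$ and can be estimated away, vanishing in the equality case. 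The essential new pointwise input is the elementary bound $\langle W^+_g(\hat\varphi),\hat\varphi\rangle\geq-\sqrt{2/3}\,|W^+_g|$ for any unit self-dual form $\hat\varphi$, with equality iff $\hat\varphi$ lies in the lowest eigenspace of $W^+_g:{\wedge}^+\to{\wedge}^+$ and the remaining two eigenvalues coincide; applied to $\hat\varphi=\varphi/|\varphi|$ on $\{\phi\neq 0\}$ this turns the $W^+_g$-term of the second identity into a lower bound for ${\int}_X|W^+_g|\,|\phi|^4\,d{\mu}_g$. The last ingredient is the refined Kato inequality $|{\nabla}_A\phi|^2\geq\tfrac43|{\nabla}|\phi||^2$ for the Dirac-harmonic spinor, together with $|{\nabla}\varphi|^2\leq c\,|\phi|^2|{\nabla}_A\phi|^2$, which is exactly what is needed so that the gradient terms of the two identities cancel in the borderline case.

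Combining the two identities should produce
\begin{eqnarray*}
{\int}_X\big(-{s}_g+\sqrt6\,|W^+_g|\big)|\phi|^2\,d{\mu}_g\ \geq\ \frac32{\int}_X|\phi|^4\,d{\mu}_g ,
\end{eqnarray*}
and then Cauchy--Schwarz,
\begin{eqnarray*}
{\int}_X\big(-{s}_g+\sqrt6\,|W^+_g|\big)|\phi|^2\,d{\mu}_g\ \leq\ \Big({\int}_X\big({s}_g-\sqrt6\,|W^+_g|\big)^2d{\mu}_g\Big)^{1/2}\Big({\int}_X|\phi|^4\,d{\mu}_g\Big)^{1/2},
\end{eqnarray*}
gives ${\int}_X({s}_g-\sqrt6|W^+_g|)^2\,d{\mu}_g\geq\tfrac94{\int}_X|\phi|^4\,d{\mu}_g\geq\tfrac94\cdot 32{\pi}^2(\frak{a}^+)^2=72{\pi}^2(\frak{a}^+)^2$, which is (\ref{weyl-leb}). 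One can sanity-check the constant: a K\"ahler metric of negative scalar curvature has $\sqrt6|W^+_g|=-{s}_g/2$, so both displayed inequalities become equalities exactly when Proposition~\ref{slight-leb} does. For the equality case with $\frak{a}^+\neq 0$ I would run the rigidity analysis backwards: Cauchy--Schwarz equality makes $-{s}_g+\sqrt6|W^+_g|$ pointwise proportional to $|\phi|^2$; equality in the refined Kato and the vanishing of the $d\varphi,d^*\varphi$ terms force $\varphi/|\varphi|$ to be parallel where $\phi\neq 0$; equality in the linear-algebra bound forces $\varphi$ into the lowest eigenspace of $W^+_g$ with the two largest eigenvalues equal; and the usual Seiberg--Witten/almost-K\"ahler identification then exhibits a negative constant multiple $\omega$ of $\varphi$ as a $g$-calibrated symplectic form with $[\omega]$ a negative multiple of $\frak{a}^+$, ${c}^{\mathbb R}_1(X,\omega)=\frak{a}$, and $2{s}_g+|{\nabla}\omega|^2$ a negative constant, which is the list in the statement.

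The main obstacle I expect is not the shape of the argument but making the constant $72{\pi}^2$ and the equality description simultaneously sharp: the two Weitzenb\"ock identities live at different orders in $|\phi|$, so combining them without loss requires the exact normalization of $q$, the sharp constant $\sqrt{2/3}$ (with its equality case), and the sharp Kato constant $\tfrac43$ to conspire, and the rigidity conclusion has to be proved on $\{\phi\neq 0\}$ and then propagated across its complement. Since all of this is the content of LeBrun's estimates in \cite{leb-11, leb-17}, in the write-up I would either invoke those directly or present the derivation above; for the obstructions in Section~\ref{obstruction} only the inequality (\ref{weyl-leb}) itself is needed, with the equality case used merely to exclude the borderline.
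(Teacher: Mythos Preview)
The paper does not prove Proposition~\ref{fami-leb} at all: it is quoted verbatim as a result of LeBrun, with the citation \cite{leb-11, leb-17} attached to the proposition header and no argument supplied. So there is no ``paper's own proof'' to compare against beyond the bare citation. Your proposal goes considerably further, sketching the actual content of LeBrun's argument: the Lichnerowicz formula for the twisted Dirac operator, the Weitzenb\"ock formula on ${\wedge}^+$, the sharp pointwise bound $\langle W^+_g\hat\varphi,\hat\varphi\rangle\geq -\sqrt{2/3}\,|W^+_g|$ with its equality characterization, and the Cauchy--Schwarz step that produces the constant $72\pi^2$. These are indeed the ingredients of LeBrun's proof, and your numerics check out ($\tfrac94\cdot 32\pi^2=72\pi^2$). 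You also note at the end that one could simply invoke \cite{leb-11, leb-17} directly---which is exactly, and only, what the paper does.

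One caution on the details of your sketch: the step ``combining the two identities should produce $\int(-s_g+\sqrt6|W^+_g|)|\phi|^2\geq\tfrac32\int|\phi|^4$'' is the entire analytic content of LeBrun's estimate, and your account of how the gradient terms interact (via a refined Kato inequality and an inequality $|\nabla\varphi|^2\leq c|\phi|^2|\nabla_A\phi|^2$) is somewhat schematic. In LeBrun's actual argument the link between the two Weitzenb\"ock levels is made through the squaring map $\phi\mapsto q(\phi)$ and a direct computation of $|\nabla q(\phi)|^2$ in terms of $|\phi|^2|\nabla_A\phi|^2$, rather than through Kato; your outline is morally right but would need that computation made explicit to stand on its own. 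Since the paper is content to cite the result, this is not a defect relative to the paper---your proposal strictly contains the paper's ``proof.''
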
 

Notice that, as was already mentioned above, the lower bounds of both (\ref{sca-leb}) and (\ref{weyl-leb}) depend on the image of the Riemmanin metric under the period map (\ref{h-de-p}). This means that these curvature estimates are not uniform in the metric. Propositions \ref{main-leb}, \ref{slight-leb} and \ref{fami-leb} together imply, however, the following curvature estimates which do not depend on the image of the Riemannian metric under the period map (\ref{h-de-p}):  
\begin{thm}[\cite{leb-17}]\label{beta-ine-key}
Suppose that $X$ is a closed oriented smooth 4-manifold with $b^+(X) \geq 2$. Then any Riemannian metric $g$ on $X$ satisfies the following curvature estimates:
\begin{eqnarray}\label{weyl-leb-sca-1}
{\int}_{X}{{s}^2_{g}}d{\mu}_{g} \geq {32}{\pi}^{2}\beta^2(X), 
\end{eqnarray}
\begin{eqnarray}\label{weyl-leb-sca-2}
{\int}_{X}\Big({s}_{g}-\sqrt{6}|W^{+}_{g}|\Big)^2 d{\mu}_{g} \geq 72{\pi}^{2}\beta^2(X), 
\end{eqnarray}
where $s_{g}$ and $W^{+}_{g}$ denote respectively the scalar curvature and the self-dual Weyl curvature of $g$. If $X$ has a non-zero monopole class and, moreover, equality occurs in either the first or the second estimate if and only if $g$ is a K{\"{a}}hler-Einstein metric with negative scalar curvature. 
\end{thm}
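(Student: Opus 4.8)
The plan is to derive the two estimates by combining the metric-independent selection of a good monopole class furnished by Proposition \ref{main-leb} with the metric-dependent scalar-curvature and Weyl-curvature bounds of Propositions \ref{slight-leb} and \ref{fami-leb}. First dispose of the trivial case: if ${\frak C}(X)=\emptyset$ then $\beta^2(X)=0$ by Definition \ref{beta}, and both (\ref{weyl-leb-sca-1}) and (\ref{weyl-leb-sca-2}) hold automatically since their left-hand sides are integrals of non-negative quantities. So assume ${\frak C}(X)\neq\emptyset$ and fix an arbitrary Riemannian metric $g$ on $X$; this fixes the Hodge decomposition $H^2(X,{\mathbb R})={\cal H}^+_g\oplus{\cal H}^-_g$ of (\ref{h-de}). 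Apply Proposition \ref{main-leb} to produce a monopole class ${\frak a}={\frak a}(g)\in{\frak C}(X)$ with $({\frak a}^+)^2\geq\beta^2(X)$, where ${\frak a}^+$ is the ${\cal H}^+_g$-component of ${\frak a}$. Feeding this particular class into Proposition \ref{slight-leb} gives ${\int}_X s_g^2\,d{\mu}_g\geq 32{\pi}^2({\frak a}^+)^2\geq 32{\pi}^2\beta^2(X)$, which is (\ref{weyl-leb-sca-1}); feeding it into Proposition \ref{fami-leb} gives ${\int}_X(s_g-\sqrt{6}|W^+_g|)^2\,d{\mu}_g\geq 72{\pi}^2({\frak a}^+)^2\geq 72{\pi}^2\beta^2(X)$, which is (\ref{weyl-leb-sca-2}). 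Since $g$ was arbitrary, both inequalities follow; the essential point is that ${\frak a}$ and ${\frak a}^+$ depend on $g$ but $\beta^2(X)$ does not, so the resulting bounds are uniform over ${\cal R}_X$.

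For the equality discussion, suppose $X$ has a non-zero monopole class, and (the equivalence being otherwise vacuous, since equality in (\ref{weyl-leb-sca-1}) would force $s_g\equiv 0$) assume $\beta^2(X)>0$; then the class ${\frak a}$ selected above satisfies $({\frak a}^+)^2\geq\beta^2(X)>0$, so ${\frak a}^+\neq 0$. If equality holds in (\ref{weyl-leb-sca-1}), then both inequalities in the chain above are equalities, i.e. $({\frak a}^+)^2=\beta^2(X)$ and ${\int}_X s_g^2\,d{\mu}_g=32{\pi}^2({\frak a}^+)^2$, so the equality clause of Proposition \ref{slight-leb} yields an integrable complex structure $J$ with $c_1^{\mathbb R}(X,J)={\frak a}$ making $(X,g,J)$ a K{\"{a}}hler manifold of constant negative scalar curvature. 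Likewise, if equality holds in (\ref{weyl-leb-sca-2}), the equality clause of Proposition \ref{fami-leb} produces a compatible symplectic form $\omega$, with $[\omega]$ a negative multiple of ${\frak a}^+$, and the three listed pointwise conditions on $W^+_g$. These two equality cases are in fact equivalent, since on a K{\"{a}}hler surface $|W^+_g|^2=s_g^2/24$, so a constant-negative-scalar-curvature K{\"{a}}hler metric that saturates (\ref{weyl-leb-sca-1}) also saturates (\ref{weyl-leb-sca-2}).

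It remains to upgrade these (almost-)K{\"{a}}hler, constant-scalar-curvature data to a genuine K{\"{a}}hler-Einstein metric, and this is where the real work lies. From the equality case of Proposition \ref{fami-leb} one argues that the pointwise conditions on $W^+_g$ (that $\omega$ lies in the lowest eigenspace and that the two largest eigenvalues coincide), together with the constancy of $2s_g+|\nabla\omega|^2$, force $\nabla\omega=0$, so the almost-complex structure is integrable; then, since $({\frak a}^+)^2=\beta^2(X)$ pins the harmonic representative of $c_1$ into ${\cal H}^+_g$ --- equivalently, since primitive harmonic $(1,1)$-forms on a K{\"{a}}hler surface are anti-self-dual, forces the K{\"{a}}hler class to be proportional to $c_1$ --- a standard $dd^{c}$-lemma argument shows that a constant-scalar-curvature K{\"{a}}hler metric in a class proportional to $c_1$ is Einstein. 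Conversely, a K{\"{a}}hler-Einstein metric with $s_g<0$ has $c_1$ proportional to its K{\"{a}}hler class, hence $c_1$ is $g$-self-dual, $({\frak a}^+)^2=c_1^2=\beta^2(X)$, and both estimates are saturated by the same K{\"{a}}hler identity. I expect this almost-K{\"{a}}hler-to-K{\"{a}}hler-Einstein rigidity, which ultimately rests on delicate Weitzenb{\"{o}}ck-type integral identities for almost-K{\"{a}}hler $4$-manifolds, to be the principal obstacle; for it I would invoke the equality analysis already carried out in \cite{leb-17} and \cite{leb-11}.
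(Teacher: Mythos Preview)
Your approach is essentially identical to the paper's. The paper itself gives only a two-line argument: it observes that the empty case is trivial since the left-hand sides are non-negative and $\beta^2(X)=0$, and otherwise combines Propositions~\ref{main-leb}, \ref{slight-leb}, and \ref{fami-leb} exactly as you do; for the equality clause it does not carry out the rigidity analysis at all but simply refers the reader to the proof of Theorem~4.10 in \cite{leb-17}, which is precisely what you end up invoking after your more detailed (and accurate) sketch of what that analysis involves.
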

Notice that if $X$ has no monopole class, we define as $\beta^2(X):=0$ (see Definition \ref{beta} above). On the other hand, notice also that the left-hand side of these two curvature estimates in Theorem \ref{beta-ine-key} is always non-negative. Therefore, Propositions \ref{main-leb}, \ref{slight-leb} and \ref{fami-leb} indeed tell us that the desired estimates hold. To prove the statement of the boundary case, we need to analyze the curvature estimates more deeply. See the proof of Theorem 4.10 in \cite{leb-17}. \par
As a corollary of the second curvature estimate, we particularly obtain the following curvature bound (cf. Proposition 3.1 in \cite{leb-11}):
\begin{cor}\label{bound-cor}
Let $X$ be a closed oriented smooth 4-manifold with $b^+(X) \geq 2$. Then any Riemannian metric $g$ on $X$ satisfies the following curvature estimate:
\begin{eqnarray}\label{monopole-123}
\frac{1}{4{\pi}^2}{\int}_{X}\Big(2|W^{+}_{g}|^2+\frac{{s}^2_{g}}{24}\Big) d{\mu}_{g} \geq  \frac{2}{3}\beta^2(X), 
\end{eqnarray}
where $s_{g}$ and $W^{+}_{g}$ denote respectively the scalar curvature and the self-dual Weyl curvature of $g$. If $X$ has a non-zero monopole class and, moreover, equality occurs in the above estimate if and and only if $g$ is a K{\"{a}}hler-Einstein metric with negative scalar curvature. 
\end{cor}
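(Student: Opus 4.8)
The plan is to deduce (\ref{monopole-123}) from the second Seiberg--Witten curvature estimate (\ref{weyl-leb-sca-2}) of Theorem \ref{beta-ine-key} by way of a single pointwise algebraic identity, and to treat the equality case by reopening the analysis behind that estimate. First I would record the identity, valid at every point of $X$ for an arbitrary Riemannian metric $g$:
\[
2|W^{+}_{g}|^2 + \frac{s_{g}^2}{24} \;=\; \frac{1}{27}\Big(s_{g}-\sqrt{6}\,|W^{+}_{g}|\Big)^2 \;+\; \frac{1}{216}\Big(s_{g}+8\sqrt{6}\,|W^{+}_{g}|\Big)^2 .
\]
One verifies this by expanding the two squares on the right and matching the coefficients of $s_{g}^2$, of $s_{g}|W^{+}_{g}|$, and of $|W^{+}_{g}|^2$; since both summands on the right are non-negative, it gives in particular the pointwise bound $2|W^{+}_{g}|^2 + s_{g}^2/24 \geq \tfrac{1}{27}\big(s_{g}-\sqrt{6}\,|W^{+}_{g}|\big)^2$.

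Integrating this over $X$ and invoking (\ref{weyl-leb-sca-2}) yields
\[
\int_{X}\Big(2|W^{+}_{g}|^2 + \frac{s_{g}^2}{24}\Big) d\mu_{g} \;\geq\; \frac{1}{27}\int_{X}\Big(s_{g}-\sqrt{6}\,|W^{+}_{g}|\Big)^2 d\mu_{g} \;\geq\; \frac{72\pi^2}{27}\,\beta^2(X) \;=\; \frac{8\pi^2}{3}\,\beta^2(X),
\]
and dividing by $4\pi^2$ gives (\ref{monopole-123}). If $X$ has no monopole class, then $\beta^2(X)=0$ by definition and the inequality is trivial, the left-hand side being manifestly non-negative; hence for the discussion of equality one may assume that $X$ carries a non-zero monopole class.

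For the equality statement I would not treat (\ref{weyl-leb-sca-2}) as a black box, but instead argue along the lines of the boundary analysis of Theorem \ref{beta-ine-key}, i.e.\ the proof of Theorem 4.10 in \cite{leb-17}. Tracing equality back through the chain above forces, simultaneously, equality in (\ref{weyl-leb-sca-2}) --- equivalently equality in the bound of Proposition \ref{fami-leb} for a monopole class $\mathfrak a$, supplied by Proposition \ref{main-leb}, with $(\mathfrak a^{+})^2=\beta^2(X)$ --- and the vanishing of the second term, $s_{g}+8\sqrt{6}\,|W^{+}_{g}|\equiv 0$. The equality condition in Proposition \ref{fami-leb} constrains the monopole solution to an almost-K\"ahler geometry with the three curvature conditions listed there; the delicate part is to promote this to the assertion that $g$ is genuinely K\"ahler--Einstein with negative scalar curvature, then to reconcile it with the pointwise condition coming from the algebraic identity, and finally to settle the converse implication using (\ref{4-im}), the vanishing of $\stackrel{\circ}{r}_{g}$, and (\ref{weyl-leb-sca-1}).

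I expect this rigidity step --- extracting the full structure of the monopole solution from the equality hypothesis, exactly the kind of second-order curvature analysis carried out for Theorem \ref{beta-ine-key} --- to be the main obstacle; the inequality itself is a routine consequence of the algebraic identity together with the second curvature estimate.
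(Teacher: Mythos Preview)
Your derivation of the inequality is correct and takes a genuinely different route from the paper's. The paper starts from (\ref{weyl-leb-sca-2}), passes to $L^2$-norms, applies the triangle inequality $\tfrac{2}{3}\|s_g\|+\tfrac{1}{3}\|\sqrt{24}\,|W^+_g|\|\geq\|\tfrac{2}{3}s_g-2\sqrt{2/3}\,|W^+_g|\|$, and then a Cauchy--Schwarz inequality in $\mathbb{R}^2$ applied to the vectors $(\tfrac{2}{3},\tfrac{1}{3\sqrt 2})$ and $(\|s_g\|,\|\sqrt{48}\,|W^+_g|\|)$. Your pointwise sum-of-squares identity
\[
2|W^{+}_{g}|^2+\tfrac{s_g^2}{24}=\tfrac{1}{27}\big(s_g-\sqrt6\,|W^+_g|\big)^2+\tfrac{1}{216}\big(s_g+8\sqrt6\,|W^+_g|\big)^2
\]
accomplishes the same thing in one step, and is tidier: it replaces the two-stage $L^2$ argument by a single pointwise inequality, and it makes the residual term explicit. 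Both approaches lose exactly the same amount, since equality in the paper's triangle-plus-Cauchy--Schwarz chain also forces $s_g+8\sqrt6\,|W^+_g|\equiv 0$.

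For the equality discussion you are working harder than necessary. The paper does \emph{not} reopen the boundary analysis of Proposition~\ref{fami-leb} or of Theorem~4.10 in \cite{leb-17}; it simply observes that equality in (\ref{monopole-123}) forces equality in (\ref{weyl-leb-sca-2}) and then invokes the equality clause of Theorem~\ref{beta-ine-key} as a black box to conclude that $g$ is K\"ahler--Einstein with negative scalar curvature. Since your chain already isolates equality in (\ref{weyl-leb-sca-2}) as a consequence, you can do the same: there is no need to redo the almost-K\"ahler rigidity argument. Your extra pointwise condition $s_g+8\sqrt6\,|W^+_g|\equiv 0$ is an interesting byproduct of your decomposition---and in fact it shows the equality case is more restrictive than the paper states---but it plays no role in the paper's proof of the corollary.
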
 
\begin{proof}
First of all, we have the curvature estimate (\ref{weyl-leb-sca-2}): 
\begin{eqnarray}\label{dot-1}
{\int}_{X}\Big({s}_{g}-\sqrt{6}|W^{+}_{g}|\Big)^2 d{\mu}_{g} \geq 72{\pi}^{2}\beta^2(X). 
\end{eqnarray}
By multiplying this by $4/9$, we are able to get
\begin{eqnarray*}
{\int}_{X}\Big(\frac{2}{3}{s}_{g}-2\sqrt{\frac{2}{3}}|W^{+}_{g}|\Big)^2 d{\mu}_{g} \geq 32{\pi}^{2}\beta^2(X). 
\end{eqnarray*}
We are able to rewrite this estimate as follows:
\begin{eqnarray*}
||\frac{2}{3}{s}_{g}-2\sqrt{\frac{2}{3}}|W^{+}_{g}| || \geq 4\sqrt{2}{\pi}\sqrt{\beta^2(X)}, 
\end{eqnarray*}
where $|| \cdot ||$ is the $L^2$ norm with respect to $g$ and notice that we always have $\beta^2(X) \geq 0$. The rest of the proof is essentially the same with that of Proposition 3.1 in \cite{leb-11}. For completeness, let us include the proof. Indeed, by the triangle inequality, we get the following estimate from the above
\begin{eqnarray}\label{dot}
\frac{2}{3}||{s}_{g}|| +\frac{1}{3}||\sqrt{24} |W^{+}_{g}| || \geq 4\sqrt{2}{\pi}\sqrt{\beta^2(X)}. 
\end{eqnarray}
The left-hand side of this can be interpreted as the dot product in $\mathbb R^2$:
\begin{eqnarray*}
\Big(\frac{2}{3}, \frac{1}{3\sqrt{2}} \Big) \cdot \Big(||{s}_{g}||, || \sqrt{48}|W^{+}_{g}| || \Big)=\frac{2}{3}||{s}_{g}||+\frac{1}{3}||\sqrt{24} |W^{+}_{g}| ||. 
\end{eqnarray*}
By applying Cauchy-Schwartz inequality, we have
\begin{eqnarray}\label{Cauchy-Schwartz}
\Big( \Big(\frac{2}{3}\Big)^2 + \Big(\frac{1}{3\sqrt{2}} \Big)^2 \Big)^{\frac{1}{2}}\Big( {\int}_{X}({s}^2_{g}+48|W^{+}_{g}|^2)d{\mu}_{g} \Big)^{\frac{1}{2}} \geq \frac{2}{3}||{s}_{g}||+\frac{1}{3}||\sqrt{24} |W^{+}_{g}| ||.
\end{eqnarray}
On the other hand, notice that
\begin{eqnarray*}
\Big( \Big(\frac{2}{3}\Big)^2 + \Big(\frac{1}{3\sqrt{2}} \Big)^2 \Big)^{\frac{1}{2}}\Big( {\int}_{X}({s}^2_{g}+48|W^{+}_{g}|^2)d{\mu}_{g} \Big)^{\frac{1}{2}} = \frac{1}{\sqrt{2}}\Big({\int}_{X}({s}^2_{g}+48|W^{+}_{g}|^2)d{\mu}_{g}\Big)^{\frac{1}{2}}. 
\end{eqnarray*}
This with the bounds (\ref{dot}) and (\ref{Cauchy-Schwartz}) tells us that 
\begin{eqnarray*}
\frac{1}{\sqrt{2}}\Big({\int}_{X}({s}^2_{g}+48|W^{+}_{g}|^2)d{\mu}_{g}\Big)^{\frac{1}{2}} \geq 4\sqrt{2}{\pi}\sqrt{\beta^2(X)}. 
\end{eqnarray*}
Thus we have
\begin{eqnarray*}
\frac{1}{{2}}{\int}_{X}\Big({s}^2_{g}+48|W^{+}_{g}|^2 \Big)d{\mu}_{g} \geq 32{\pi}^2\beta^2(X). 
\end{eqnarray*}
This immediately implies the desired bound:
\begin{eqnarray*}
\frac{1}{4{\pi}^2}{\int}_{X}\Big(2|W^{+}_{g}|^2+\frac{{s}^2_{g}}{24}\Big) d{\mu}_{g} \geq  \frac{2}{3}\beta^2(X).
\end{eqnarray*}
Finally, if $X$ has a non-zero monopole class and, moreover, equality occurs in the above estimate, then the above argument tells us that equality occurs in (\ref{dot-1}). Therefore the last claim follows from the last assertion in Theorem \ref{beta-ine-key}. 
\end{proof} 

On the other hand, we use the following result to prove Theorem \ref{yamabe-pere} below: 
\begin{prop}[\cite{leb-17}]\label{positive-mono}
Let $X$ be a closed oriented smooth 4-manifold with $b^+(X) \geq 2$. If there is a non-zero monopole class $\frak{a} \in {H}^2(X, {\mathbb R}) - \{0\}$, then $X$ cannot admit any Riemannian metric $g$ of scalar curvature $s_{g} \geq 0$.

\end{prop}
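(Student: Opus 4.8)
\medskip
\noindent
\textbf{Proof strategy.} The plan is to run the standard Seiberg--Witten Weitzenb\"ock argument and feed it into Proposition~\ref{main-leb}, arguing by contradiction. Suppose $g$ is a Riemannian metric on $X$ with $s_{g}\geq 0$. Since the nonzero class $\frak{a}$ witnesses ${\frak C}(X)\neq\emptyset$, Proposition~\ref{main-leb} supplies a monopole class ${\frak b}\in{\frak C}(X)$ whose self-dual part, taken with respect to the Hodge decomposition $H^{2}(X,{\mathbb R})={\cal H}^{+}_{g}\oplus{\cal H}^{-}_{g}$, satisfies $({\frak b}^{+})^{2}\geq\beta^{2}(X)\geq 0$. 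By Definition~\ref{ishi-leb-2-key} there is a ${\rm spin}^{c}$ structure $\Gamma_{X}$ with ${c}^{\mathbb R}_{1}({\cal L}_{\Gamma_{X}})={\frak b}$ for which the monopole equations~(\ref{sw-mono}) admit a solution $(A,\phi)$ for this metric $g$. First I would apply the Lichnerowicz--Weitzenb\"ock formula to ${\cal D}_{A}\phi=0$, pair with $\phi$, integrate over $X$, and substitute ${F}^{+}_{A}=iq(\phi)$ together with the pointwise identity $\langle iq(\phi)\cdot\phi,\phi\rangle=\tfrac14|\phi|^{4}$; this produces an identity in which $\|\nabla_{A}\phi\|^{2}_{L^{2}}$, a positive multiple of $\int_{X}s_{g}|\phi|^{2}d\mu_{g}$, and a positive multiple of $\int_{X}|\phi|^{4}d\mu_{g}$ add to zero. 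Since $s_{g}\geq 0$, every summand is nonnegative, hence each vanishes, and in particular $\phi\equiv 0$.

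Next I would exploit $\phi\equiv 0$: the second equation in~(\ref{sw-mono}) then forces ${F}^{+}_{A}=0$, so $\tfrac{i}{2\pi}{F}_{A}$ is a closed anti-self-dual $2$-form (closed by the Bianchi identity), hence $g$-harmonic, and it represents ${c}^{\mathbb R}_{1}({\cal L}_{\Gamma_{X}})={\frak b}$. Therefore ${\frak b}\in{\cal H}^{-}_{g}$, i.e.\ ${\frak b}^{+}=0$, so $0=({\frak b}^{+})^{2}\geq\beta^{2}(X)$ and $\beta^{2}(X)=0$; if $\beta^{2}(X)>0$ this is already the contradiction sought. In general the same two steps applied to the \emph{given} nonzero monopole class ${\frak a}$ itself show ${\frak a}\in{\cal H}^{-}_{g}$, and since the intersection form is negative definite on ${\cal H}^{-}_{g}$ while ${\frak a}\neq 0$, we would obtain ${\frak a}^{2}<0$. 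Consequently, whenever ${\frak a}^{2}\geq 0$ (for instance when ${\frak a}$ is a symplectic canonical class with $2\chi+3\tau\geq 0$, or more generally whenever $\beta^{2}(X)>0$, which covers all the manifolds used later in this paper) the contradiction is immediate and we are done.

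To finish in the residual situation (every monopole class of negative square and $\beta^{2}(X)=0$) I would invoke the period map ${\cal P}\colon{\cal R}_{X}\to {Gr}^{+}_{b^{+}(X)}\big(H^{2}(X,{\mathbb R})\big)$, $g\mapsto{\cal H}^{+}_{g}$, which is an open map: the set of metrics with ${\frak a}\in{\cal H}^{-}_{g}$ is the ${\cal P}$-preimage of the \emph{proper} subset $\{H:{\frak a}\perp H\}$ (proper since ${\frak a}\neq 0$), hence has empty interior in ${\cal R}_{X}$. On the other hand, if $s_{g}\geq 0$ and $s_{g}\not\equiv 0$, then solving the Yamabe problem in $[g]$ yields a metric of \emph{strictly} positive scalar curvature, and positive scalar curvature is an open condition; so the positive-scalar-curvature metrics near $g$ form an open set which, by the first two steps, is contained in the empty-interior set above --- a contradiction. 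Thus the only metrics with $s_{g}\geq 0$ that could possibly survive are scalar-flat ones. I expect this scalar-flat borderline to be the main obstacle, since a scalar-flat metric lies on the boundary of the positive-scalar-curvature region and so the openness argument does not apply directly; ruling it out needs a finer analysis --- for example, combining the refined estimate~(\ref{sca-leb}) with its equality discussion to force such a metric to be K\"ahler--Einstein or Ricci-flat, which is incompatible with the existence of a nonzero monopole class on a $4$-manifold with $b^{+}\geq 2$. Modulo this last case the argument is complete, and the write-up should concentrate its effort there.
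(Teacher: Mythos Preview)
The paper does not supply its own proof of this proposition; it cites \cite{leb-17} (Proposition~3.3 there) and explicitly remarks that a \emph{complete} proof first appeared only in that reference. So there is no in-paper argument to compare against; I will assess your proposal on its own terms.

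Your Weitzenb\"ock step and the perturbation argument for the positive-scalar-curvature case are correct and essentially the standard line: once a PSC metric exists, the open set of PSC metrics is forced into the wall $\{g':\frak{a}^+_{g'}=0\}$, which has empty interior because the period map~(\ref{h-de-p}) is a submersion, and you obtain the contradiction. This also disposes of the case $s_g\equiv 0$ with $g$ \emph{not} Ricci-flat, since by Bourguignon (cf.\ Kazdan--Warner) such a metric can be deformed to one of positive scalar curvature.

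The genuine gap is the Ricci-flat case, and your suggested patch does not close it. The equality clause in Proposition~\ref{slight-leb} is stated only under the hypothesis $\frak{a}^+\neq 0$; but your Weitzenb\"ock step has just forced $\frak{a}^+_g=0$, so that clause yields nothing here. And the assertion that ``Ricci-flat is incompatible with a nonzero monopole class when $b^+\geq 2$'' is precisely the residual statement you must prove, not something you may invoke. One way to finish is to vary $g$ \emph{within the Ricci-flat locus} itself: closed oriented Ricci-flat $4$-manifolds with $b^+\geq 2$ are flat or $K3$, and in either case the family of Ricci-flat metrics already moves $\mathcal{H}^+_{g}$ enough (lattice deformations for flat manifolds, Torelli for $K3$) to find a Ricci-flat $g'$ with $\frak{a}^+_{g'}\neq 0$; for that $g'$ the Weitzenb\"ock identity (with $s_{g'}\equiv 0$) still forces $\phi\equiv 0$, hence $\frak{a}^+_{g'}=0$, a contradiction. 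Until you carry out an argument of this kind, the proof is incomplete --- which is exactly why the paper flags that a complete proof is nontrivial.
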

This result is well known to experts in Seiberg-Witten theory. We would like to notice that, however, a complete proof appears firstly in the proof of Proposition 3.3 in \cite{leb-17}. \par
On the other hand, there are several ways to detect the existence of monopole classes. For example, if $X$ is a closed symplectic 4-manifold $X$ with ${b}^{+}(X) \geq 2$, then $\pm c_{1}(K_{X})$ are both monopole classes by the celebrated result of Taubes \cite{t-1}, where $c_{1}({X})$ is the first Chern class of the canonical bundle of $X$. This is proved by thinking the moduli space of solutions of the Seiberg-Witten monopole equations as a cycle which represents an element of the homology of a certain configuration space. More precisely, for any closed oriented smooth 4-manifold $X$ with $b^+(X) \geq 2$, one can define the integer valued Seiberg-Witten invariant $SW_{X}(\Gamma_{X}) \in {\mathbb Z}$ for any spin${}^{c}$-structure $\Gamma_{X}$ by integrating a cohomology class on the moduli space of solutions of the Seiberg-Witten monopole equations associated to $\Gamma_{X}$:
\begin{eqnarray*}
SW_{X} : Spin(X) \longrightarrow {\mathbb Z}, 
\end{eqnarray*}
where $Spin(X)$ is the set of all spin${}^c$-structures on $X$. For more details, see \cite{w, nico}. Taubes indeed proved that, for any closed symplectic 4-manifold $X$ with ${b}^{+}(X) \geq 2$, $SW_{X}(\hat{\Gamma}_{X}) \equiv 1 \ (\bmod \ 2)$ holds for the canonical spin${}^{c}$-structure $\hat{\Gamma}_{X}$ induced from the symplectic structure. This actually implies that $\pm c_{1}(K_{X})$ are monopole classes of $X$. \par
On the other hand, there is a sophisticated refinement of the idea of this construction. It detects the presence of a monopole class by element of a stable cohomotopy group. This is due to Bauer and Furuta \cite{b-f, b-1}. They interpreted Seiberg-Witten monopole equations as a map between two Hilbert bundles over the Picard tours of a 4-manifold $X$. The map is called the Seiberg-Witten map (or monopole map). Roughly speaking, the cohomotopy refinement of the integer valued Seiberg-Witten invariant is defined by taking an equivariant stable cohomotopy class of the finite dimensional approximation of the Seiberg-Witten map. The invariant takes its value in a certain complicated equivariant stable cohomotopy group. We notice that Seiberg-Witten moduli space {\it does not} appear in their story. By using the stable cohomotopy refinement of Seiberg-Witten invariant, the following result is proved essentially by LeBrun with the present author (Proposition 10 and Corollary 11 in \cite{ishi-leb-2}):
\begin{prop}\label{prop-2}
For $i= 1,2,3,4$, suppose that $X_{i}$ is a closed almost-complex 4-manifold whose integer valued Seiberg-Witten invariant satisfies $SW_{X_{i}}(\Gamma_{X_{i}}) \equiv 1 \ (\bmod \ 2)$, where $\Gamma_{X_{i}}$ is the spin${}^c$-structure compatible with the almost-complex structure. Moreover assume that the following conditions are satisfied:
\begin{itemize}
\item $b_{1}(X_{i})=0$, \ $b^{+}(X_{i}) \equiv 3 \ (\bmod \ 4)$, 
\item $\displaystyle\sum^{4}_{i=1}b^{+}(X_{i}) \equiv 4 \ (\bmod \ 8)$. 
\end{itemize} 
Suppose that $N$ is a closed oriented smooth 4-manifold with $b^{+}(N)=0$ and let $E_{1}, E_{2}, \cdots, E_{k}$ be a set of generators for $H^2(N, {\mathbb Z})$/torsion relative to which the intersection form is diagonal. Then, for any $j=1, 2,3,4$, 
\begin{eqnarray}\label{mono-cone}
\sum^{j}_{i=1} \pm {c}_{1}(X_{i}) + \sum^{k}_{i=1} \pm{E}_{i} 
\end{eqnarray}
is a monopole class of $M:=\Big(\#^{j}_{i=1}{X}_{i} \Big) \# N$, where ${c}_{1}(X_{i})$ is the first Chern class of the canonical bundle of the almost-complex 4-manifold $X_{i}$ and the $\pm$ signs are arbitrary, and are independent of one another. Moreover, for any $j=1, 2,3,4$, 
\begin{eqnarray}\label{monopole-123446}
\beta^2(M) \geq  \sum^{j}_{i=1}{c}^2_{1}(X_{i}). 
\end{eqnarray}
\end{prop}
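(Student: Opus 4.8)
The plan is to construct the asserted monopole classes by means of the Bauer--Furuta stable cohomotopy refinement of the Seiberg--Witten invariant \cite{b-f, b-1}, and then to extract the inequality for $\beta^2(M)$ directly from Definition \ref{beta} by an elementary averaging argument inside the convex hull ${\bf{Hull}}({\frak C}(M))$. Thus the argument splits into two parts: (i) showing that, for every independent choice of the $\pm$ signs, $\sum_{i=1}^{j}\pm c_1(X_i)+\sum_{i=1}^{k}\pm E_i$ lies in ${\frak C}(M)$; and (ii) deducing from (i) that $\beta^2(M)\geq\sum_{i=1}^{j}c_1^2(X_i)$.

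For part (i) I would proceed as follows. Each $X_i$ carries the $\mathrm{spin}^c$ structure $\Gamma_{X_i}$ compatible with its almost-complex structure, with ${c}^{\mathbb R}_{1}({\cal L}_{\Gamma_{X_i}})=\pm c_1(X_i)$, and since $b_1(X_i)=0$ and $SW_{X_i}(\Gamma_{X_i})\equiv 1\ (\bmod\ 2)$ the Bauer--Furuta invariant of $(X_i,\Gamma_{X_i})$ is a well-defined element of an $S^1$-equivariant stable cohomotopy group whose non-equivariant reduction is odd, hence nonzero, so its leading term is a generator. Bauer's connected-sum theorem \cite{b-f} then identifies the Bauer--Furuta invariant of $M=(\#_{i=1}^{j}X_i)\#N$, for the $\mathrm{spin}^c$ structure obtained by gluing the $\Gamma_{X_i}$ to a chosen $\mathrm{spin}^c$ structure $\Gamma_N$ on $N$, with the smash product of the invariants of the summands. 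The numerical hypotheses $b^+(X_i)\equiv 3\ (\bmod\ 4)$ and $\sum_{i=1}^{4}b^+(X_i)\equiv 4\ (\bmod\ 8)$ are precisely what forces this smash product into a stable cohomotopy group in which the product of the leading terms is nonzero: on the non-equivariant quotient that product becomes an odd multiple of $\eta^{\,j-1}$ times a generator, $\eta$ the stable Hopf class, and $\eta^{\,j-1}\neq 0$ exactly because $j\leq 4$ (while $\eta^{4}=0$). Hence the glued $\mathrm{spin}^c$ structure has nonzero Bauer--Furuta invariant, which forces its Seiberg--Witten equations to admit a solution for every Riemannian metric on $M$; that is, ${c}^{\mathbb R}_{1}({\cal L}_{\Gamma_N})+\sum_{i=1}^{j}{c}^{\mathbb R}_{1}({\cal L}_{\Gamma_{X_i}})$ is a monopole class in the sense of Definition \ref{ishi-leb-2-key}. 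One checks that $b^+(M)=\sum_{i=1}^{j}b^+(X_i)\geq 3j\geq 2$, so the formalism of Section \ref{monopoles} indeed applies. This equivariant stable-homotopy non-vanishing is the main obstacle, and it is where the precise congruences on the $b^+(X_i)$ and the restriction $j\leq 4$ are indispensable.

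It remains to identify the classes and to deal with the summand $N$. Since $b^+(N)=0$, its intersection form is negative definite, hence diagonalizable over $\mathbb Z$ by Donaldson's theorem \cite{don}, with generators $E_1,\dots,E_k$, $E_i^2=-1$; for any signs the $\mathrm{spin}^c$ structure $\Gamma_N$ with ${c}^{\mathbb R}_{1}({\cal L}_{\Gamma_N})=\sum_{i=1}^{k}\pm E_i$ has vanishing $b^+$-part, so by the blow-up formula for Bauer--Furuta invariants its factor in the smash product is a unit and does not disturb the non-vanishing established above. Likewise, replacing $\Gamma_{X_i}$ by its conjugate reverses the sign of $c_1(X_i)$ while preserving $SW_{X_i}\ (\bmod\ 2)$, hence preserving the non-vanishing. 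Running over all $2^{j+k}$ independent sign choices would then yield that $\sum_{i=1}^{j}\pm c_1(X_i)+\sum_{i=1}^{k}\pm E_i$ is a monopole class of $M$ for every such choice, which is the first assertion.

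For the bound on $\beta^2(M)$, recall that $H^2(M,\mathbb R)\cong\big(\bigoplus_{i=1}^{j}H^2(X_i,\mathbb R)\big)\oplus H^2(N,\mathbb R)$ as an orthogonal direct sum for the intersection form, so that $c_1^2(X_i)$ computed in $M$ equals its value in $X_i$ and all cross terms among distinct summands vanish. Fixing signs $\epsilon_1,\dots,\epsilon_j$, for each $\sigma\in\{\pm 1\}^{k}$ the class ${\frak a}_\sigma:=\sum_{i=1}^{j}\epsilon_i c_1(X_i)+\sum_{l=1}^{k}\sigma_l E_l$ lies in ${\frak C}(M)$ by part (i), whence the symmetric average $2^{-k}\sum_{\sigma}{\frak a}_\sigma=\sum_{i=1}^{j}\epsilon_i c_1(X_i)$ lies in ${\bf{Hull}}({\frak C}(M))$. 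Its self-intersection equals $\big(\sum_{i=1}^{j}\epsilon_i c_1(X_i)\big)^2=\sum_{i=1}^{j}c_1^2(X_i)$ by the orthogonality just noted, and Definition \ref{beta} then gives $\beta^2(M)\geq\sum_{i=1}^{j}c_1^2(X_i)$, as claimed. (The symplectic situation, where Taubes' theorem \cite{t-1} supplies the oddness of the Seiberg--Witten invariant, is a special case of this scheme.)
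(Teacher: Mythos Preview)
Your argument is correct and follows essentially the same route as the paper. For the monopole-class assertion the paper simply invokes Proposition 10 of \cite{ishi-leb-2} rather than sketching the Bauer--Furuta smash-product computation as you do; for the bound on $\beta^2(M)$ the paper averages only the two monopole classes $\sum_i c_1(X_i)\pm\sum_l E_l$ to obtain $\sum_i c_1(X_i)\in{\bf Hull}({\frak C}(M))$, whereas you average over all $2^k$ sign choices on the $E_l$, but both convex combinations land on the same point and give the same self-intersection.
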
 
\begin{proof}
Thanks to Proposition 10 in \cite{ishi-leb-2}, it is enough to prove the bound (\ref{monopole-123446}) only. First of all, by the very definition, we have 
\begin{eqnarray*}
{\beta}^2(M):= \max \{ {\cal Q}(x):=x^2 \ | \ x \in {\bf{Hull}}({\frak C}(M)) \}.
\end{eqnarray*}
On the other hand, by (\ref{mono-cone}), we especially have the following two monopole classes of $M$:
\begin{eqnarray*}
{\frak{a}}_{1}:=\sum^{j}_{i=1} {c}_{1}(X_{i}) + \sum^{k}_{i=1} {E}_{i}, \ {\frak{a}}_{2}:=\sum^{j}_{i=1} {c}_{1}(X_{i}) - \sum^{k}_{i=1} {E}_{i}. 
\end{eqnarray*}
By (\ref{hull}), we are able to conclude that
\begin{eqnarray*}
\sum^{j}_{i=1} {c}_{1}(X_{i})= \frac{1}{2}{\frak{a}}_{1}+\frac{1}{2}{\frak{a}}_{2} \in {\bf{Hull}}({\frak C}(M)). 
\end{eqnarray*}
We therefore obtain 
\begin{eqnarray*}
{\beta}^2(M) \geq \Big( \sum^{j}_{i=1} {c}_{1}(X_{i})\Big)^2=\sum^{j}_{i=1}{c}^2_{1}(X_{i})
\end{eqnarray*}
as desired. 
\end{proof}

Notice here that, in case of $j=1$, we assume that $b_{1}=0$ and $b^{+} \equiv 3 \ (\bmod \ 4)$ hold. It turns out that, however, these conditions are superfluous though such a thing is not asserted in \cite{ishi-leb-2}. In fact, we are able to show 
\begin{prop}\label{prop-3}
Let $X$ be a closed almost-complex 4-manifold with a non-trivial integer valued Seiberg-Witten invariant $SW_{X}(\Gamma_{X}) \not=0$, where $\Gamma_{X}$ is the spin${}^c$-structure compatible with the almost complex structure. Let $N$ be a closed oriented smooth 4-manifold with $b^{+}(N)=0$ and let $E_{1}, E_{2}, \cdots, E_{k}$ be a set of generators for $H^2(N, {\mathbb Z})$/torsion relative to which the intersection form is diagonal. Then, 
\begin{eqnarray*}
\pm {c}_{1}(X) + \sum^{k}_{i=1} \pm{E}_{i} 
\end{eqnarray*}
is a monopole class of $M:={X} \# N$, where ${c}_{1}(X)$ is the first Chern class of the canonical bundle of $X$ and the $\pm$ signs are arbitrary, and are independent of one another. Moreover, the following holds:
\begin{eqnarray}\label{monopole-1234}
\beta^2(M) \geq {c}^2_{1}(X). 
\end{eqnarray}
\end{prop}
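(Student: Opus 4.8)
The plan is to reduce the whole statement to a connected-sum nonvanishing result in Seiberg-Witten theory, exactly in the spirit of the proof of Proposition \ref{prop-2}, the gain being that in the one-summand case one never needs the conditions $b_{1}(X)=0$ and $b^{+}(X)\equiv 3\ (\bmod\ 4)$ and one may relax $SW_{X}(\Gamma_{X})\equiv 1\ (\bmod\ 2)$ to $SW_{X}(\Gamma_{X})\neq 0$. Note first that $SW_{X}(\Gamma_{X})\neq 0$ being a well-defined integer presupposes $b^{+}(X)\geq 2$, so $b^{+}(M)=b^{+}(X)+b^{+}(N)=b^{+}(X)\geq 2$ and $\beta^{2}(M)$ is defined. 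Granting the first assertion, the bound (\ref{monopole-1234}) is then purely formal, just as in the proof of Proposition \ref{prop-2}: the two elements ${\frak a}_{\pm}:=c_{1}(X)\pm\sum_{i=1}^{k}E_{i}$ are both monopole classes of $M$, so by (\ref{hull}) their midpoint $c_{1}(X)=\frac{1}{2}{\frak a}_{+}+\frac{1}{2}{\frak a}_{-}$ lies in ${\bf{Hull}}({\frak C}(M))$, whence $\beta^{2}(M)\geq{\cal Q}(c_{1}(X))=c^{2}_{1}(X)$. Thus the entire content is to show that each $\pm c_{1}(X)+\sum_{i=1}^{k}\pm E_{i}$ is a monopole class of $M=X\#N$.

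To set this up, equip $X$ with its canonical spin${}^{c}$ structure $\Gamma_{X}$ (so that $c^{\mathbb R}_{1}({\cal L}_{\Gamma_{X}})=c_{1}(X)$ and $SW_{X}(\Gamma_{X})\neq 0$) and equip $N$ with the spin${}^{c}$ structure $\Gamma_{N}$ with $c^{\mathbb R}_{1}({\cal L}_{\Gamma_{N}})=\sum_{i=1}^{k}\pm E_{i}$; since the $E_{i}$ diagonalize the negative-definite intersection form of $N$ one has $c^{\mathbb R}_{1}({\cal L}_{\Gamma_{N}})^{2}=-k=\tau(N)$, the maximal value of $c_{1}^{2}$ over all spin${}^{c}$ structures on $N$. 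These glue to a spin${}^{c}$ structure $\Gamma_{M}$ on $M$ with $c^{\mathbb R}_{1}({\cal L}_{\Gamma_{M}})=c_{1}(X)+\sum_{i=1}^{k}\pm E_{i}$, and the remaining sign choices on $c_{1}(X)$ are taken care of by the symmetry ${\frak C}(M)=-{\frak C}(M)$ of Proposition \ref{mono}. By Definition \ref{ishi-leb-2-key} it now suffices to prove that the Seiberg-Witten equations for $\Gamma_{M}$ admit a solution for every Riemannian metric on $M$, and for this it is enough that some Seiberg-Witten-type invariant of $(M,\Gamma_{M})$ be nontrivial, since nontriviality is metric independent and forces the equations to be solvable for every metric.

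The core step is this nontriviality, and I would establish it using the stable cohomotopy refinement of the Seiberg-Witten invariant together with Bauer's connected-sum theorem \cite{b-f, b-1}: the Bauer-Furuta invariant of $X\#N$ is, up to suspension, the equivariant smash product of the Bauer-Furuta invariants of $X$ and of $N$. For the negative-definite summand $N$ with the spin${}^{c}$ structure $\Gamma_{N}$ realizing $c_{1}^{2}=\tau(N)$, a finite-dimensional approximation of the monopole map has only a reducible in its zero set for a generic metric --- because $b^{+}(N)=0$ makes the reducible unavoidable while the expected dimension vanishes --- so its Bauer-Furuta invariant is stably an equivariant equivalence, and smashing with an equivalence preserves nontriviality; hence the Bauer-Furuta invariant of $M$ is nontrivial as soon as that of $(X,\Gamma_{X})$ is. The latter holds because $\Gamma_{X}$, being the canonical spin${}^{c}$ structure of the almost-complex manifold $X$, has expected dimension $0$, and for it the ordinary integer invariant $SW_{X}(\Gamma_{X})\neq 0$ is recovered from the Bauer-Furuta invariant and therefore obstructs its triviality. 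This shows $c_{1}(X)+\sum_{i=1}^{k}\pm E_{i}$, hence (by symmetry) every $\pm c_{1}(X)+\sum_{i=1}^{k}\pm E_{i}$, is a monopole class of $M$, which with the convex-hull computation above completes the proof. In the special case where $N$ is diffeomorphic to $k$ copies of $\overline{{\mathbb C}{P}^2}$ one may avoid stable cohomotopy altogether and use the iterated blow-up formula $SW_{X\#k\overline{{\mathbb C}{P}^2}}(\Gamma_{X}\#\sum_{i=1}^{k}\pm E_{i})=SW_{X}(\Gamma_{X})\neq 0$.

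The main obstacle is precisely the passage from $k\overline{{\mathbb C}{P}^2}$ to an arbitrary negative-definite $N$ (with possibly $b_{1}(N)>0$): one must verify carefully that the Bauer-Furuta invariant of $N$ for the spin${}^{c}$ structure $\Gamma_{N}$ is indeed stably an equivariant equivalence, which means controlling the $S^{1}$-equivariance, the $H^{1}(N;{\mathbb R})$-worth of extra suspension, the forced reducible, and the vanishing of the expected dimension all at once, and then tracking these through Bauer's smash-product formula so that what one recovers is a genuine monopole class of $M$ in the sense of Definition \ref{ishi-leb-2-key}. Everything else --- the convex-hull computation that yields (\ref{monopole-1234}), the gluing of spin${}^{c}$ structures, and the reduction to $SW_{X}(\Gamma_{X})\neq 0$ --- is routine.
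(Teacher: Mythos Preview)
Your proposal is correct and follows essentially the same approach as the paper: both argue via the Bauer--Furuta stable cohomotopy invariant, using that the almost-complex hypothesis makes the comparison map with the integer Seiberg--Witten invariant an isomorphism (so $SW_{X}(\Gamma_{X})\neq 0$ forces a nontrivial Bauer--Furuta class on $X$), and that the negative-definite summand $N$ with the chosen spin${}^{c}$ structure contributes a stable equivalence under Bauer's connected-sum formula, whence $\pm c_{1}(X)+\sum\pm E_{i}$ are monopole classes of $M$. The paper dispatches these two steps by citation (Proposition~5.4 of \cite{b-2}, and the proofs of Proposition~6 and Corollary~8 in \cite{ishi-leb-2} together with Theorem~8.8 in \cite{b-2}) rather than sketching the mechanism as you do, and your convex-hull midpoint argument for (\ref{monopole-1234}) is identical to the paper's.
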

\begin{proof}
It is known that there is a comparision map between the stable cohomotopy refinement of Seiberg-Witten invariant and the integer valued Seiberg-Witten invariant \cite{b-f, b-2}. In particular, Proposition 5.4 in \cite{b-2} tells us that the comparision map becomes isomorphism when the given 4-manifold is almost-complex and ${b}^+ > 1$. Hence, the value of Bauer-Furuta's stable cohomotopy invariant of $X$ for the spin${}^c$-structure $\Gamma_{X}$ compatible with the almost complex structure is non-trivial if $X$ is a closed almost-complex 4-manifold with a non-trivial integer valued Seiberg-Witten invariant $SW_{X}(\Gamma_{X}) \not=0$. Moreover, the proofs of Proposition 6 and Corollary 8 in \cite{ishi-leb-2} (see also Theorem 8.8 in \cite{b-2}) imply that
\begin{eqnarray}\label{mono-cone-1}
\pm {c}_{1}(X) + \sum^{k}_{i=1} \pm{E}_{i} 
\end{eqnarray}
is indeed a monopole class of the connected sum $M:={X} \# N$. \par
On the other hand, the last claim follows as follows. Indeed, by (\ref{mono-cone-1}), we are able to obtain the following two monopole classes of $M$:
\begin{eqnarray*}
{\frak{b}}_{1}:={c}_{1}(X) + \sum^{k}_{i=1} {E}_{i}, \ {\frak{b}}_{2}:={c}_{1}(X) - \sum^{k}_{i=1} {E}_{i}. 
\end{eqnarray*}
By (\ref{hull}), we obtain
\begin{eqnarray*}
{c}_{1}(X)= \frac{1}{2}{\frak{b}}_{1}+\frac{1}{2}{\frak{b}}_{2} \in {\bf{Hull}}({\frak C}(M)). 
\end{eqnarray*}
We therefore get
\begin{eqnarray*}
{\beta}^2(M) \geq {c}^2_{1}(X) 
\end{eqnarray*}
as promised. 
\end{proof}

Theorem \ref{AIL}, Theorem \ref{beta-ine-key}, Proposition \ref{positive-mono}, Proposition \ref{prop-2}, and Proposition \ref{prop-3} together imply Theorem \ref{yamabe-pere} below. We shall use Theorem \ref{yamabe-pere} in next section. Moreover, Theorem \ref{yamabe-pere} is of interest independently of the applications to the Ricci flow. Compare Theorem \ref{yamabe-pere} with Theorem A in \cite{ishi-leb-2} and several results of \cite{fang}: 
\begin{thm}\label{yamabe-pere}
Let $N$ be a closed oriented smooth 4-manifold with $b^{+}(N)=0$. Let $X$ be a closed almost-complex 4-manifold with $b^{+}(X) \geq 2$ and $c^2_{1}(X)=2\chi(X) + 3 \tau(X) > 0$ . Assume that $X$ has a non-trivial integer valued Seiberg-Witten invariant $SW_{X}(\Gamma_{X}) \not=0$, where $\Gamma_{X}$ is the spin${}^c$-structure compatible with the almost-complex structure. Then, 
\begin{eqnarray}\label{one-ya}
{\mathcal Y}(X \# N) = \bar{\lambda}(X \# N) \leq -4{\pi}\sqrt{2c^2_{1}(X)} < 0. 
\end{eqnarray}
Moreover, if $X$ a minimal K{\"{a}}hler surface and if $N$ admits a Riemannian metric of non-negative scalar curvature, then,
\begin{eqnarray*}
{\mathcal Y}(X \# N) = \bar{\lambda}(X \# N) = -4{\pi}\sqrt{2c^2_{1}(X)} < 0. 
\end{eqnarray*}
On the other hand, let ${X}_{i}$ be as in Proposition \ref{prop-2} and assume that $\sum^j_{i=1}c^2_{1}(X_{i})=\sum^j_{i=1}(2\chi(X_{i}) + 3 \tau(X_{i})) > 0$ is satisfied, where $j=2,3,4$. For $j=2,3,4$, 
\begin{eqnarray}\label{se-ya}
{\mathcal Y}((\#^{j}_{i=1}{X}_{i}) \# N) = \bar{\lambda}((\#^{j}_{i=1}{X}_{i}) \# N) \leq -4{\pi}\sqrt{2\sum^j_{i=1}c^2_{1}(X_{i})} < 0. 
\end{eqnarray}
Moreover, if $X_{i}$ is a minimal K{\"{a}}hler surface, where $i=1,2,3,4$, and if $N$ admits a Riemannian metric of non-negative scalar curvature, then,
\begin{eqnarray*}
{\mathcal Y}((\#^{j}_{i=1}{X}_{i}) \# N) = \bar{\lambda}((\#^{j}_{i=1}{X}_{i}) \# N) = -4{\pi}\sqrt{2\sum^j_{i=1}c^2_{1}(X_{i})} < 0. 
\end{eqnarray*}
\end{thm}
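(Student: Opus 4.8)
The plan is to combine the two curvature estimates of Theorem \ref{beta-ine-key} with the monopole-class lower bounds of Propositions \ref{prop-2} and \ref{prop-3}, and then reinterpret the resulting scalar-curvature bound through the formula (\ref{yama-def-2}) for the Yamabe invariant and through Theorem \ref{AIL}. First I would treat the case $M := X \# N$. Since $X$ is almost-complex with $SW_X(\Gamma_X)\neq 0$, Proposition \ref{prop-3} shows that $M$ has a non-zero monopole class and, more importantly, that $\beta^2(M) \geq c_1^2(X) > 0$; in particular Proposition \ref{positive-mono} rules out metrics of non-negative scalar curvature, so $\mathcal{Y}(M)\leq 0$ and the formula $\mathcal{Y}(M) = -\big(\inf_g \int_M s_g^2\, d\mu_g\big)^{1/2}$ of (\ref{yama-def-2}) applies (with $n=4$), giving $|\mathcal{Y}(M)|^2 = \inf_g \int_M s_g^2\, d\mu_g$. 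Now the first estimate (\ref{weyl-leb-sca-1}) of Theorem \ref{beta-ine-key} says $\int_M s_g^2\, d\mu_g \geq 32\pi^2\beta^2(M) \geq 32\pi^2 c_1^2(X)$ for every metric $g$; taking the infimum over $g$ yields $|\mathcal{Y}(M)|^2 \geq 32\pi^2 c_1^2(X)$, i.e. $\mathcal{Y}(M) \leq -4\pi\sqrt{2 c_1^2(X)} < 0$. The identity $\mathcal{Y}(M) = \bar\lambda(M)$ in (\ref{one-ya}) is then immediate from Theorem \ref{AIL}, since $\mathcal{Y}(M) < 0 \leq 0$.

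For the equality clause, assume $X$ is a minimal Kähler surface and $N$ carries a metric of non-negative scalar curvature. The lower bound just proved is one inequality; for the reverse I would exhibit a sequence of metrics on $M$ whose $L^2$-norms of scalar curvature approach $32\pi^2 c_1^2(X)$. The standard construction is to take the Kähler–Einstein (or constant-negative-scalar-curvature Kähler) orbifold metric supplied by minimality of $X$ — here one invokes that a minimal Kähler surface of general type, or more precisely one with $c_1^2 > 0$, admits such a canonical metric on its minimal model realizing equality in (\ref{sca-leb}) — and then to perform the connected-sum-with-$N$ construction by a gluing/neck-stretching argument (the Gromov–Lawson / LeBrun-type surgery), using the non-negative scalar curvature metric on $N$ so that the glued metrics contribute negligibly to $\int s_g^2$ in the limit. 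This shows $\inf_g \int_M s_g^2\, d\mu_g \leq 32\pi^2 c_1^2(X)$, hence equality; I would cite the relevant construction in \cite{ishi-leb-2} (cf. Theorem A there) rather than redo it.

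The second half of the theorem, concerning $M := (\#_{i=1}^j X_i)\#N$ for $j = 2,3,4$, is entirely parallel: Proposition \ref{prop-2} provides the monopole classes and the bound $\beta^2(M) \geq \sum_{i=1}^j c_1^2(X_i) > 0$ under the stated hypotheses on $b_1$, $b^+$, and the congruences mod $4$ and mod $8$; then the same two steps — apply (\ref{weyl-leb-sca-1}), pass to the infimum, invoke (\ref{yama-def-2}) and Theorem \ref{AIL} — give (\ref{se-ya}), and the equality clause follows from the same gluing argument when each $X_i$ is a minimal Kähler surface and $N$ has a non-negative scalar curvature metric.

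The main obstacle is the equality (sharpness) clause, not the inequalities: the inequalities are a formal consequence of results already assembled in the excerpt, whereas showing the bound is attained requires the explicit Kähler geometry of the minimal model together with a careful connected-sum gluing estimate controlling $\int s_g^2$ in the degenerating-neck limit. The hypothesis that $N$ admits non-negative scalar curvature is exactly what makes this gluing work. I would lean on the existing treatment in \cite{ishi-leb-2} for this step and keep the new content concentrated in the clean derivation of the inequalities via $\beta^2$.
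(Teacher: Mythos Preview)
Your proposal is correct and follows essentially the same route as the paper: Proposition \ref{prop-3} (resp.\ \ref{prop-2}) gives $\beta^2(M)\geq c_1^2(X)$ (resp.\ $\sum c_1^2(X_i)$), Proposition \ref{positive-mono} forces $\mathcal{Y}(M)\leq 0$ so that (\ref{yama-def-2}) applies, and then (\ref{weyl-leb-sca-1}) together with Theorem \ref{AIL} yields the inequalities. For the equality clause the paper packages the gluing argument you describe as the subadditivity inequality $\mathcal{I}_s(X\#N)\leq \mathcal{I}_s(X)+\mathcal{I}_s(N)$ (Proposition 13 of \cite{ishi-leb-2}) combined with LeBrun's computation $\mathcal{I}_s(X)=32\pi^2 c_1^2(X)$ for minimal K\"ahler surfaces \cite{leb-4, leb-7} and $\mathcal{I}_s(N)=0$; this is the same construction you sketch, just cited more specifically.
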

\begin{proof}
First of all, the condition that $c^2_{1}(X)=2\chi(X) + 3 \tau(X) > 0$ forces that $\frak{a}:=c^{\mathbb R}_{1}({\cal L}_{\Gamma_{X}})$ is a non-zero monopole class. This fact with Proposition \ref{prop-3} allows us to conclude that the connected sum $X \# N$ has non-zero monopole classes. By Proposition \ref{positive-mono} and this fact, $X \# N$ does not admit any Riemannian metric $g$ with $s_{g} \geq 0$. This particularly implies that the Yamabe invariant of $X \# N$ is non-positive. By formula (\ref{yama-def-2}), we are able to obtain
\begin{eqnarray}\label{sca-yama}
{\mathcal Y}(X \# N) = - \Big(\inf_{g}{\int}_{X\# N}s^{{2}}_{g} d{\mu}_{g} \Big)^{{1}/{2}}. 
\end{eqnarray}
On the other hand, the bounds (\ref{weyl-leb-sca-1}) and (\ref{monopole-1234}) immediately imply
\begin{eqnarray}\label{sca-1}
{\mathcal I}_{s}(X \# N):=\inf_{g}{\int}_{X \# N}{{s}^2_{g}}d{\mu}_{g} \geq {32}{\pi}^{2}{c}^2_{1}(X).
\end{eqnarray}
We are therefore able to obtain the desired bound (\ref{one-ya}), where we used Theorem \ref{AIL}. On the ther hand, it is known that, for any minimal K{\"{a}}hler surface $X$ with $b^+(X) \geq 2$, ${\mathcal I}_{s}(X)=32{\pi}^2c^2_{1}(X)$ holds \cite{leb-4, leb-7}. Moreover, ${\mathcal I}_{s}(N)=0$ holds because we assume that $N$ admits a Riemannian metric of non-negative scalar curvature. Proposition 13 of \cite{ishi-leb-2} with these facts together tells us that 
\begin{eqnarray}\label{sca-2}
{\mathcal I}_{s}(X \# N) \leq {\mathcal I}_{s}(X) + {\mathcal I}_{s}(N) = 32{\pi}^2c^2_{1}(X).
\end{eqnarray}
It is clear that (\ref{sca-1}) and (\ref{sca-2}) imply ${\mathcal I}_{s}(X \# N)= {32}{\pi}^{2}{c}^2_{1}(X)$. This equality with (\ref{sca-yama}) and Theorem \ref{AIL} gives us the desired equality:
\begin{eqnarray*}
{\mathcal Y}(X \# N) = \bar{\lambda}(X \# N) = -4{\pi}\sqrt{2c^2_{1}(X)}. 
\end{eqnarray*}
We should notice that, in case where $b_{1}(X)=0$ and $b^{+}(X) \equiv 3 \ (\bmod \ 4)$, this result can be recovered from Theorem \ref{AIL} and Theorem A of \cite{ishi-leb-2}. Moreover, the bound (\ref{se-ya}) is also essentially proved in \cite{ishi-leb-2}. For the reader, we shall include a proof. The method is quite similar to the above. In fact, since we know that $(\#^{j}_{i=1}{X}_{i}) \# N$ has non-zero monopole classes, the bounds (\ref{weyl-leb-sca-1}) and (\ref{monopole-123446}) tell us that the following holds for $j=2,3,4$:
\begin{eqnarray*}
{\mathcal I}_{s}((\#^{j}_{i=1}{X}_{i}) \# N):=\inf_{g}{\int}_{(\#^{j}_{i=1}{X}_{i}) \# N}{{s}^2_{g}}d{\mu}_{g} \geq {32}{\pi}^{2}\sum^{j}_{i=1}{c}^2_{1}(X_{i}).
\end{eqnarray*}
This bound with Theorem \ref{AIL} implies the desired bound (\ref{se-ya}) because the existence of non-zero monopole classes of $(\#^{j}_{i=1}{X}_{i}) \# N$ forces that \begin{eqnarray}\label{sp}
{\mathcal Y}((\#^{j}_{i=1}{X}_{i}) \# N) = - \Big(\inf_{g}{\int}_{(\#^{j}_{i=1}{X}_{i}) \# N}s^{{2}}_{g} d{\mu}_{g} \Big)^{{1}/{2}}
\end{eqnarray} 
as before. On the other hand, if $X_{i}$ is a minimal K{\"{a}}hler surface, here $i=1,2,3,4$, then Proposition 13 in \cite{ishi-leb-2} tells us that
\begin{eqnarray*}
{\mathcal I}_{s}((\#^{j}_{i=1}{X}_{i}) \# N) \leq \sum^{j}_{i=1}{\mathcal I}_{s}(X_{i}) + {\mathcal I}_{s}(N) = 32{\pi}^2\sum^{j}_{i=1}{c}^2_{1}(X_{i}), 
\end{eqnarray*}
where we again used ${\mathcal I}_{s}(N)=0$. We therefore get ${\mathcal I}_{s}((\#^{j}_{i=1}{X}_{i}) \# N) = 32{\pi}^2\sum^{j}_{i=1}{c}^2_{1}(X_{i})$. This equality with (\ref{sp}) and Theorem \ref{AIL} implies 
\begin{eqnarray*}
{\mathcal Y}((\#^{j}_{i=1}{X}_{i}) \# N) = \bar{\lambda}((\#^{j}_{i=1}{X}_{i}) \# N) = -4{\pi}\sqrt{2\sum^j_{i=1}c^2_{1}(X_{i})}. 
\end{eqnarray*}
Hence we obtain the promised result. 
\end{proof} 

As was already mentioned in Introduction, it is known that the Yamabe invariant is sensitive to the choice of smooth structure of a 4-manifold. In fact, one can easily construct many examples of compact topological 4-manifolds admitting distinct smooth structures for which values of the Yamabe invariants are different by using Theorem \ref{yamabe-pere}. We leave it as an excercise for the interested reader. \par
We shall close this section with the following result. The bounds (\ref{monopole-123}), (\ref{monopole-123446}) and (\ref{monopole-1234}) immedialtely imply the following important result for our purpose:
\begin{thm}\label{key-mono-b}
Let $N$ be a closed oriented smooth 4-manifold with $b^{+}(N)=0$. Let $X$ be a closed almost-complex 4-manifold with ${b}^+(X) \geq 2$ and with a non-trivial integer valued Seiberg-Witten invariant $SW_{X_{i}}(\Gamma_{X_{i}}) \not=0$, where $\Gamma_{X}$ is the spin${}^c$-structure compatible with the almost-complex structure. Then, any Riemannian metric $g$ on the connected sum $M_{1}:={X} \# N$ satisfies 
\begin{eqnarray}\label{monopolee-1}
\frac{1}{4{\pi}^2}{\int}_{M_{1}}\Big(2|W^{+}_{g}|^2+\frac{{s}^2_{g}}{24}\Big) d{\mu}_{g} \geq \frac{2}{3} c^2_{1}({X}). 
\end{eqnarray}
On the other hand, let ${X}_{i}$ be as in Proposition \ref{prop-2}. For $j=2,3,4$, any Riemannian metric $g$ on the connected sum $M_{2}:=\Big(\#^{j}_{i=1}{X}_{i} \Big) \# N$ satisfies the following strict bound:
\begin{eqnarray}\label{monopoleee-2}
\frac{1}{4{\pi}^2}{\int}_{M_{2}}\Big(2|W^{+}_{g}|^2+\frac{{s}^2_{g}}{24}\Big) d{\mu}_{g} \geq \frac{2}{3} \sum^{j}_{i=1}c^2_{1}({X}_{i}). 
\end{eqnarray}
\end{thm}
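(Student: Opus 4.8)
The plan is to derive both inequalities by feeding the monopole-class lower bounds on $\beta^2$ into the universal curvature estimate of Corollary \ref{bound-cor}. Since Corollary \ref{bound-cor} requires $b^+\geq 2$, the first thing I would check is that this condition survives the connected sums in question: $b^+$ is additive under connected sum and $b^+(N)=0$, so $b^+(M_1)=b^+(X)\geq 2$ and $b^+(M_2)=\sum_{i=1}^{j}b^+(X_i)\geq 2$. Hence Corollary \ref{bound-cor} applies verbatim to $M_1$ and to $M_2$, yielding for every Riemannian metric $g$ the estimates $\frac{1}{4\pi^2}\int_{M_1}(2|W^+_g|^2+\frac{s^2_g}{24})\,d\mu_g\geq\frac{2}{3}\beta^2(M_1)$ and $\frac{1}{4\pi^2}\int_{M_2}(2|W^+_g|^2+\frac{s^2_g}{24})\,d\mu_g\geq\frac{2}{3}\beta^2(M_2)$.

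Next I would supply the arithmetic lower bounds on $\beta^2$. For $M_1$, the hypothesis $SW_X(\Gamma_X)\neq 0$ for the spin${}^c$ structure $\Gamma_X$ compatible with the almost-complex structure is exactly what Proposition \ref{prop-3} needs, so $\beta^2(M_1)\geq c^2_1(X)$. For $M_2$, the summands $X_i$ are taken to satisfy the hypotheses of Proposition \ref{prop-2}, so $\beta^2(M_2)\geq\sum_{i=1}^{j}c^2_1(X_i)$ for $j=2,3,4$. Substituting these two bounds into the curvature estimates of the previous paragraph gives (\ref{monopolee-1}) and (\ref{monopoleee-2}) at once.

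Since everything is already in place, there is no analytic difficulty; the work is purely a matter of matching hypotheses, so I do not expect a genuine obstacle. The two small points that deserve attention are: (i) for $M_1$ one should invoke Proposition \ref{prop-3} rather than Proposition \ref{prop-2}, precisely because the latter carries the superfluous restrictions $b_1=0$ and $b^+\equiv 3\pmod{4}$ that Proposition \ref{prop-3} dispenses with; and (ii) for $M_2$ the inequality is in fact \emph{strict}, as claimed in the statement: equality in Corollary \ref{bound-cor} would force $g$ to be a K{\"{a}}hler--Einstein metric of negative scalar curvature, which a connected sum of two or more summands each with positive $b^+$ cannot carry. I would record this last observation to justify the word ``strict'' in the theorem.
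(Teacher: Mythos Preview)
Your argument is correct and matches the paper's own: the paper simply remarks that the bounds (\ref{monopole-123}), (\ref{monopole-123446}), and (\ref{monopole-1234}) immediately imply the theorem, which is exactly the chain $\frac{1}{4\pi^2}\int(2|W^+_g|^2+s_g^2/24)\,d\mu_g\geq\frac{2}{3}\beta^2\geq\frac{2}{3}\sum c_1^2$ you spell out. Your verification that $b^+(M_1),b^+(M_2)\geq 2$ and your justification of strictness for $M_2$ are welcome additions; the paper itself defers the strictness argument to the proof of Theorem~\ref{ricci-ob-2}, where it observes that $M_2$ cannot admit a symplectic (hence K{\"a}hler--Einstein) structure.
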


\section{Obstructions to the Existence of Non-Singular Solutions to the Normalized Ricci Flow}\label{obstruction}

In this section, we shall prove new obstructions to the existence of non-singular solutions to the normalized Ricci flow by using results proved in the previous several sections. One of the main results of this section is the following: 

\begin{thm}\label{ricci-ob-1}
Let $N$ be a closed oriented smooth 4-manifold with $b^{+}(N)=0$. Let $X$ be a closed almost-complex 4-manifold with ${b}^+(X) \geq 2$ and ${c}^2_{1}(X)=2\chi(X) + 3 \tau(X)>0$. Assume that $X$ has a non-trivial integer valued Seiberg-Witten invariant $SW_{X}(\Gamma_{X}) \not=0$, where $\Gamma_{X}$ is the spin${}^c$-structure compatible with the almost-complex structure. Then, there does not exist quasi-non-singular solutions to the normalized Ricci flow in the sense of Definition \ref{bs} on a connected sum $M:=X \# N$ if the following holds:
\begin{eqnarray}\label{ob-N-Ricci}
(12b_{1}(N) + 3{b}^{-}(N)) > {c}^2_{1}(X). 
\end{eqnarray}
In particular, under this condition, there does not exist non-singular solutions to the normalized Ricci flow in the sense of Definition \ref{non-sin}. 
\end{thm}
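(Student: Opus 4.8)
The plan is to argue by contradiction, running the Fang--Zhang--Zhang/Hitchin--Thorpe-type argument of Sections \ref{ht}--\ref{ya} for the connected sum $M=X\#N$, feeding in the monopole curvature bound of Section \ref{monopoles}, and then clashing the resulting inequality with a purely topological computation of $2\chi(M)+3\tau(M)$.

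So suppose a quasi-non-singular solution $\{g(t)\}$, $t\in[0,\infty)$, to the normalized Ricci flow exists on $M$. Since $X$ is almost-complex with $b^+(X)\geq 2$, $c_1^2(X)=2\chi(X)+3\tau(X)>0$ and $SW_X(\Gamma_X)\neq 0$, and since $b^+(N)=0$, Theorem \ref{yamabe-pere} gives $\mathcal{Y}(M)\leq -4\pi\sqrt{2c_1^2(X)}<0$. Hence, by Proposition \ref{yama-b}, the solution automatically satisfies the uniform bound (\ref{FZZ-s}), so the intermediate inequality in the proof of Theorem \ref{fz-key} is available for $M$.

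Next I would repeat that integration verbatim for $M$: integrating the Gauss--Bonnet/signature identity (\ref{4-im}) over $[m,m+1]$ and using (\ref{fzz-ricci-0}) (a consequence of Proposition \ref{FZZ-prop}) to make the trace-free Ricci term disappear in the limit, one gets
\[
2\chi(M)+3\tau(M)\;\geq\;\liminf_{m\to\infty}\frac{1}{4\pi^2}\int_m^{m+1}\!\!\int_M\Big(2|W^+_{g(t)}|^2+\tfrac{s^2_{g(t)}}{24}\Big)\,d\mu_{g(t)}\,dt .
\]
Because $M=X\#N$ with $X$ almost-complex, $b^+(X)\geq 2$, $SW_X(\Gamma_X)\neq 0$ and $b^+(N)=0$, the bound (\ref{monopolee-1}) of Theorem \ref{key-mono-b} (which rests on Proposition \ref{prop-3} and Corollary \ref{bound-cor}) says that the bracketed integral is $\geq\tfrac{2}{3}c_1^2(X)$ for \emph{every} metric on $M$, whence $2\chi(M)+3\tau(M)\geq\tfrac{2}{3}c_1^2(X)$.

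Finally I would compute the left-hand side topologically. From additivity of the signature, $\chi(X\#N)=\chi(X)+\chi(N)-2$, and, since $b^+(N)=0$, $b_2(N)=b^-(N)$ and $\tau(N)=-b^-(N)$, one finds $2\chi(M)+3\tau(M)=c_1^2(X)-4b_1(N)-b^-(N)$. Plugging this in turns the inequality into $c_1^2(X)\geq 12b_1(N)+3b^-(N)$, which directly contradicts the hypothesis (\ref{ob-N-Ricci}). Hence no quasi-non-singular solution can exist, and since every non-singular solution is quasi-non-singular, no non-singular solution exists either. The only non-formal point is verifying that $M$ simultaneously meets the hypotheses of Theorems \ref{yamabe-pere}, \ref{fz-key} and \ref{key-mono-b}; the key facts are that $b^+(M)=b^+(X)\geq 2$ and that the non-trivial Seiberg--Witten invariant of $X$ survives on $M=X\#N$ (Proposition \ref{prop-3}), giving $\beta^2(M)\geq c_1^2(X)>0$ --- everything else is the FZZ integration plus elementary Euler-characteristic and signature arithmetic.
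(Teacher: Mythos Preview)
Your proof is correct and follows essentially the same route as the paper: assume a quasi-non-singular solution, use Theorem~\ref{yamabe-pere} to get $\mathcal{Y}(M)<0$, feed this into the FZZ integration (via Proposition~\ref{yama-b} and the proof of Theorem~\ref{fz-key}/\ref{bound-four}) to obtain $2\chi(M)+3\tau(M)\geq\liminf_{m\to\infty}\frac{1}{4\pi^2}\int_m^{m+1}\int_M(2|W^+|^2+\tfrac{s^2}{24})\,d\mu\,dt$, bound the integrand below by $\tfrac{2}{3}c_1^2(X)$ using (\ref{monopolee-1}), and then compare with the direct computation $2\chi(M)+3\tau(M)=c_1^2(X)-4b_1(N)-b^-(N)$ to reach a contradiction with (\ref{ob-N-Ricci}). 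The only cosmetic difference is that you spell out Proposition~\ref{yama-b} and Proposition~\ref{FZZ-prop} explicitly where the paper simply refers back to the proofs of Theorems~\ref{fz-key} and~\ref{bound-four}.
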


\begin{proof}
Suppose that there is a quasi-non-singular solution $\{g(t)\}$, $t \in [0, \infty)$, to the normalized Ricci flow on $M:=X \# N$. First of all, the bound (\ref{one-ya}) in Theorem \ref{yamabe-pere} tells us that
\begin{eqnarray*}
{\mathcal Y}(M) = \bar{\lambda}(M) \leq -4{\pi}\sqrt{2c^2_{1}(X)} < 0, 
\end{eqnarray*}
where notice that the assumption that ${c}^2_{1}(X)=2\chi(X) + 3 \tau(X)>0$. Theorem \ref{bound-four} therefore tells us that the connected sum $M$ must satisfy the strict FZZ inequality. More precisely, as was already seen in the proof of Theorem \ref{bound-four} or Theorem \ref{fz-key}, the following holds:
\begin{eqnarray*}
2\chi(M) + 3\tau(M) \geq  \liminf_{m \longrightarrow \infty}\frac{1}{4{\pi}^2}{\int}^{m+1}_{m} {\int}_{M}\Big(2|W^{+}_{g(t)}|^2+\frac{{s}^2_{g(t)}}{24}\Big) d{\mu}_{g(t)}dt. 
\end{eqnarray*}
On the other hand, by the bound (\ref{monopolee-1}) in Theorem \ref{key-mono-b}, we get the following bound for any Riemannian metric $g$ on $M$:
\begin{eqnarray*}
\frac{1}{4{\pi}^2}{\int}_{M}\Big(2|W^{+}_{g}|^2+\frac{{s}^2_{g}}{24}\Big) d{\mu}_{g} \geq \frac{2}{3} c^2_{1}({X}). 
\end{eqnarray*}
We therefore obtain 
\begin{eqnarray*}
2\chi(M) + 3\tau(M) &\geq&  \liminf_{m \longrightarrow \infty}\frac{1}{4{\pi}^2}{\int}^{m+1}_{m} {\int}_{M}\Big(2|W^{+}_{g(t)}|^2+\frac{{s}^2_{g(t)}}{24}\Big) d{\mu}_{g(t)}dt \\
&\geq&\ \frac{2}{3}c^2_{1}({X}). 
\end{eqnarray*}
On the other hand, a direct computation tells us that
\begin{eqnarray*}
2\chi(M) + 3\tau(M) &=& 2\chi(X) + 3 \tau(X) + (2\chi(N) + 3 \tau(N)) -4 \\
&=& c^2_{1}({X}) - (4b_{1}(N) + {b}^{-}(N)), 
\end{eqnarray*}
where we used the assumption that ${b}^{+}(N)=0$. We therefore obtain 
\begin{eqnarray*}
c^2_{1}({X}) - (4b_{1}(N) + {b}^{-}(N)) \geq \frac{2}{3}c^2_{1}({X}) . 
\end{eqnarray*}
Namely, 
\begin{eqnarray*}
(12b_{1}(N) + 3{b}^{-}(N)) \leq {c}^2_{1}(X). 
\end{eqnarray*}
By contraposition, we are able to get the desired result. 
\end{proof}

In Section \ref{final-main} below, we shall actually use the following special case of Theorem \ref{ricci-ob-1}, but, a slightly stronger result in a sense:
\begin{cor}\label{non-sin-cor}
Let $X$ be a closed symplectic 4-manifold with $b^{+}(X) \geq 2$ and ${c}^2_{1}(X) >0$. Then, there is no non-singular solution of the normalized Ricci flow on a connected sum $M:=X \# k{\overline{{\mathbb C}{P}^2}}$ if the following holds:
\begin{eqnarray}\label{Ricci-sym}
k \geq \frac{1}{3}{c}^2_{1}(X). 
\end{eqnarray}
\end{cor}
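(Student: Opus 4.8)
\medskip\noindent\emph{Proof proposal.}\ The plan is to deduce Corollary~\ref{non-sin-cor} from Theorem~\ref{ricci-ob-1} in all but one boundary case, and to settle that case by a separate limiting argument that uses the full strength of the hypothesis ``non-singular''. First I would set $N=k\overline{\mathbb{CP}^{2}}$, so that $b^{+}(N)=0$, $b_{1}(N)=0$, $b^{-}(N)=k$. Since $X$ is symplectic with $b^{+}(X)\geq 2$, Taubes' theorem \cite{t-1} gives $SW_{X}(\hat{\Gamma}_{X})\equiv 1\ (\bmod\ 2)$ for the canonical $\mathrm{spin}^{c}$ structure, so all hypotheses of Theorem~\ref{ricci-ob-1} hold (in particular $c^{2}_{1}(X)=2\chi(X)+3\tau(X)>0$), and for this $N$ condition~(\ref{ob-N-Ricci}) becomes $3k>c^{2}_{1}(X)$. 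Hence whenever $k>\tfrac{1}{3}c^{2}_{1}(X)$ — which is automatic when $3\nmid c^{2}_{1}(X)$, since the integer $k\geq\tfrac{1}{3}c^{2}_{1}(X)$ then already forces $k>\tfrac{1}{3}c^{2}_{1}(X)$ — Theorem~\ref{ricci-ob-1} directly shows that $M=X\#k\overline{\mathbb{CP}^{2}}$ carries no quasi-non-singular, and a fortiori no non-singular, solution of the normalized Ricci flow. It remains to treat the boundary case $3k=c^{2}_{1}(X)$, which can occur only when $3\mid c^{2}_{1}(X)$ and in which necessarily $k\geq 1$.

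In that case I would argue by contradiction: suppose $\{g(t)\}$, $t\in[0,\infty)$, is a non-singular solution on $M$. By Theorem~\ref{yamabe-pere} we have ${\mathcal Y}(M)<0$, hence by Proposition~\ref{yama-b} the solution satisfies the uniform bound~(\ref{FZZ-s}), and Proposition~\ref{FZZ-prop} gives $\int_{0}^{\infty}\!\int_{M}|\stackrel{\circ}{r}_{g(t)}|^{2}\,d\mu_{g(t)}\,dt<\infty$, so $\int_{m}^{m+1}\!\int_{M}|\stackrel{\circ}{r}_{g(t)}|^{2}\,d\mu_{g(t)}\,dt\to 0$. Running the computation in the proof of Theorem~\ref{fz-key} together with the estimate~(\ref{monopolee-1}) of Theorem~\ref{key-mono-b} (which rests on $\beta^{2}(M)\geq c^{2}_{1}(X)$ from Proposition~\ref{prop-3}) and with the count $2\chi(M)+3\tau(M)=c^{2}_{1}(X)-k=\tfrac{2}{3}c^{2}_{1}(X)$, one gets
\[
\tfrac{2}{3}c^{2}_{1}(X)=2\chi(M)+3\tau(M)=\lim_{m\to\infty}\frac{1}{4\pi^{2}}\int_{m}^{m+1}\!\!\int_{M}\Bigl(2|W^{+}_{g(t)}|^{2}+\tfrac{s_{g(t)}^{2}}{24}\Bigr)d\mu_{g(t)}\,dt\ \geq\ \tfrac{2}{3}c^{2}_{1}(X),
\]
so all the inequalities are equalities. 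Since the integrand $t\mapsto\frac{1}{4\pi^{2}}\int_{M}(2|W^{+}_{g(t)}|^{2}+s_{g(t)}^{2}/24)\,d\mu_{g(t)}+\frac{1}{8\pi^{2}}\int_{M}|\stackrel{\circ}{r}_{g(t)}|^{2}\,d\mu_{g(t)}$ is everywhere $\geq\tfrac{2}{3}c^{2}_{1}(X)$ by Corollary~\ref{bound-cor}, a mean-value argument produces times $t_{m}\to\infty$ along which both $\frac{1}{4\pi^{2}}\int_{M}(2|W^{+}_{g(t_{m})}|^{2}+s_{g(t_{m})}^{2}/24)\,d\mu_{g(t_{m})}\to\tfrac{2}{3}c^{2}_{1}(X)$ and $\int_{M}|\stackrel{\circ}{r}_{g(t_{m})}|^{2}\,d\mu_{g(t_{m})}\to 0$; write $h_{m}:=g(t_{m})$.

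I would then pass to a limit. By non-singularity the $h_{m}$ have uniformly bounded curvature and, via Shi's estimates along the flow, uniformly bounded derivatives of curvature; the volumes $\mathrm{vol}(M,h_{m})=\mathrm{vol}(M,g(0))$ are fixed, and by Theorem~\ref{bound-four} the solution does not collapse. Granting a uniform diameter bound (discussed below), Cheeger--Gromov compactness yields, after passing to a subsequence, a $C^{\infty}$ limit $h_{m}\to h_{\infty}$ on a closed $4$-manifold diffeomorphic to $M$; the $L^{2}$-smallness of $\stackrel{\circ}{r}_{h_{m}}$ forces $\stackrel{\circ}{r}_{h_{\infty}}\equiv 0$, so by Schur's lemma $h_{\infty}$ is Einstein. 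Then $\stackrel{\circ}{r}_{h_{\infty}}\equiv 0$ and the Chern--Gauss--Bonnet/signature identity~(\ref{4-im}) give $2\chi(M)+3\tau(M)=\frac{1}{4\pi^{2}}\int_{M}(2|W^{+}_{h_{\infty}}|^{2}+s_{h_{\infty}}^{2}/24)\,d\mu_{h_{\infty}}$; since this is $\geq\tfrac{2}{3}\beta^{2}(M)\geq\tfrac{2}{3}c^{2}_{1}(X)=2\chi(M)+3\tau(M)$, equality holds in~(\ref{monopole-123}), so by the rigidity clause of Corollary~\ref{bound-cor} the metric $h_{\infty}$ is Kähler--Einstein with negative scalar curvature. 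Thus $M$ would underlie a compact complex surface with ample canonical bundle, i.e.\ a minimal surface of general type with $b^{+}\geq 2$, whose Seiberg--Witten basic classes are exactly $\pm K$. But $\pm c_{1}(K_{X})$ are basic classes of $X$ (Taubes), so by the blow-up formula $M=X\#k\overline{\mathbb{CP}^{2}}$ has at least $2^{k+1}\geq 4$ distinct basic classes, contradicting $k\geq 1$. This rules out the boundary case and finishes the proof.

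The hard part will be the uniform diameter bound just invoked. Theorem~\ref{bound-four} only guarantees a point of large injectivity radius in each time slice, so to run the compactness step one must upgrade this to uniform-in-$(x,t)$ non-collapsing; the natural tool is Perelman's no-local-collapsing theorem, applied on the intervals $[0,T]$ with $T\to\infty$ and exploiting the uniform curvature bound (after translating between the normalized and unnormalized flows). Given a uniform lower bound on the volumes of unit balls, the fixed total volume forces a uniform diameter bound by covering $M$ with boundedly many unit balls, and the pointwise curvature bounds then make the Cheeger--Gromov limit a genuinely smooth Einstein metric on a manifold diffeomorphic to $M$ — ruling out a non-compact limit or an Einstein orbifold with singular points. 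By comparison, the rigidity input from Corollary~\ref{bound-cor} and the Seiberg--Witten basic-class count are routine.
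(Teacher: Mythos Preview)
Your handling of the strict case $3k>c_{1}^{2}(X)$ is identical to the paper's. For the boundary case $3k=c_{1}^{2}(X)$, however, you take a genuinely different and much heavier route. The paper's argument is essentially one line: since $k\geq 1$, the blow-up $M=X\#k\overline{{\mathbb C}{P}^{2}}$ admits no K\"ahler--Einstein metric of negative scalar curvature (stated as a known fact --- and it is exactly the Seiberg--Witten basic-class count you place at the \emph{end} of your argument), so by the rigidity clause of Corollary~\ref{bound-cor} the curvature estimate
\[
\frac{1}{4\pi^{2}}\int_{M}\Bigl(2|W^{+}_{g}|^{2}+\tfrac{s_{g}^{2}}{24}\Bigr)\,d\mu_{g}\;>\;\tfrac{2}{3}\,c_{1}^{2}(X)
\]
is \emph{strict for every metric $g$} on $M$. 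Feeding this strict bound back into the proof of Theorem~\ref{ricci-ob-1} replaces the threshold $3k>c_{1}^{2}(X)$ by $3k\geq c_{1}^{2}(X)$. That is the whole argument; no compactness is invoked.

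You instead assume a non-singular solution exists, extract --- via Shi's estimates, non-collapsing, and Cheeger--Gromov compactness --- a limiting Einstein metric on $M$ that saturates Corollary~\ref{bound-cor}, and only then derive the K\"ahler--Einstein contradiction. This detour buys you nothing the paper does not already use (the non-existence of a K\"ahler--Einstein metric is the same input either way), and it costs you precisely the ``hard part'' you flag: a uniform-in-time diameter bound via Perelman's no-local-collapsing (whose $\kappa$ a priori depends on the time interval), plus the exclusion of orbifold limits. None of that machinery is needed in the paper's proof. One could say your approach is more scrupulous about a point the paper leaves implicit --- namely, why a metric-by-metric strict inequality upgrades the final $\leq$ to a $<$ after passing to the $\liminf$ of time-averages --- but the paper simply inserts the strict bound into the inequality chain of Theorem~\ref{ricci-ob-1} and reads off the sharper conclusion, rather than constructing a saturating metric.
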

\begin{proof}
Let us again recall that a celebrated result of Taubes \cite{t-1} asserts that, for any symplectic 4-manifold with $b^+(X)>1$, the integer valued Seiberg-Witten invariant satisfies $SW_{X}(\Gamma_{X}) \equiv 1 \ (\bmod \ 2)$, where $\Gamma_{X}$ is the canonical spin${}^c$ structure compatible with the symplectic structure. Notice also that $k{\overline{{\mathbb C}{P}^2}}$ satisfies $b^+=0$. These facts with (\ref{ob-N-Ricci}) tell us that, if $3k > {c}^2_{1}(X)$, then there is no non-singular solution of the normalized Ricci flow on $M$. However, notice that the symplectic 4-manifold $M$ cannot admit any K{\"{a}}hler-Einstein metric with negative scalar curvature if $k > 0$. This particularly implies the following strict bound:
\begin{eqnarray*}
\frac{1}{4{\pi}^2}{\int}_{M}\Big(2|W^{+}_{g}|^2+\frac{{s}^2_{g}}{24}\Big) d{\mu}_{g} > \frac{2}{3} c^2_{1}({X}),  
\end{eqnarray*}
here see also Corollary \ref{bound-cor}. This bound and the above proof of Theorem \ref{ricci-ob-1} immediately implies the slightly strong bound (\ref{Ricci-sym}) as desired. 
\end{proof} 

A similar method also allows us to prove the following obstruction which is the second main result of this section:
\begin{thm}\label{ricci-ob-2}
For $i= 1,2,3,4$, let $X_{i}$ be a closed almost-complex 4-manifold whose integer valued Seiberg-Witten invariant satisfies $SW_{X_{i}}(\Gamma_{X_{i}}) \equiv 1 \ (\bmod \ 2)$, where $\Gamma_{X_{i}}$ is the spin${}^c$-structure compactible with the almost-complex structure. Assume that the following conditions are satisfied:
\begin{itemize}
\item $b_{1}(X_{i})=0$, \ $b^{+}(X_{i}) \equiv 3 \ (\bmod \ 4)$, \ $\displaystyle\sum^{4}_{i=1}b^{+}(X_{i}) \equiv 4 \ (\bmod \ 8)$, 
\item $\displaystyle\sum^j_{i=1}c^2_{1}(X_{i})=\sum^j_{i=1}(2\chi(X_{i}) + 3 \tau(X_{i})) > 0$, where $j=2,3,4$. 
\end{itemize} 
Let $N$ be a closed oriented smooth 4-manifold with $b^{+}(N)=0$. Then, for $j=2,3,4$, there does not exist quasi-non-singular solutions to the normalized Ricci flow in the sense of Definition \ref{bs} on a connected sum $M:=\Big(\#^{j}_{i=1}{X}_{i} \Big) \# N$ if the following holds:
\begin{eqnarray*}
12(j-1)+(12b_{1}(N) + 3{b}^{-}(N)) \geq  \sum^{j}_{i=1}{c}^2_{1}(X_{i}) . 
\end{eqnarray*}
In particular, under this condition, there does not exist non-singular solutions to the normalized Ricci flow on $M$ in the sense of Definition \ref{non-sin}. 
\end{thm}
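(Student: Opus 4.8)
The plan is to mimic the proof of Theorem~\ref{ricci-ob-1}, replacing the inputs attached to $X\#N$ by the ones attached to $\big(\#^{j}_{i=1}X_i\big)\#N$, namely Proposition~\ref{prop-2} and the strict bound~(\ref{monopoleee-2}) of Theorem~\ref{key-mono-b}. Fix $j\in\{2,3,4\}$ and suppose, for contradiction, that there is a quasi-non-singular solution $\{g(t)\}$, $t\in[0,\infty)$, to the normalized Ricci flow on $M:=\big(\#^{j}_{i=1}X_i\big)\#N$. Since the $X_i$ satisfy the hypotheses of Proposition~\ref{prop-2} and $\sum^{j}_{i=1}c^2_1(X_i)>0$, the bound~(\ref{se-ya}) of Theorem~\ref{yamabe-pere} gives ${\mathcal Y}(M)=\bar{\lambda}(M)\leq -4\pi\sqrt{2\sum^{j}_{i=1}c^2_1(X_i)}<0$. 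Hence Proposition~\ref{yama-b} shows the solution satisfies the uniform bound~(\ref{FZZ-s}), and since it is quasi-non-singular Proposition~\ref{FZZ-prop} applies; running the argument in the proof of Theorem~\ref{fz-key} (Chern--Gauss--Bonnet and the signature formula together with $\int^{m+1}_{m}\int_M|\stackrel{\circ}{r}_{g(t)}|^2\,d\mu_{g(t)}\,dt\to0$) then yields
\begin{eqnarray*}
2\chi(M)+3\tau(M)\ \geq\ \liminf_{m\to\infty}\frac{1}{4\pi^2}\int^{m+1}_{m}\int_M\Big(2|W^+_{g(t)}|^2+\frac{s^2_{g(t)}}{24}\Big)\,d\mu_{g(t)}\,dt .
\end{eqnarray*}

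Next I would invoke the curvature bound~(\ref{monopoleee-2}) of Theorem~\ref{key-mono-b}, namely that every Riemannian metric $g$ on $M$ satisfies $\frac{1}{4\pi^2}\int_M(2|W^+_g|^2+s^2_g/24)\,d\mu_g\geq\frac{2}{3}\sum^{j}_{i=1}c^2_1(X_i)$, and note that for $j\geq2$ this inequality is in fact strict for every $g$. Indeed, $M$ carries no K\"ahler--Einstein metric of negative scalar curvature: such a metric would exhibit $M$ as a minimal complex surface of general type, which is impossible since $M$, being a connected sum of at least two pieces each with $b^+>1$, has all of its ordinary Seiberg--Witten invariants equal to zero; therefore the equality case of Corollary~\ref{bound-cor} never occurs on $M$. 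Arguing as in the proof of Corollary~\ref{non-sin-cor}, this strict pointwise bound upgrades the previous display to $2\chi(M)+3\tau(M)>\frac{2}{3}\sum^{j}_{i=1}c^2_1(X_i)$.

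It then remains to evaluate $2\chi(M)+3\tau(M)$ combinatorially. Signature is additive under connected sum, each of the $j$ gluings forming $M$ lowers the Euler characteristic by $2$, and $c^2_1(X_i)=2\chi(X_i)+3\tau(X_i)$; moreover $b^+(N)=0$ forces $\chi(N)=2-2b_1(N)+b^-(N)$ and $\tau(N)=-b^-(N)$. Putting these together gives
\begin{eqnarray*}
2\chi(M)+3\tau(M)\ =\ \sum^{j}_{i=1}c^2_1(X_i)-4(j-1)-4b_1(N)-b^-(N) .
\end{eqnarray*}
Substituting this into the strict inequality above and multiplying by $3$ yields $\sum^{j}_{i=1}c^2_1(X_i)>12(j-1)+12b_1(N)+3b^-(N)$; the theorem follows by contraposition, and the statement for non-singular solutions is immediate since every non-singular solution is quasi-non-singular. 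The step I expect to be the main obstacle is exactly this passage to a \emph{strict} inequality: one must be sure that the strict pointwise curvature bound is not lost on taking the $\liminf$ along the flow, i.e.\ that the Weyl-plus-scalar functional of $g(t)$ cannot approach the monopole value $\frac{2}{3}\sum^{j}_{i=1}c^2_1(X_i)$ as $t\to\infty$. This is the same delicate point already used (for connected sums with $\overline{{\mathbb C}{P}^2}$) in the proof of Corollary~\ref{non-sin-cor}, and I would handle it in the same way there, exploiting that $M$ admits no negative K\"ahler--Einstein metric so that equality in Corollary~\ref{bound-cor} is unattainable.
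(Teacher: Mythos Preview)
Your argument matches the paper's proof essentially line for line: the paper also invokes (\ref{se-ya}) to get ${\mathcal Y}(M)<0$, then the FZZ machinery from Theorems~\ref{fz-key}/\ref{bound-four} to obtain the $\liminf$ bound, and then the strict form of (\ref{monopoleee-2}) --- justified there by the remark that $M$ ``admits non-zero monopole classes and cannot admit symplectic structures,'' which via Theorem~\ref{beta-ine-key} rules out the K\"ahler--Einstein equality case just as your vanishing--SW argument does --- to reach $2\chi(M)+3\tau(M)>\frac{2}{3}\sum_{i=1}^{j} c^2_1(X_i)$, followed by the identical combinatorial computation of $2\chi(M)+3\tau(M)$. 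Regarding your flagged obstacle, the paper makes precisely the same jump you are worried about: it writes the chain $2\chi(M)+3\tau(M)\geq\liminf\cdots>\frac{2}{3}\sum c^2_1(X_i)$ without further justification for why the strict pointwise inequality survives the $\liminf$, so your proof is no less (and no more) complete than the original on this point.
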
 
\begin{proof}
Suppose now that there is a quasi-non-singular solution $\{g(t)\}$, $t \in [0, \infty)$, to the normalized Ricci flow on $M$. The bound (\ref{se-ya}) in Theorem \ref{yamabe-pere} tells us that
\begin{eqnarray*}
{\mathcal Y}(M) = \bar{\lambda}(M) \leq -4{\pi}\sqrt{2\sum^{j}_{i=1}c^2_{1}(X_{i})} < 0. 
\end{eqnarray*}
This particularly tells us that, as before, the following must hold (see the proofs of Theorem \ref{fz-key} and Theorem \ref{bound-four} above)
\begin{eqnarray*}
2\chi(M) + 3\tau(M) \geq  \liminf_{m \longrightarrow \infty}\frac{1}{4{\pi}^2}{\int}^{m+1}_{m} {\int}_{M}\Big(2|W^{+}_{g(t)}|^2+\frac{{s}^2_{g(t)}}{24}\Big) d{\mu}_{g(t)}dt.  
\end{eqnarray*}
On the other hand, notice that the connected sum $M$ admits non-zero monopole classes and cannot admit symplectic structures. This fact and Theorem \ref{beta-ine-key} tell us that the bound (\ref{monopoleee-2}) must be strict: 
\begin{eqnarray*}
\frac{1}{4{\pi}^2}{\int}_{M}\Big(2|W^{+}_{g(t)}|^2+\frac{{s}^2_{g(t)}}{24}\Big) d{\mu}_{g(t)} > \frac{2}{3}\sum^{j}_{i=1}{c}^2_{1}(X_{i}). 
\end{eqnarray*}
We therefore obtain
\begin{eqnarray*}
2\chi(M) + 3\tau(M) &\geq&  \liminf_{m \longrightarrow \infty}\frac{1}{4{\pi}^2}{\int}^{m+1}_{m} {\int}_{M}\Big(2|W^{+}_{g(t)}|^2+\frac{{s}^2_{g(t)}}{24}\Big) d{\mu}_{g(t)}dt \\
&>&\ \frac{2}{3}\sum^{j}_{i=1}{c}^2_{1}(X_{i}). 
\end{eqnarray*}
On the other hand, a direct computation implies 
\begin{eqnarray*}
2\chi(M) + 3\tau(M) &=& \sum^{j}_{i=1}(2\chi(X_{i}) + 3 \tau(X_{i})) + (2\chi(N) + 3 \tau(N)) -4j \\
&=& -(4b_{1}(N) + {b}^{-}(N)) - 4(j-1) + \sum^{j}_{i=1}{c}^2_{1}(X_{i}) , 
\end{eqnarray*}
where we used the assumption that ${b}^{+}(N)=0$. We therefore get 
\begin{eqnarray*}
-(4b_{1}(N) + {b}^{-}(N)) - 4(j-1)+\sum^{j}_{i=1}{c}^2_{1}(X_{i})  > \frac{2}{3}\sum^{j}_{i=1}{c}^2_{1}(X_{i}). 
\end{eqnarray*}
Namely, we have 
\begin{eqnarray*}
12(j-1)+(12b_{1}(N) + 3{b}^{-}(N)) < \sum^{j}_{i=1}{c}^2_{1}(X_{i}). 
\end{eqnarray*}
By contraposition, the desired result follows. 
\end{proof}  

Theorem \ref{ricci-ob-2}, a result of Taubes \cite{t-1} and the fact that a connected sum $k{\overline{{\mathbb C}{P}^2}} \# {\ell}({S^1} \times {S}^3)$ satisfies $b^+=0$ enable us to prove
\begin{cor}\label{main-cor}
For $i=1,2,3,4$, let ${X}_{i}$ be a simply connected closed symplectic 4-manifold satifying
\begin{itemize}
\item $b^{+}(X_{i}) \equiv 3 \ (\bmod \ 4)$, \ $\displaystyle\sum^{4}_{i=1}b^{+}(X_{i}) \equiv 4 \ (\bmod \ 8)$, 
\item $\displaystyle\sum^j_{i=1}c^2_{1}(X_{i})=\sum^j_{i=1}(2\chi(X_{i}) + 3 \tau(X_{i})) > 0$, where $j=2,3,4$. 
\end{itemize} 
Then, for $j=2,3,4$, there is also no non-singular solution to the normalized Ricci flow on a connected sum $\Big(\#^{j}_{i=1}{X}_{i} \Big) \# k{\overline{{\mathbb C}{P}^2}} \# {\ell}({S^1} \times {S}^3)$ if the following holds:
\begin{eqnarray*}\label{Ricci-sym-1}
12(j-1)+12{\ell}+3k \geq  \sum^{j}_{i=1}{c}^2_{1}(X_{i}). 
\end{eqnarray*}
Similarly, for $j=2,3,4$, there is also no non-singular solution to the normalized Ricci flow on $\#^{j}_{i=1}{X}_{i}$ if the following holds:
\begin{eqnarray*}\label{ishi-leb-ob-22222}
12(j-1) \geq \sum^{j}_{i=1}{c}^2_{1}(X_{i}). 
\end{eqnarray*}
\end{cor}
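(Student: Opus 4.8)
The plan is to obtain Corollary \ref{main-cor} as a direct consequence of Theorem \ref{ricci-ob-2}, the only work being to check the hypotheses of that theorem for the given symplectic building blocks and to compute the relevant Betti numbers of the auxiliary summand.

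First I would verify that the family $X_1,\dots,X_4$ qualifies as the collection of almost-complex $4$-manifolds appearing in Theorem \ref{ricci-ob-2}. Each $X_i$ is simply connected, so $b_1(X_i)=0$. From $b^{+}(X_i)\equiv 3 \pmod 4$ we get $b^{+}(X_i)\in\{3,7,11,\dots\}$, hence $b^{+}(X_i)\geq 2$; therefore the celebrated theorem of Taubes \cite{t-1} applies and yields $SW_{X_i}(\Gamma_{X_i})\equiv 1 \pmod 2$ for the spin${}^c$ structure $\Gamma_{X_i}$ compatible with the symplectic form, which is in particular non-trivial. The remaining hypotheses, namely the congruence $\sum_{i=1}^{4} b^{+}(X_i)\equiv 4 \pmod 8$ and the positivity $\sum_{i=1}^{j} c_{1}^{2}(X_i)=\sum_{i=1}^{j}(2\chi(X_i)+3\tau(X_i))>0$ for $j=2,3,4$, are assumed outright in the statement of the corollary.

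Next I would take $N:=k\,{\overline{{\mathbb C}{P}^2}}\#\ell(S^1\times S^3)$ as the manifold $N$ of Theorem \ref{ricci-ob-2}. Since ${\overline{{\mathbb C}{P}^2}}$ has negative definite intersection form and $S^1\times S^3$ has vanishing second Betti number, and since $b^{+}$ is additive under connected sum, we get $b^{+}(N)=0$, so $N$ is admissible. The same additivity, together with $b_1(S^1\times S^3)=1$, $b_1({\overline{{\mathbb C}{P}^2}})=0$, $b^{-}({\overline{{\mathbb C}{P}^2}})=1$, $b^{-}(S^1\times S^3)=0$, gives $b_1(N)=\ell$ and $b^{-}(N)=k$. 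Substituting these values, the hypothesis $12(j-1)+(12 b_1(N)+3b^{-}(N))\geq \sum_{i=1}^{j} c_{1}^{2}(X_i)$ of Theorem \ref{ricci-ob-2} becomes precisely $12(j-1)+12\ell+3k\geq \sum_{i=1}^{j} c_{1}^{2}(X_i)$, so that theorem rules out quasi-non-singular solutions on $(\#_{i=1}^{j} X_i)\# N$ and, a fortiori, non-singular ones. The second assertion is the special case $N=S^4$ (equivalently $k=\ell=0$), for which $b_1(N)=b^{-}(N)=0$ and the inequality reduces to $12(j-1)\geq \sum_{i=1}^{j} c_{1}^{2}(X_i)$.

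Since everything reduces to invoking Theorem \ref{ricci-ob-2}, there is no genuine obstacle here; the two points needing a little care are (i) noting that $b^{+}(X_i)\equiv 3 \pmod 4$ forces $b^{+}(X_i)\geq 2$, so that Taubes's non-vanishing, and hence the identification of the $X_i$ as the almost-complex pieces of Theorem \ref{ricci-ob-2}, is legitimate, and (ii) the elementary bookkeeping of $b_1$ and $b^{-}$ for $k\,{\overline{{\mathbb C}{P}^2}}\#\ell(S^1\times S^3)$. It is also worth recalling that $\#_{i=1}^{j} X_i$ with $j\geq 2$ admits no symplectic structure and is not K\"{a}hler--Einstein with negative scalar curvature, which is exactly what makes the relevant curvature estimate strict inside the proof of Theorem \ref{ricci-ob-2}; but that strictness is already incorporated there, so nothing additional is required in the present argument.
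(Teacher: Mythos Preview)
Your proposal is correct and follows essentially the same approach as the paper: the paper simply notes that Theorem \ref{ricci-ob-2}, Taubes's nonvanishing result \cite{t-1}, and the observation that $k\overline{{\mathbb C}{P}^2}\#\ell(S^1\times S^3)$ has $b^+=0$ together yield the corollary, and your argument is a careful spelling-out of exactly these ingredients.
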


Let us close this section with the following result. Though it is not used in what follows, perhaps, it is worth pointing out that the following holds (cf. Corollary 1.5 in \cite{fz-1}, Theorems 5.1 and 5.2 in \cite{leb-17}):
\begin{thm}\label{ricci-ein-ob}
Let $X$ be a closed oriented smooth 4-manifold with $b^+(X) \geq 2$. Suppose that there is a quasi-non-singular solution $\{g(t)\}$, $t \in [0, \infty)$, to the normalized Ricci flow in the sense of Definition \ref{bs}. If the Yamabe invariant of $X$ is negative, i.e., ${\mathcal Y}(X)<0$, then the following two inequalities hold:
\begin{eqnarray}\label{mi-ya-ricci-2}
2 \chi(X) +3\tau(X) \geq \frac{2}{3}\beta^2(X), 
\end{eqnarray}
\begin{eqnarray}\label{mi-ya-ricci}
2 \chi(X) -3\tau(X) \geq \frac{1}{3}\beta^2(X). 
\end{eqnarray}
In particular, if $X$ is a closed almost-complex 4-manifold with a non-trivial integer valued Seiberg-Witten invariant $SW_{X}(\Gamma_{X}) \not=0$, where $\Gamma_{X}$ is the spin${}^c$-structure compatible with the almost-complex structure, then the bound (\ref{mi-ya-ricci}) implies the Bogomolov-Miyaoka-Yau type inequality:
\begin{eqnarray*}
\chi(X) \geq 3\tau(X). 
\end{eqnarray*}
\end{thm}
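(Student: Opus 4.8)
The plan is to run the argument used in the proof of Theorem \ref{bound-four}, but to feed in the monopole curvature estimates of Section \ref{monopoles} in place of the Yamabe bound. Since ${\mathcal Y}(X)<0$, Proposition \ref{yama-b} shows that the given quasi-non-singular solution $\{g(t)\}$ automatically satisfies the uniform bound (\ref{FZZ-s}); hence Proposition \ref{FZZ-prop} applies, and, exactly as in the proof of Theorem \ref{fz-key}, combining the identity (\ref{4-im}) (valid for every $g(t)$) with the decay (\ref{fzz-ricci-0}) of $\int_X|\mathring{r}_{g(t)}|^2\,d\mu_{g(t)}$ over unit time-intervals, and using that $\liminf_m(a_m-b_m)\ge\liminf_m a_m$ whenever $b_m\ge 0$ and $b_m\to 0$, one obtains
\[
2\chi(X)\pm 3\tau(X)\ \ge\ \liminf_{m\to\infty}\frac{1}{4\pi^2}\int_m^{m+1}\!\!\int_X\Big(2|W^{\pm}_{g(t)}|^2+\frac{s^2_{g(t)}}{24}\Big)\,d\mu_{g(t)}\,dt .
\]

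To get (\ref{mi-ya-ricci-2}) I would apply Corollary \ref{bound-cor} to each metric $g(t)$ (the hypothesis $b^+(X)\ge 2$ is exactly what makes $\beta^2(X)$ available and the corollary applicable): the integrand is then $\ge \tfrac23\beta^2(X)$ for every $t$, so integrating over $[m,m+1]$ and taking the $\liminf$ yields $2\chi(X)+3\tau(X)\ge\tfrac23\beta^2(X)$. To get (\ref{mi-ya-ricci}) I take the minus sign above, discard the nonnegative term $2|W^{-}_{g(t)}|^2$, and use the scalar-curvature monopole bound (\ref{weyl-leb-sca-1}), $\int_X s^2_{g(t)}\,d\mu_{g(t)}\ge 32\pi^2\beta^2(X)$, to conclude
\[
2\chi(X)-3\tau(X)\ \ge\ \liminf_{m\to\infty}\frac{1}{96\pi^2}\int_m^{m+1}\!\!\int_X s^2_{g(t)}\,d\mu_{g(t)}\,dt\ \ge\ \frac{32\pi^2}{96\pi^2}\,\beta^2(X)=\frac13\beta^2(X).
\]

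For the last statement, assume $X$ is almost-complex with $SW_X(\Gamma_X)\ne 0$ for the compatible $\mathrm{spin}^c$ structure $\Gamma_X$. Then $c^{\mathbb R}_1({\cal L}_{\Gamma_X})$ is a monopole class, hence lies in ${\frak C}(X)\subset{\bf{Hull}}({\frak C}(X))$, so by Definition \ref{beta},
\[
\beta^2(X)\ \ge\ \big(c^{\mathbb R}_1({\cal L}_{\Gamma_X})\big)^2\ =\ c^2_1(X)\ =\ 2\chi(X)+3\tau(X).
\]
Substituting into (\ref{mi-ya-ricci}) gives $2\chi(X)-3\tau(X)\ge\tfrac13\big(2\chi(X)+3\tau(X)\big)$, i.e. $6\chi(X)-9\tau(X)\ge 2\chi(X)+3\tau(X)$, which rearranges to $4\chi(X)\ge 12\tau(X)$, that is $\chi(X)\ge 3\tau(X)$. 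The only point requiring genuine care is the passage to the $\liminf$ together with the discarding of the $|\mathring{r}_{g(t)}|^2$ (and $|W^{-}_{g(t)}|^2$) terms, but this is precisely the bookkeeping already carried out in Section \ref{ht}; beyond it the proof uses only Proposition \ref{yama-b}, Corollary \ref{bound-cor}, the bound (\ref{weyl-leb-sca-1}), and the elementary fact that a monopole class belongs to the convex hull of the set of monopole classes.
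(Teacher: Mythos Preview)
Your proof is correct and follows essentially the same route as the paper's own argument: both use the negativity of ${\mathcal Y}(X)$ together with Proposition \ref{yama-b} and the machinery of Section \ref{ht} to obtain the $\liminf$ inequality, then feed in the monopole bound (\ref{monopole-123}) of Corollary \ref{bound-cor} for (\ref{mi-ya-ricci-2}) and the scalar bound (\ref{weyl-leb-sca-1}) for (\ref{mi-ya-ricci}), and finally use that $c_1(X)$ is a monopole class (hence lies in ${\bf Hull}({\frak C}(X))$) to get $\beta^2(X)\geq c_1^2(X)$ and the Bogomolov--Miyaoka--Yau type inequality. The only cosmetic difference is the order in which the two inequalities are treated.
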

\begin{proof}
By the assumption that ${\mathcal Y}(X)<0$ and the proof of Theorem \ref{bound-four} above, we know that the existence of quasi-non-singular solution $\{g(t)\}$, $t \in [0, \infty)$, to the normalized Ricci flow implies the following: 
\begin{eqnarray*}
2\chi(M) \pm 3\tau(M) \geq  \liminf_{m \longrightarrow \infty}\frac{1}{4{\pi}^2}{\int}^{m+1}_{m} {\int}_{M}\Big(2|W^{\pm}_{g(t)}|^2+\frac{{s}^2_{g(t)}}{24}\Big) d{\mu}_{g(t)}dt.  
\end{eqnarray*}
The inequality (\ref{mi-ya-ricci}) is derived from this and (\ref{weyl-leb-sca-1}). In fact, 
\begin{eqnarray*}
2\chi(X) - 3\tau(X) &\geq&  \liminf_{m \longrightarrow \infty}\frac{1}{4{\pi}^2}{\int}^{m+1}_{m} {\int}_{X}\Big(2|W^{-}_{g(t)}|^2+\frac{{s}^2_{g(t)}}{24}\Big) d{\mu}_{g(t)}dt \\
 &\geq& \liminf_{m \longrightarrow \infty}\frac{1}{96{\pi}^2}{\int}^{m+1}_{m} {\int}_{X}{{s}^2_{g(t)}} d{\mu}_{g(t)}dt \\
 &\geq& \frac{1}{3}\beta^2(X).  
\end{eqnarray*}
We used (\ref{weyl-leb-sca-1}) in the last part. Moreover, suppose that $X$ is a closed almost-complex 4-manifold with a non-trivial integer valued Seiberg-Witten invariant $SW_{X}(\Gamma_{X}) \not=0$. Then, the bound (\ref{monopole-1234})  particularly tells us that the following holds:
\begin{eqnarray}
\beta^2(X) \geq {c}^2_{1}(X)= 2\chi(X) + 3\tau(X). 
\end{eqnarray}
We therefore get 
$$ 
2\chi(X) - 3\tau(X) \geq \frac{1}{3}(2\chi(X) + 3\tau(X)). 
$$
Namely, we obtain 
\begin{eqnarray*}
\chi(X) \geq 3\tau(X)
\end{eqnarray*}
as promised. \par
Finally, we also have 
\begin{eqnarray*}
2\chi(M) + 3\tau(M) \geq \liminf_{m \longrightarrow \infty}\frac{1}{4{\pi}^2}{\int}^{m+1}_{m} {\int}_{M}\Big(2|W^{+}_{g(t)}|^2+\frac{{s}^2_{g(t)}}{24}\Big) d{\mu}_{g(t)}dt. 
\end{eqnarray*}
This bound with (\ref{monopole-123}) immediately implies the desired inequality: 
\begin{eqnarray*}
2\chi(M) + 3\tau(M) \geq  \frac{2}{3}\beta^2(X).  
\end{eqnarray*}
Hence the claim follows. 
\end{proof}

\begin{rmk}
Both (\ref{mi-ya-ricci-2}) and (\ref{mi-ya-ricci}) still hold even if $\beta^2(X)$ is replaced by $\alpha^2(X)$ which is introduced in \cite{leb-12, leb-17}. For the reader, let us recall briefly the defintion of $\alpha^2(X)$. Let $X$ be a closed oriented smooth 4-manifold with $b^+(X) \geq 2$. Consider the Grassmannian ${\bf Gr}:={Gr}^+_{b^+} \Big(H^2(X, {\mathbb R}) \Big)$ which consists of all maximal linear subspaces $\bf H$ of $H^2(X, {\mathbb R})$ on which the intersection form of $X$ is positive definite. For each element ${\bf H} \in {\bf Gr}$, we have an orthogonal decomposition with respect to the intersection form:
$$
H^2(X, {\mathbb R}) = {\bf H} \oplus \overline{\bf H}. 
$$
Hence, for a given monopole class $\frak{a} \in {\frak C}(X)$ and an element ${\bf H} \in {\bf Gr}$, one can define $\frak{a}^+$ to be the orthogonal projection of $\frak{a}$ to ${\bf H}$. Using this projection, we can define the following natural quantity:
$$
\alpha^2(X) := \inf_{{\bf H} \in {\bf Gr}} \Big(\max_{\frak{a}\in {\frak C}(X)}(\frak{a}^+)^2 \Big). 
$$
Though this definition is totally different from that of $\beta^2(X)$, it is observed in \cite{leb-17} that $\alpha^2(X) = \beta^2(X)$ actually occurs in many cases. In this direction, see Section 5 of \cite{leb-17}. 
\end{rmk}

\section{Proof of Theorem \ref{main-A}}\label{final-main}

In this section, we shall give a proof of Theorem \ref{main-A}. In what follows, we shall use the following notation:
\begin{eqnarray*}
{\chi}_{h}(X):=\frac{1}{4}\Big(\chi(X) + \tau(X)\Big), \ {c}^{2}_{1}(X):=2\chi(X) + 3\tau(X)
\end{eqnarray*}
for any 4-manifold $X$. \par
First of all, we shall prove the following result by using the obstruction proved in Corollary \ref{non-sin-cor} above:
\begin{prop}\label{non-prop}
For every $\delta>0$, there exists a constant $d_\delta>0$ satisfying the following property: every lattice point $(\alpha, \beta)$ satisfying
\begin{eqnarray}\label{geo}
0 < \beta \leq (6-\delta)\alpha-d_\delta
\end{eqnarray}
is realized by $({\chi}_{h}, {c}^{2}_{1})$ of infinitely many pairwise non-diffeomorphic simply connected symplectic 4-manifolds with the following properties:
\begin{itemize}
\item each symplectic 4-manifold $N$ is non-spin, 
\item each symplectic 4-manifold $N$ has negative Yamabe and Pelerman's $\bar{\lambda}$ invariant, i.e., ${\mathcal Y}(N)=\bar{\lambda}(N) <0$, 
\item on each symplectic 4-manifold $N$, there exists no quasi-non-singular solution of the normalized Ricci flow in the sense of Definition \ref{bs}. In particular, there is also no non-singular solution of the normalized Ricci flow in the sense of Definition \ref{non-sin}. 
\end{itemize}

\end{prop}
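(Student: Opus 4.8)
The plan is to produce the manifolds of Proposition~\ref{non-prop} as symplectic blow-ups $Z := X \# k\,\overline{{\mathbb C}{P}^2}$ of carefully chosen simply connected symplectic source manifolds $X$, and then to read off the three properties from Theorem~\ref{yamabe-pere} and Theorem~\ref{ricci-ob-1}. Fix $\delta > 0$ and let $(\alpha,\beta)$ be a lattice point with $0 < \beta \leq (6-\delta)\alpha - d_\delta$, where $d_\delta$ is a constant to be pinned down below. Set
\begin{eqnarray*}
k := \lfloor \beta/2 \rfloor + 1 , \qquad \alpha' := \alpha , \qquad \beta' := \beta + k ,
\end{eqnarray*}
so that $2k > \beta$, equivalently $3k > \beta + k = \beta'$, and $\beta' \leq \frac{3}{2}\beta + 1$. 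Because $k \geq 1$, the connected sum $Z$ is automatically non-spin, and it is symplectic since a symplectic blow-up is symplectic; moreover $\pi_1(Z) = \pi_1(X)$, so $Z$ is simply connected whenever $X$ is, and
\begin{eqnarray*}
\chi_h(Z) = \chi_h(X) = \alpha , \qquad c_1^2(Z) = c_1^2(X) - k = \beta' - k = \beta ,
\end{eqnarray*}
so $Z$ realizes the target point $(\alpha,\beta)$.

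The input I would invoke is the (by now standard) geography of simply connected symplectic $4$-manifolds near the Bogomolov--Miyaoka--Yau line: for every $\varepsilon > 0$ there is a constant $C_\varepsilon > 0$ such that every lattice point $(\alpha', \beta')$ with $\alpha' \geq 2$, with $\beta'$ in the appropriate congruence class, and with $0 < \beta' \leq (9-\varepsilon)\alpha' - C_\varepsilon$, is realized as $(\chi_h, c_1^2)$ by \emph{infinitely many} pairwise non-diffeomorphic simply connected symplectic $4$-manifolds, which may moreover be taken to have pairwise distinct Seiberg--Witten invariants (this is where one uses constructions of simply connected symplectic manifolds of general type together with Fintushel--Stern knot surgery, arranged so as to stay symplectic). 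Taking $\varepsilon := \frac{3}{2}\delta$, one has
\begin{eqnarray*}
\beta' = \beta + k \leq \frac{3}{2}\beta + 1 \leq \frac{3}{2}\big((6-\delta)\alpha - d_\delta\big) + 1 = (9-\varepsilon)\,\alpha - \Big(\frac{3}{2}d_\delta - 1\Big) ,
\end{eqnarray*}
so it suffices to choose $d_\delta$ large enough that $\frac{3}{2}d_\delta - 1 \geq C_\varepsilon$ and $d_\delta \geq 6$; the latter forces $\alpha \geq 2$ from $\beta > 0$, hence $\alpha' = \alpha \geq 2$ and $b^+(X) = 2\alpha - 1 \geq 3$. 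If $\beta' = \beta + k$ happens to land in the wrong congruence class, I would replace $k$ by $k+1$ (which only strengthens $2k > \beta$ and changes $\beta'$ by one) or blow $X$ up once more; either adjustment is absorbed by enlarging $d_\delta$.

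It remains to extract the three properties of $Z = X \# k\,\overline{{\mathbb C}{P}^2}$. Since $X$ is symplectic (hence almost-complex) with $b^+(X) \geq 2$, Taubes gives $SW_X(\Gamma_X) \neq 0$; since also $c_1^2(X) = \beta' = 2\chi(X) + 3\tau(X) > 0$ and $k\,\overline{{\mathbb C}{P}^2}$ has $b^+ = 0$, Theorem~\ref{yamabe-pere} applies and yields
\begin{eqnarray*}
{\mathcal Y}(Z) = \bar{\lambda}(Z) \leq -4\pi \sqrt{2\, c_1^2(X)} < 0 ,
\end{eqnarray*}
which gives the negative Yamabe and Perelman invariants. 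Next, apply Theorem~\ref{ricci-ob-1} with its $N$ taken to be $k\,\overline{{\mathbb C}{P}^2}$, so $b_1(N) = 0$, $b^-(N) = k$: the hypothesis $12 b_1(N) + 3 b^-(N) = 3k > \beta + k = c_1^2(X)$ holds by the choice of $k$, so there is no quasi-non-singular solution of the normalized Ricci flow on $Z = X \# N$ in the sense of Definition~\ref{bs}, and hence no non-singular solution in the sense of Definition~\ref{non-sin}. (Corollary~\ref{non-sin-cor} could be used instead with a marginally smaller $k$, but it is stated only for non-singular solutions, so for the quasi-non-singular conclusion I use Theorem~\ref{ricci-ob-1}.) Finally, varying $X$ over the infinite family supplied by the geography input, the blow-up formula shows that the resulting $Z$'s have pairwise distinct Seiberg--Witten invariants, hence are pairwise non-diffeomorphic, while being pairwise homeomorphic by Freedman (they are simply connected, non-spin, with the same $\chi$ and $\tau$).

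The Ricci-flow part of this argument is a soft consequence of the obstruction theorems of Sections~\ref{ya}--\ref{obstruction}; the real work -- and the main obstacle -- is the symplectic geography input. One needs simply connected symplectic $4$-manifolds whose ratio $c_1^2/\chi_h$ can be pushed arbitrarily close to $9$, \emph{and} the infinite non-diffeomorphic multiplicity at essentially every lattice point of the admissible region, and one must track the constants carefully through the substitution $c_1^2(X) = c_1^2(Z) + k$ with $k \approx \frac{1}{2}c_1^2(Z)$ -- this is precisely what converts the slope-$9$ geography region for $X$ into the slope-$6$ region~(\ref{geo}) for $Z$.
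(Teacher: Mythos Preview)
Your proposal is correct and follows essentially the same route as the paper: invoke the Braungardt--Kotschick geography result (Theorem~4 of \cite{b-k}) to produce infinitely many simply connected symplectic source manifolds $X$ below the slope-$9$ line, blow up by enough copies of $\overline{{\mathbb C}{P}^2}$ to trigger the Ricci-flow obstruction, and read off the Yamabe/Perelman negativity from Theorem~\ref{yamabe-pere}. The only differences are cosmetic---you work backward from the target $(\alpha,\beta)$ with the explicit choice $k=\lfloor\beta/2\rfloor+1$ and invoke Theorem~\ref{ricci-ob-1} directly (rather than Corollary~\ref{non-sin-cor}) to secure the quasi-non-singular conclusion, which is in fact slightly more careful than the paper; your congruence-class hedge is unnecessary since the cited geography result covers every lattice point in the region, and the paper obtains the infinite families via logarithmic transformations rather than knot surgery, but either mechanism works.
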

\begin{proof}
Building upon symplectic sum construction due to Gompf \cite{g} and gluing formula of Seinerg-Witten invariants due to Morgan-Mrowka-Szab{\'{o}} \cite{mms} and Morgan-Szab{\'{o}}-Taubes \cite{mst}, a nice result on infinitely many pairwise non-diffeomorphic simply connected symplectic 4-manifolds is proved in \cite{b-k}. In particualr, infinitely many smooth structures are given by performing the logarithmic transformation in the sense of Kodaira. Theorem 4 of \cite{b-k} tells us that, for every $\delta>0$, there exists a constant $d_\delta>0$ satisfying the following property: every lattice point $(\alpha, \beta)$ satisfying
$$
0 < \beta \leq (9-\delta)\alpha-d_\delta
$$
is realiezed by $({\chi}_{h}, {c}^{2}_{1})$ of infinitely many pairwise non-diffeomorphic simply connected symplectic 4-manifolds. In particular, each symplectic 4-manifold $X$ satisfies ${c}^{2}_{1}(X)=\beta > 0$ and we are able to know that $b^+(X) \geq 2$ by the construction. By the bound (\ref{Ricci-sym}), we are able to conclude that, if a positive integer $k$ satisfies
\begin{eqnarray*}
k \geq \frac{1}{3}{c}^2_{1}(X) = \frac{\beta}{3}, 
\end{eqnarray*}
then there exists no quasi-non-singular solution to the normalized Ricci flow on the symplectic 4-manifold $N:=X \# k \overline{{\mathbb C}{P}^2}$. Moreover, $N:=X \# k \overline{{\mathbb C}{P}^2}$ is non-spin. These non-spin symplectic 4-manifolds actually cover the area (\ref{geo}) and here notice also that 
\begin{eqnarray*}
{\chi}_{h}(N) = {\chi}_{h}(X), \ c^{2}_{1}(N)=\beta-k. 
\end{eqnarray*}
Moreover, under the connected sum with $\overline{{\mathbb C}{P}^2}$, the infinitely many different smooth structures remain distinct as was already noticed in \cite{b-k}. Finally, since $X$ has non-trivial valued Seiberg-Witten invariant by a result of Taubes \cite{t-1}, the bound (\ref{one-ya}) tells us that
\begin{eqnarray*}
{\mathcal Y}(N) = \bar{\lambda}(N) \leq -4{\pi}\sqrt{2c^2_{1}(X)}=-4{\pi}\sqrt{2\beta}<0. 
\end{eqnarray*}
We therefore obtain the desired result. 
\end{proof}

\begin{rmk}
By using Corollary \ref{main-cor}, Proposition \ref{non-prop} and Theorem 4 of \cite{b-k}, it is not hard to prove the following general non-existence result on non-singular solution: for every $\delta >0$, there is a constant $d_{\delta}>0$ such that a non-spin 4-manifold $m{\mathbb C}{P}^2 \# n \overline{{\mathbb C}{P}^2}$ has infinitely many smooth structures with ${\mathcal Y}<0$ for which there exists no non-singular solution to the normalized Ricci flow for every large enough $m \not\equiv 0 \ (\bmod \ 8)$ and $n \geq (2+\delta)m + d_{\delta}$. The details are left to the interested reader. Under these conditions, the author does not know, however, whether or not $m{\mathbb C}{P}^2 \# n \overline{{\mathbb C}{P}^2}$ admits actually a smooth structure for which non-singular solutions of the normalized Ricci flow exist. 
\end{rmk}

On the other hand, there is a nice result of Cao \cite{c, c-c} concerning the existence of non-singular solutions to the normalized Ricci flow. We shall recall the following version of Cao's result which appears in \cite{c-c}. 
\begin{thm}[\cite{c, c-c}]\label{cao-K}
Let $M$ be a compact K{\"{a}}hler manifold with definite first Chern class ${c}_{1}(M)$. If ${c}_{1}(M)=0$, then for any initial K{\"{a}}hler metric $g_{0}$, the solution to the normalized Ricci flow exists for all time and converges to a Ricci-flat metric as $t \rightarrow \infty$. If ${c}_{1}(M) < 0$ and the initial metric $g_0$ is chosen to represent the first Chern class, then the solution to the normalized Ricci flow exists for all time and converges to an Einstein metric of negative scalar curvature as $t \rightarrow \infty$. If ${c}_{1}(M) > 0$ and the initial metric $g_0$ is chosen to represent the first Chern class, then the solution to the normalized Ricci flow exists for all time. 
\end{thm}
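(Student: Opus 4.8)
The statement is Cao's theorem on the K\"ahler--Ricci flow; I would prove it by reducing the flow to a scalar parabolic complex Monge--Amp\`ere equation and then running Yau-type a priori estimates. First I would normalize the set-up: for $c_1(M)=0$ fix an arbitrary K\"ahler class, for $c_1(M)<0$ take $[\omega_0]\in -2\pi c_1(M)$ after rescaling, and for $c_1(M)>0$ take $[\omega_0]\in 2\pi c_1(M)$. In each case the corresponding normalized flow, which after a time-reparametrization may be written $\partial_t\omega = -\mathrm{Ric}(\omega) + \varepsilon\,\omega$ with $\varepsilon = 0, -1, +1$ respectively, preserves the K\"ahler class (hence the volume, matching the normalization of Definition \ref{non-sin}). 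Writing $\omega(t) = \omega_0 + i\partial\bar\partial\varphi(t)$ and using $\mathrm{Ric}(\omega) - \mathrm{Ric}(\omega_0) = -i\partial\bar\partial\log\bigl(\omega^n/\omega_0^n\bigr)$, the flow becomes, up to a time-dependent additive constant, the scalar equation
\begin{eqnarray*}
\frac{\partial\varphi}{\partial t} = \log\frac{(\omega_0 + i\partial\bar\partial\varphi)^n}{\omega_0^n} + \varepsilon\varphi + F, \qquad \varphi(\cdot,0)=0,
\end{eqnarray*}
where $F$ is the fixed Ricci potential of $\omega_0$. Short-time existence is standard, since the linearization at $\varphi$ is $\Delta_{\omega(t)} + \varepsilon$, which is strictly parabolic as long as $\omega(t)>0$.

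The heart of the argument is to bound $\varphi$ in $C^\infty$ on every finite time interval; this continues the flow, so $T=\infty$. I would establish, in order: (i) a $C^0$ bound on $\varphi$, which is immediate from the maximum principle when $\varepsilon=-1$ thanks to the good sign of $-\varphi$, but which for $\varepsilon=0$ and $\varepsilon=+1$ is the delicate step and needs a Moser iteration argument in the spirit of Yau's zeroth-order estimate for the Calabi conjecture (respectively Cao's more elaborate argument in the Fano case); (ii) a bound on $\dot\varphi$, obtained by differentiating the equation in time, noting $\partial_t\dot\varphi = \Delta_{\omega(t)}\dot\varphi + \varepsilon\dot\varphi$, and applying the maximum principle; (iii) the key second-order estimate, a parabolic version of the Yau--Aubin $C^2$ estimate, controlling $\mathrm{tr}_{\omega_0}\omega$ from above by computing the evolution of $\log\mathrm{tr}_{\omega_0}\omega - A\varphi$ for $A$ large and using the maximum principle together with the lower bound on the holomorphic bisectional curvature of the fixed metric $\omega_0$ and the $C^0$ bound; combined with the Monge--Amp\`ere equation itself (which pins down $\omega^n$ once $\dot\varphi$ is controlled) this yields two-sided bounds on the eigenvalues of $\omega(t)$ relative to $\omega_0$, i.e. uniform parabolicity; (iv) higher regularity, via the parabolic Evans--Krylov theorem followed by parabolic Schauder bootstrapping, giving uniform $C^k$ bounds for every $k$ on compact time intervals. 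Hence the flow exists for all $t\in[0,\infty)$, which already covers the claim in the case $c_1(M)>0$.

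For convergence when $c_1(M)\le 0$: when $\varepsilon=-1$ the identity $\partial_t\dot\varphi = \Delta_{\omega(t)}\dot\varphi - \dot\varphi$ and the maximum principle give $\sup_M|\dot\varphi|(t)\le Ce^{-t}$, and since the estimates above are uniform in $t$ in this case, interpolation upgrades this to exponential convergence $\varphi(t)\to\varphi_\infty$ in $C^\infty$; the limit solves $\mathrm{Ric}(\omega_\infty)=-\omega_\infty$ with $\omega_\infty := \omega_0 + i\partial\bar\partial\varphi_\infty$, an Einstein metric of negative scalar curvature. When $\varepsilon=0$ one has instead $\partial_t\dot\varphi = \Delta_{\omega(t)}\dot\varphi$, so the oscillation of $\dot\varphi$ is non-increasing; combining this with a $t$-uniform $C^0$ bound on $\varphi$ (again via Yau's zeroth-order estimate, using the normalization $\int_M e^F\omega_0^n = \int_M\omega_0^n$) and the decay of a monotone quantity such as $\int_M\dot\varphi^2\,\omega(t)^n$ forces $\dot\varphi\to 0$, whence the uniform higher-order bounds give $\varphi(t)\to\varphi_\infty$ in $C^\infty$ with $\omega_\infty$ Ricci-flat in the prescribed class.

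I expect the main obstacle to be steps (i) and (iii): the a priori $C^0$ estimate, which is genuinely hard precisely when $c_1(M)\ge 0$, where no sign-favorable zeroth-order term is available, and the parabolic second-order estimate that renders the flow uniformly parabolic. These are the parabolic counterparts of the estimates Yau used to resolve the Calabi conjecture and carry essentially all the technical weight of the theorem; once they are in place, long-time existence and, in the non-positive cases, exponential or eventual convergence follow by fairly standard maximum-principle and bootstrapping arguments.
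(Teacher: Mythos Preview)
The paper does not supply its own proof of this theorem: it is quoted verbatim from Cao's work \cite{c, c-c} and used as a black box in the proof of Proposition~\ref{exis-prop}. So there is nothing to compare against on the paper's side.

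Your outline is essentially Cao's original argument and is correct in its architecture: reduce the normalized flow to a scalar parabolic complex Monge--Amp\`ere equation for the potential $\varphi$, obtain a priori estimates in the order $C^0 \to \dot\varphi \to C^2 \to C^{2,\alpha} \to C^\infty$, and deduce long-time existence; then, for $\varepsilon\le 0$, use the linear evolution of $\dot\varphi$ and monotonicity to extract convergence. One small inaccuracy worth flagging: for \emph{long-time existence} alone, the $C^0$ bound on $\varphi$ in the cases $\varepsilon=0$ and $\varepsilon=+1$ does not require anything as heavy as Moser iteration. Since $\dot\varphi$ satisfies $\partial_t\dot\varphi=\Delta_{\omega(t)}\dot\varphi+\varepsilon\dot\varphi$, the maximum principle gives $|\dot\varphi|\le C$ (resp.\ $Ce^t$), hence $|\varphi|$ grows at most linearly (resp.\ exponentially) in $t$, which is enough to close the $C^2$ estimate on every finite interval $[0,T]$ and continue the flow. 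The genuinely delicate zeroth-order work enters only when you want \emph{uniform-in-$t$} bounds to prove convergence in the $c_1=0$ case; you have this distinction implicitly in your convergence paragraph, but your step (i) conflates the two.
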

Notice that, in case where ${c}_{1}(M) = 0$ or ${c}_{1}(M) < 0$, the solution is actually non-singular in the sense of Definition \ref{non-sin}. Notice also that the affirmative answer of the Calabi conjecture due to Aubin \cite{a} and Yau \cite{yau, yau-1} tells us that K{\"{a}}hler-Einstein metrics exist in these cases. See also Section 4 in \cite{c-c}. We shall use Theorem \ref{cao-K} to prove
\begin{prop}\label{exis-prop}
For every positive integer $\ell > 0$, there are $\ell$-tuples of simply connected spin and non-spin algebraic surfaces with the following properties: 
\begin{itemize}
\item these are homeomorhic, but are pairwise non-diffeomorphic, 
\item for every fixed $\ell > 0$, the ratios $c^2_{1}/{\chi}_{h}$ of the $\ell$-tuples are dense in the interval $[4,8]$, 
\item each algebraic surface $M$ has negative Yamabe and Pelerman's $\bar{\lambda}$ invariant, i.e., ${\mathcal Y}(M)=\bar{\lambda}(M) <0$, 
\item on each algebraic surface $M$, there exists a non-singular solution to the the normalized Ricci flow in the sense of Definition \ref{non-sin}. Moreover the existence of the solution forces the strict FZZ inequality $2 \chi(M)> 3|\tau(M)|$ as a topological constraint.
\end{itemize}
\end{prop}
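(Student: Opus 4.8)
The plan is to extract the required $\ell$-tuples from the classical geography of simply connected minimal complex surfaces of general type, and then to obtain the dynamical, curvature, and invariant conclusions by combining Cao's theorem (Theorem \ref{cao-K}) with the Yamabe-invariant computations recorded in Sections \ref{ya} and \ref{monopoles}.

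First I would invoke the geography of simply connected minimal surfaces of general type with \emph{ample} canonical bundle (as developed via algebraic/symplectic sum and knot-surgery-type constructions; see, e.g., \cite{b-k} and the references therein): for every $\ell>0$ there are infinitely many lattice points $(\chi_h,c^2_1)$, with $\chi_h$ arbitrarily large and with ratios $c^2_1/\chi_h$ dense in $[4,8]$, at each of which at least $\ell$ pairwise non-diffeomorphic but mutually homeomorphic such surfaces exist; moreover the constructions can be arranged to produce both spin and non-spin examples. The homeomorphism type within each $\ell$-tuple is then pinned down by Freedman's classification, while non-diffeomorphism is detected by the Seiberg-Witten invariants (equivalently, by comparing the sets of monopole classes). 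Since we may always take $\chi_h\geq 2$, every surface $M$ in such an $\ell$-tuple has $b^+(M)=2\chi_h(M)-1\geq 3$, is minimal of general type with $K_M$ ample, and satisfies $c^2_1(M)=2\chi(M)+3\tau(M)>0$ together with $c_1(M)=-c_1(K_M)<0$.

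Next, fix such an $M$ and apply Theorem \ref{cao-K}: since $c_1(M)<0$, choosing the initial K{\"{a}}hler metric $g_0$ to represent the first Chern class as in that statement, the normalized Ricci flow with initial datum $g_0$ exists for all $t\in[0,\infty)$ and converges to the K{\"{a}}hler-Einstein metric of negative scalar curvature on $M$; as noted just after Theorem \ref{cao-K}, such a solution is non-singular in the sense of Definition \ref{non-sin}. For the Yamabe invariant, $M$ is a minimal K{\"{a}}hler surface of general type with $b^+\geq 2$, so by LeBrun's computation (cf. \cite{leb-4,leb-7}, which also enters the equality case of Theorem \ref{yamabe-pere}) one has ${\mathcal Y}(M)=-4\pi\sqrt{2\,c^2_1(M)}<0$ because $c^2_1(M)>0$, and $\bar{\lambda}(M)={\mathcal Y}(M)$ by Theorem \ref{AIL}. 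Finally, since ${\mathcal Y}(M)<0$ and $M$ carries a non-singular solution, Theorem \ref{bound-four} forces the strict FZZ inequality $2\chi(M)>3|\tau(M)|$, which is a purely topological statement; alternatively one checks it directly from $2\chi(M)+3\tau(M)=c^2_1(M)>0$ and the Bogomolov-Miyaoka-Yau inequality $c^2_1(M)\leq 3\chi(M)$, which together give $2\chi(M)-3\tau(M)=4\chi(M)-c^2_1(M)\geq\chi(M)>0$.

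Running this over the dense set of admissible ratios and over each $\ell$-tuple then yields all four asserted properties. I expect the first step to be the real obstacle: one must know that the geography of simply connected surfaces of general type is rich enough to supply, \emph{simultaneously}, density of $c^2_1/\chi_h$ in $[4,8]$, arbitrarily many homeomorphic but non-diffeomorphic representatives at (infinitely many of) the relevant lattice points, both parity types, and ampleness of the canonical bundle — the last being genuinely necessary, since Cao's theorem applies only when $c_1\leq 0$, and a blow-up of a surface of general type no longer satisfies $c_1\leq 0$. Granting that input, everything else follows immediately from Theorems \ref{cao-K}, \ref{yamabe-pere}, \ref{AIL}, and \ref{bound-four}.
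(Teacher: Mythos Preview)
Your proposal is correct and follows essentially the same route as the paper: invoke Corollary~1 of \cite{b-k} for the $\ell$-tuples of simply connected (spin and non-spin) surfaces of general type with ample canonical bundle and ratios $c_1^2/\chi_h$ dense in $[4,8]$, then apply Theorems~\ref{cao-K}, \ref{yamabe-pere}, \ref{AIL}, and \ref{bound-four} exactly as you do. One small inaccuracy: the surfaces in \cite{b-k} come from Salvetti's iterated branched covers of ${\mathbb C}{P}^2$, not from symplectic sums or knot surgery, and their pairwise non-diffeomorphism is distinguished by the \emph{divisibilities} of the canonical classes (detected via Seiberg--Witten basic classes) rather than by the full monopole-class sets; but this does not affect your argument. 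Your alternative direct verification of the strict FZZ inequality via Bogomolov--Miyaoka--Yau is a pleasant extra not in the paper.
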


\begin{proof}
Salvetti \cite{sal} proved that, for any $k > 0$, there exists a pair $(\chi_h, c^{2}_{1})$ such that for this pair one has at least $k$ homeomorphic algebraic surfaces with different divisibilities for their canonical classes by taking iterated branched covers of the projective plane. This construction is fairly generalized in \cite{b-k}. By Corollary 1 of \cite{b-k}, we know that, for every $\ell$, there are $\ell$-tuples of simply connected spin and non-spin algebraic surfaces with ample canonical bundles which are homeomorphic, but are pairwise non-diffeomorphic. Moreover, it is shown that, for every fixed $\ell$, the ratios $c^2_{1}/\chi_{h}$ of the $\ell$-tuples are dense in the interval $[4,8]$. Therefore, to prove this proposition, it is enough to prove the third and fourth statements above. We notice that one can see that each such an algebraic surface $M$ has $b^+(M) \geq 3$ by the construction. Now, the negativity of the Yamabe and Pelerman's $\bar{\lambda}$ invar!
 iant of the algebraic surface $M$ is a direct consequence of Theorem \ref{yamabe-pere}. In fact, the canonical bundle of each algebraic surface $M$ is ample and hence ${c}_{1}(M) < 0$. In particular, since $M$ is a minimal K{\"{a}}hler surface with ${b}^{+}_{2}(M) \geq 3$ and ${c}^2_{1}(M) > 0$, Theorem \ref{yamabe-pere} tells us that
\begin{eqnarray*}
{\mathcal Y}(M) = \bar{\lambda}(M) = -4{\pi}\sqrt{2c^2_{1}(M)} < 0. 
\end{eqnarray*}
Hence the third statement follows. \par
The fourth statement follows from Theorem \ref{cao-K} above because each algebraic surface $M$ has ample canonical bundle and hence ${c}_{1}(M) < 0$. We therefore conclude that, for the initial metric $g_0$ which is chosen to represent the first Chern class, there always exists a non-singular solution to the normalized Ricci flow and it converges to an Einstein metric of negative scalar curvature as $t \rightarrow \infty$. On the other hand, notice that the non-singular solution is particularly a quasi-non-singular solution in the sense of Definition \ref{bs}. Theorem \ref{bound-four} and the fact that $M$ has negative Yamabe invariant imply that $M$ must satisfy the strict FZZ inequality $2 \chi(M)> 3|\tau(M)|$ as a topological constraint. 
\end{proof}

Propositions \ref{non-prop} and \ref{exis-prop} enable us to prove the main result of this article, i.e., Theorem \ref{main-A} stated in Introduction:

\begin{thm}
For every natural number $\ell$, there exist a simply connected topological non-spin 4-manifold $X_{\ell}$ satisfying the following properties:
\begin{itemize}
\item $X_{\ell}$ admits at least $\ell$ different smooth structures $M^i_{\ell}$ with ${\mathcal Y}<0$ and for which there exist non-singular solutions to the the normalized Ricci flow in the sense of Definition \ref{non-sin}. Moreover, the existence of the solutions forces the strict FZZ inequality $2 \chi > 3|\tau|$ as a topological constraint, 
\item $X_{\ell}$ also admits infinitely many different smooth structures  $N^j_{\ell}$ with ${\mathcal Y}<0$ and for which there exists no quasi-non-singular solution to the normalized Ricci flow in the sense of Definition \ref{bs}. In particular, there exists no non-singular solution to the the normalized Ricci flow in the sense of Definition \ref{non-sin}.
\end{itemize} 
\end{thm}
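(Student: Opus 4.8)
The plan is to splice together the non-existence result of Proposition~\ref{non-prop} with the existence result of Proposition~\ref{exis-prop}, pairing them over a single topological $4$-manifold by means of Freedman's classification \cite{free}. Fix a natural number $\ell$, fix $\delta := 1$, and let $d_{1}>0$ be the constant that Proposition~\ref{non-prop} associates to this $\delta$.

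First I would invoke Proposition~\ref{exis-prop}. For the given $\ell$ it produces $\ell$-tuples of simply connected non-spin algebraic surfaces with ample canonical bundle that are mutually homeomorphic, pairwise non-diffeomorphic, have ${\mathcal Y}=\bar{\lambda}<0$, and carry non-singular solutions of the normalized Ricci flow whose existence forces the strict FZZ inequality $2\chi>3|\tau|$; moreover the ratios $c^{2}_{1}/{\chi}_{h}$ realized by such $\ell$-tuples are dense in $[4,8]$. Density in $[4,8]$ in particular yields infinitely many distinct values of this ratio in the subinterval $(4,\frac{9}{2})$, hence infinitely many corresponding lattice points $({\chi}_{h},c^{2}_{1})$; since there are only finitely many lattice points with ${\chi}_{h}$ bounded and ratio in $(4,\frac{9}{2})$, the quantity ${\chi}_{h}$ is unbounded among them. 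I therefore choose an $\ell$-tuple $M^{1}_{\ell},\dots,M^{\ell}_{\ell}$ whose common invariants $(\alpha_{0},\beta_{0}):=({\chi}_{h}(M^{1}_{\ell}),c^{2}_{1}(M^{1}_{\ell}))$ satisfy $4\le \beta_{0}/\alpha_{0}<\frac{9}{2}$ and $\alpha_{0}\ge 2d_{1}$. Let $X_{\ell}$ be the underlying closed simply connected non-spin topological $4$-manifold common to all the $M^{i}_{\ell}$. The surfaces $M^{1}_{\ell},\dots,M^{\ell}_{\ell}$ already realize the first bullet of the theorem.

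Next I would feed the lattice point $(\alpha_{0},\beta_{0})$ into Proposition~\ref{non-prop}. From $\beta_{0}<\frac{9}{2}\alpha_{0}=5\alpha_{0}-\frac{1}{2}\alpha_{0}\le 5\alpha_{0}-d_{1}=(6-\delta)\alpha_{0}-d_{\delta}$ we see that $(\alpha_{0},\beta_{0})$ lies in the admissible region, so it is realized by $({\chi}_{h},c^{2}_{1})$ of infinitely many pairwise non-diffeomorphic simply connected non-spin symplectic $4$-manifolds $N^{1}_{\ell},N^{2}_{\ell},\dots$, each with ${\mathcal Y}=\bar{\lambda}<0$ and each admitting no quasi-non-singular solution, hence no non-singular solution, of the normalized Ricci flow. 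Since $({\chi}_{h},c^{2}_{1})$ determines $(\chi,\tau)$ — one has $\tau=c^{2}_{1}-8{\chi}_{h}$ and $\chi=12{\chi}_{h}-c^{2}_{1}$ — every $N^{j}_{\ell}$ has the same Euler characteristic, signature, and (odd) intersection form type as $M^{1}_{\ell}$; being simply connected, it is homeomorphic to $X_{\ell}$ by Freedman's theorem \cite{free}. Thus the $N^{j}_{\ell}$ are pairwise non-diffeomorphic smooth structures on $X_{\ell}$, and none of them is diffeomorphic to any $M^{i}_{\ell}$, since a diffeomorphism would carry the non-singular solution on $M^{i}_{\ell}$ to one on $N^{j}_{\ell}$, which is impossible. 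This is the second bullet, and the theorem follows. (Since $X_{\ell}$ is simply connected with odd intersection form, the classification of indefinite odd unimodular forms together with Freedman's theorem shows that $X_{\ell}$ is homeomorphic to $p{\mathbb C}{P}^{2}\#q\overline{{\mathbb C}{P}^{2}}$ for suitable $p,q$ depending on $\ell$.)

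The crux is the compatibility between the previous two paragraphs: one needs a single lattice point $(\alpha_{0},\beta_{0})$ lying simultaneously in the \emph{existence} regime of Proposition~\ref{exis-prop}, where Cao's theorem~\ref{cao-K} supplies non-singular solutions on algebraic surfaces of negative first Chern class, and in the \emph{non-existence} regime $\beta\le(6-\delta)\alpha-d_{\delta}$ of Proposition~\ref{non-prop}. This is precisely why the slope-$4$ end of the interval $[4,8]$ in the algebraic-surface construction of \cite{b-k} is essential — it sits well below the slope-$6$ barrier — together with the freedom to push ${\chi}_{h}$ past the affine threshold $d_{\delta}$; once such a point is secured, the remainder is routine bookkeeping with $\chi$, $\tau$, and the spin condition so that Freedman's classification applies.
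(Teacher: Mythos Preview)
Your proof is correct and follows essentially the same route as the paper's: match an $\ell$-tuple from Proposition~\ref{exis-prop} with the infinite family from Proposition~\ref{non-prop} at a common lattice point $(\chi_h,c_1^2)$, then invoke Freedman's classification. The only difference is cosmetic---you make the matching quantitative by fixing $\delta=1$, restricting to ratios in $(4,\tfrac{9}{2})$, and pushing $\chi_h$ past $2d_1$, whereas the paper simply appeals to density and the fact that the non-existence region has slope below $6$ without writing out these thresholds.
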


\begin{proof}
Proposition \ref{exis-prop} tells us that, for every positive integer $\ell > 0$, we are always able to find $\ell$-tuples $M^i_{\ell}$ of simply connected non-spin algebraic surfaces of general type and these are homeomorhic, but are pairwise non-diffeomorphic. And the ratios $c^2_{1}/\chi_h$ of $M^i_{\ell}$ are dense in the interval $[4,8]$ for every fixed $\ell > 0$. Moreover, Proposition \ref{exis-prop} tells us that each of $M^i_{\ell}$ has ${\mathcal Y}<0$ and, on each of $M^i_{\ell}$, there exists a non-singular solution to the the normalized Ricci flow and the existence of the solution forces the strict FZZ inequality $2 \chi> 3|\tau|$ as a topological constraint. \par
On the other hand, Proposition \ref{non-prop} tells us that any pair $(\alpha, \beta)$ in the area (\ref{geo}) can be realized by $(\chi_h, {c}^{2}_{1})$ of infinitely many pairwise non-diffeomorphic simply connected non-spin symplectic 4-manifolds with ${\mathcal Y}<0$ and on each of which there exists no quasi-non-singular solution of the normalized Ricci flow. Notice that the ratios $c^2_{1}/\chi_h$ of these non-spin symplectic 4-manifolds are not more than 6, here see again the area (\ref{geo}). By this fact and the density of the ratios $c^2_{1}/\chi_h$ of $M^i_{\ell}$ in the interval $[4,8]$, we are able to find infinitely many pairwise non-diffeomorphic simply connected non-spin symplectic 4-manifolds $N^i_{\ell}$ such that ${\mathcal Y}<0$ and, on each of $N^i_{\ell}$, there exists no quasi-non-singular solution of the normalized Ricci flow, and moreover, $M^i_{\ell}$ and $N^i_{\ell}$ are both non-spin and have the same $(\chi_h, {c}^{2}_{1})$. Freedman's classification \cite{free} implies that they must be homeomorphic. However, each of $M^i_{\ell}$ is not diffeomorphic to any $N^i_{\ell}$ because, on each of $M^i_{\ell}$, a non-singular solution exists and, on the other hand, no non-singular solution exists on each of $N^i_{\ell}$. Therefore, we are able to conclude that, for every natural number $\ell$, there exists a simply connected topological non-spin 4-manifold $X_{\ell}$ satisfying the desired properties. 
\end{proof}

\section{Concluding Remarks}\label{remark}

In this article, we have seen that the existence or non-existence of non-singular solutions to the normalized Ricci flow depends on the diffeotype of a 4-manifold and it is not determined by homeotype alone. In particular, we considered distinct smooth structures on simply connected topological non-spin 4-manifolds $p{\mathbb C}{P}^2 \# q \overline{{\mathbb C}{P}^2}$ in Theorem \ref{main-A}. Freedman's classification \cite{free} tells us that, up to homeomorphism, the connected sums $j{\mathbb C}{P}^2 \# k \overline{{\mathbb C}{P}^2}$ provides us with a complete list of the simply connected non-spin 4-manifolds. In light of this fact, it will be tempting to ask whether or not the phenomenon like Theorem \ref{main-A} is a general feature of the Ricci flow on simply connected non-spin 4-manifolds admitting exotic smooth structures. However, there are quite many difficulties to prove such a result and hence this is a completely open problem. \par
On the other hand, in case of topological spin 4-manifolds, the situation on homeotypes is a bit more unsettled. But, the connected sum $m(K3)\#n(S^2 \times S^2)$ and thier orientation-reversed version, together with 4-sphere $S^4$ at least exhaust all the simply connected homeotypes satifying $\chi \geq \frac{11}{8}|\tau|+2$. The $11/8$-conjecture asserts that this constraint is indeed satisfied automatically and hence that the above list of spin homeotypes is complete. Notice that there is a storng partial result due to Furuta \cite{f-1} which asserts that $\chi \geq \frac{10}{8}|\tau|+2$ holds. It will be also tempting to ask whether or not a result like Theorem \ref{main-A} still holds for the Ricci flow on simply connected spin 4-manifolds admitting exotic smooth structures. However, the present method cannot prove an abundance theorem like Theorem \ref{main-A} in spin case because the present method cannot prove a result like Proposition \ref{non-prop} in spin case. Hence the situation is quite different from the case of non-spin. \par
Finally, it will be also interesting to ask whether or not a phenomenon like Theorem \ref{main-A} still occurs in {\it non} simply connected case. We hope to return this interesting subject in further research. 


\vfill

{\footnotesize 
\noindent
{Masashi Ishida, \\
{Department of Mathematics,  
Sophia University, \\ 7-1 Kioi-Cho, Chiyoda-Ku, 
 Tokyo 102-8554, Japan }\\
{\sc e-mail}: masashi@math.sunysb.edu}

\end{document}